\theoremstyle{change} 
\newtheorem{theorem}{Theorem}[section] 
\newtheorem{lemma}[theorem]{Lemma} 
\newtheorem{proposition}[theorem]{Proposition}
\newtheorem{corollary}[theorem]{Corollary}
\newtheorem{remark}[theorem]{Remark}
\newtheorem{example}[theorem]{Example}
\newtheorem{nothing}[theorem]{} 
\newenvironment{proof}{\noindent{\bf Proof}\ }{\qed\bigskip}
\renewcommand{\le}{\leqslant}
\renewcommand{\marginpar}[1]{}
\newcommand{\alphabar}{\overline{\alpha}}
\newcommand{\Aut}{\mathrm{Aut}}
\newcommand{\calA}{\mathcal{A}}
\newcommand{\calB}{\mathcal{B}}
\newcommand{\calE}{\mathcal{E}}
\newcommand{\calF}{\mathcal{F}}
\newcommand{\calFbar}{\overline{\calF}}
\newcommand{\CC}{\mathbb{C}}
\newcommand{\cdotG}{\cdot_G}
\newcommand{\charac}{\mathrm{char}}
\newcommand{\defl}{\mathrm{def}}
\newcommand{\End}{\mathrm{End}}
\newcommand{\Fbar}{\overline{F}}
\newcommand{\FF}{\mathbb{F}}
\newcommand{\GL}{\mathrm{GL}}
\newcommand{\Hom}{\mathrm{Hom}}
\newcommand{\Hombar}{\overline{\Hom}}
\newcommand{\id}{\mathrm{id}}
\newcommand{\im}{\mathrm{im}}
\newcommand{\ind}{\mathrm{ind}}
\newcommand{\infl}{\mathrm{inf}}
\newcommand{\Inj}{\mathrm{Inj}}
\newcommand{\Injbar}{\overline{\Inj}}
\newcommand{\Inn}{\mathrm{Inn}}
\newcommand{\Irr}{\mathrm{Irr}}
\newcommand{\isom}{\mathrm{iso}}
\newcommand{\Lbar}{\overline{L}}
\newcommand{\lexp}[2]{\setbox0=\hbox{$#2$} \setbox1=\vbox to
                 \ht0{}\,\box1^{#1}\!#2}
\newcommand{\liso}{\buildrel\sim\over\longrightarrow}
\newcommand{\myiso}{\buildrel\sim\over\to}
\newcommand{\omegabar}{\overline{\omega}}
\newcommand{\qed}{\nobreak\hfill
                   \vbox{\hrule\hbox{\vrule\hbox to 5pt
                   {\vbox to 8pt{\vfil}\hfil}\vrule}\hrule}}
\newcommand{\QQ}{\mathbb{Q}}
\newcommand{\res}{\mathrm{res}}
\newcommand{\rk}{\mathrm{rk}}
\newcommand{\Out}{\mathrm{Out}}
\newcommand{\stab}{\mathrm{stab}}
\newcommand{\Sigmahat}{\widehat{\Sigma}}
\newcommand{\Sigmatilde}{\widetilde{\Sigma}}
\newcommand{\sigmatilde}{\widetilde{\sigma}}
\newcommand{\St}{\mathrm{St}}
\newcommand{\Thetahat}{\widehat{\Theta}}
\newcommand{\Thetatilde}{\widetilde{\Theta}}
\newcommand{\tr}{\mathrm{tr}}
\newcommand{\trl}{\lhd}
\newcommand{\Utilde}{\widetilde{U}}
\newcommand{\varepsilonhat}{\widehat{\varepsilon}}
\newcommand{\vbar}{\overline{v}}
\newcommand{\Vbar}{\overline{V}}
\newcommand{\Xihat}{\widehat{\Xi}}
\newcommand{\Xitilde}{\widetilde{\Xi}}
\newcommand{\ZZ}{\mathbb{Z}}
\title{Central idempotents of the bifree and left-free double Burnside ring\footnote{{\bf MR Subject Classification:} 19A22, 20C15. 
{\bf Keywords:} Double Burnside ring, ghost ring, primitive central idempotents, fusion systems.}}
\author{\small Robert Boltje\\
  \small Department of Mathematics\\
  \small University of California\\
  \small Santa Cruz, CA 95064\\
  \small U.S.A.\\
  \small boltje@ucsc.edu
  \and
  \small Burkhard K\"ulshammer\\
  \small Mathematisches Institut\\
  \small Friedrich-Schiller-Universit\"at Jena\\
  \small D-07737 Jena\\
  \small Germany\\
  \small kuelshammer@uni-jena.de}
\date{December 2, 2013}
\begin{document}

\sloppy

\maketitle


\begin{abstract}
\noindent 
We determine the blocks, i.e., the primitive central idempotents, of the bifree double Burnside ring and the left-free double Burnside ring, as well as the primitive central idempotents of the algebras arising from scalar extension to $\QQ$.
\end{abstract}


\section{Introduction}\label{sec intro}
The aim of this paper is to find the primitive central idempotents of the subrings $B^\Delta(G,G)$ and $B^{\trl}(G,G)$ of the double Burnside ring $B(G,G)$ of a finite group $G$, as well as the primitive central idempotents of the algebras $\QQ B^\Delta(G,G)$ and $\QQ B^{\trl}(G,G)$. Recall that the double Burnside ring $B(G,G)$ is the Grothendieck group of the category of finite $(G,G)$-bisets with respect to disjoint unions, equipped with the multiplication induced by tensoring $(G,G)$-bisets over $G$. The subrings $B^\Delta(G,G)\subseteq B^{\trl}(G,G)\subseteq B(G,G)$ arise from considering bifree $(G,G)$-bisets and left-free $(G,G)$-bisets. These classes of bisets are of particular interest, since, via the theory of biset functors introduced by Bouc, they are related to globally defined Mackey functors and to globally defined Mackey functors with inflation (or deflation) maps as extra structures. The bifree subring of a $p$-group $S$ is also related to fusion systems on $S$, cf.~\cite{RS} and \cite{BD},  and the left-free subring is related to stable homotopy classes of selfmaps of the $p$-completion of the classifying space $BG$ of $G$, cf.~\cite{MP} and \cite{AKO}. For generalities on double Burnside rings and biset functors we refer the reader to Bouc's book \cite{BoucSLN}. 

\smallskip
For the bifree double Burnside ring we derive the following result. Let $\Sigmahat_G$ denote a set of representatives of the isomorphism classes of subgroups of $G$.

\begin{theorem}\label{thm bifree intro}
The primitive central idempotents $e_U$ of $B^\Delta(G,G)$ are parametrized by the elements $U\in\Sigmahat_G$ such that $U$ is a perfect group. The primitive central idempotents $e_{(U,\chi)}$ of $\QQ B^\Delta(G,G)$ are parametrized by a set $\calE_G$ of pairs $(U,\chi)$ with $U\in\Sigmahat_G$ and certain irreducible characters $\chi\in\Irr_{\QQ}(\Out(U))$, i.e., characters of irreducible modules of the outer automorphism group of $U$ over $\QQ$. More precisely, in $\QQ B^\Delta(G,G)$, one has $e_U=\sum_{\chi} e_{(V,\chi)}$, for each perfect $U\in\Sigmahat_G$, where $V$ is in $\Sigmahat_G$ such that $V^{(\infty)}\cong U$ and $\chi$ is in $\Irr_{\QQ}(\Out(V))$ such that $(V,\chi)\in\calE_G$.
\end{theorem}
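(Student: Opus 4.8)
The plan is to realize $B^\Delta(G,G)$ inside a combinatorially transparent ``ghost ring'' and transport the idempotent problem there. Every transitive bifree $(G,G)$-biset has the form $[(G\times G)/\Delta(P,\phi,Q)]$ for subgroups $P,Q\le G$ and an isomorphism $\phi\colon Q\to P$; call the isomorphism class of $Q$ (equivalently $P$) its \emph{type}. The Mackey-type composition formula for bifree bisets shows the type is submultiplicative, so $B^\Delta(G,G)$ is filtered by the types $U\in\Sigmahat_G$. Since $(P,c_u,P)=(P,\id,P)$ in $B^\Delta(G,G)$ for $u\in P$ (the two transitive bisets are $(G\times G)$-conjugate via $(u,1)$), inner automorphisms act trivially, and I expect a routine if somewhat involved bookkeeping to identify the associated graded at type $U$ with a matrix algebra over $\ZZ\Out(U)$, concretely of the form
\[
\calA_U\;\cong\;\End_{\ZZ\Out(U)}\Bigl(\textstyle\bigoplus_{i=1}^{r_U}\ZZ[\Out(U)/\Out_G(P_i)]\Bigr),
\]
where $P_1,\dots,P_{r_U}$ represent the $G$-conjugacy classes of subgroups of $G$ isomorphic to $U$ and $\Out_G(P_i)=N_G(P_i)/P_iC_G(P_i)$. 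Packaging these, I would use the resulting mark homomorphism $\rho\colon B^\Delta(G,G)\to\widehat B^\Delta(G,G):=\prod_{U\in\Sigmahat_G}\calA_U$, which should be an injective ring homomorphism with finite cokernel — the bifree analogue of the table of marks, cf.~\cite{BD}. After this, finding the primitive central idempotents of $B^\Delta(G,G)$ amounts to finding the central idempotents of $\prod_U\calA_U$ that lie in the image of $\rho$.

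\textbf{The rational case.} Since $\rho$ has finite cokernel, $\QQ\rho$ identifies $\QQ B^\Delta(G,G)$ with $\prod_{U}\QQ\calA_U$, and each $\QQ\calA_U=\End_{\QQ\Out(U)}\bigl(\bigoplus_i\QQ[\Out(U)/\Out_G(P_i)]\bigr)$ is the endomorphism algebra of a permutation $\QQ\Out(U)$-module. Its Wedderburn blocks are then indexed by the $\chi\in\Irr_{\QQ}(\Out(U))$ occurring as constituents of that module, i.e.\ with $\langle\chi|_{\Out_G(P_i)},\mathbf 1\rangle\ne0$ for some $i$; this will be the set $\calE_G$ appearing in the statement. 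Summing over $U$, the primitive central idempotents $e_{(U,\chi)}$ of $\QQ B^\Delta(G,G)$ are exactly those indexed by $\calE_G$, with $Z(\QQ B^\Delta(G,G))=\bigoplus_{(U,\chi)\in\calE_G}\QQ e_{(U,\chi)}$; nonvanishing of $e_{(U,\chi)}$ forces $U\in\Sigmahat_G$, which is why the parameter set is $\calE_G$ rather than a set of pairs ranging over all finite groups.

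\textbf{The integral case and the formula.} By injectivity of $\rho$, a central idempotent of $B^\Delta(G,G)$ is the same datum as a central idempotent of $\prod_U\calA_U$ lying in $\rho(B^\Delta(G,G))$. The first thing I would prove is that each $\calA_U$ is \emph{connected} (no idempotents besides $0,1$): it is the endomorphism ring over $\ZZ$ of a permutation module, and integral group rings of finite groups — and, more generally, endomorphism rings of their permutation modules — carry no nontrivial central idempotents. Granting this, a central idempotent of $\prod_U\calA_U$ must have the form $\mathbf 1_T=(f_U)_U$ with $f_U=1_{\calA_U}$ for $U\in T$ and $f_U=0$ otherwise, $T\subseteq\Sigmahat_G$, and the primitive central idempotents of $B^\Delta(G,G)$ correspond to the minimal nonempty $T$ with $\mathbf 1_T\in\rho(B^\Delta(G,G))$. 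Next I would describe $\rho(B^\Delta(G,G))$ inside $\prod_U\calA_U$ by congruences — the bifree analogue of the congruences defining the Burnside ring inside its table of marks — which link $\calA_{V}$ to $\calA_{V'}$ whenever $V'\trianglelefteq V$ with $V/V'$ cyclic of prime order. Exactly as in Dress's determination of the connected components of $\operatorname{Spec}B(G)$, the equivalence relation on $\Sigmahat_G$ generated by these links carries each $V$ to its solvable residual $V^{(\infty)}$ (which is perfect), while non-isomorphic perfect groups stay in distinct classes since the solvable residual is unique up to isomorphism. Hence the admissible minimal sets are $T_U=\{V\in\Sigmahat_G:V^{(\infty)}\cong U\}$ with $U$ running over the \emph{perfect} groups in $\Sigmahat_G$ (each is its own solvable residual, so $U\in T_U$, and every perfect $U\in\Sigmahat_G$ arises). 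Putting $e_U:=\mathbf 1_{T_U}$ and expanding it in $\prod_U\QQ\calA_U$ gives
\[
e_U\;=\;\sum_{V\in T_U}1_{\QQ\calA_V}\;=\;\sum_{\substack{(V,\chi)\in\calE_G\\ V^{(\infty)}\cong U}}e_{(V,\chi)},
\]
the displayed identity.

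\textbf{Main obstacle.} The crux is everything in the last paragraph beyond the formal bookkeeping: (i) proving that each $\calA_U$ is connected over $\ZZ$ — this is what rules out any splitting in the ``$\chi$-direction'' inside $B^\Delta(G,G)$; and (ii) obtaining a clean explicit description of $\rho(B^\Delta(G,G))$ by congruences and then the Dress-style combinatorial analysis of which subsets $T\subseteq\Sigmahat_G$ are permitted, in particular verifying that the \emph{only} mergings forced across types are the solvable-residual ones, so that distinct perfect groups genuinely give distinct blocks. A lesser, largely routine issue is pinning down $\calE_G$ exactly — which $\chi\in\Irr_{\QQ}(\Out(U))$ survive — which should fall out of the permutation-module bookkeeping together with the precise shape of the graded piece $\calA_U$.
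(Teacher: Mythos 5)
Your strategy is essentially the paper's: the ghost ring you describe is the map $\sigma_G\colon B^\Delta(G,G)\to\prod_{U\in\Sigmahat_G}\End_{\ZZ\Out(U)}(\ZZ\Injbar(U,G))$ from \cite{BD}, injective with image of finite index, and your identification of the rational blocks with the irreducible constituents of the permutation $\QQ\Out(U)$-modules $\QQ\Injbar(U,G)$ is exactly Remark~\ref{rem bifree idempotents over field}. Two comments on the obstacles you flag. (i) The connectedness of each factor $\calA_U=\End_{\ZZ\Out(U)}(\ZZ\Injbar(U,G))$ does \emph{not} follow formally from the classical connectedness of $\ZZ\Out(U)$; it is Proposition~\ref{prop idempotents of Hecke algebras}, whose proof needs two genuine inputs: first, Green's theorem forces the $\ZZ$-rank of any nonzero direct summand of a transitive permutation module $\ZZ Y$ to be divisible by $|Y|$, so the orbit projections are primitive idempotents of $\calA_U$ (Lemma~\ref{lem trans perm module}); second, distinct orbits are linked through their common trivial constituent over $\QQ$, so no proper nonempty subset of orbit projections can sum to a central idempotent. (ii) You do not need, and the paper never establishes, a congruence description of the image of the ghost map. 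Once a central idempotent of $B^\Delta(G,G)$ is known to map to an all-or-nothing tuple $(\delta_{U\in\Xihat}\cdot\id)_{U}$ for some isomorphism-closed $\Xi\subseteq\Sigma_G$, Proposition~\ref{prop image of e_U} identifies that tuple with $\sigma_G(\Delta(\sum_{U\in\Xitilde}e_U))$, so the idempotent lies in $B^\Delta(G,G)\cap\Delta(\QQ B(G))=\Delta(B(G))$; which $\Xi$ are admissible is then read off from Dress's description of the idempotents of the ordinary Burnside ring together with closure of $\Xi$ under isomorphism, exactly the solvable-residual merging you predict. With (i) supplied and (ii) replaced by this shortcut, your outline matches the proof of Theorem~\ref{thm RBDelta}.
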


Here, $V^{(\infty)}$ denotes the smallest normal subgroup of $V$ with solvable quotient. A precise definition of $\calE_G$ can be found in Remark~\ref{rem bifree idempotents over field}. Theorem~\ref{thm bifree intro} is a special case of Theorem~\ref{thm RBDelta}, which determines the primitive central idempotents of the ring $RB^\Delta(G,G)$, i.e., the scalar extension of $B^\Delta(G,G)$ from $\ZZ$ to $R$, for certain integral domains $R$, and of Remark~\ref{rem bifree idempotents over field}, in which more general fields $K$ are considered in place of $\QQ$.

\smallskip
For the left-free double Burnside ring we have the following result:

\begin{theorem}\label{thm left-free intro}
The center of $B^{\trl}(G,G)$ is connected, i.e., $0$ and $1$ are the only central idempotents of $B^{\trl}(G,G)$. The primitive central idempotents of $\QQ B^{\trl}(G,G)$ are contained in $\QQ B^\Delta(G,G)$. They are the sums $\sum_{(U,\chi)\in \calE} e_{(U,\chi)}$, where $\calE\subseteq\calE_G$ is an equivalence class of $\calE_G$ with respect to the transitive and symmetric closure of the relation defined on two elements $(U,\chi)$, $(U',\chi')\in\calE_G$ by
\begin{equation}\label{eqn relation intro}
  e_{(U,\chi)} \QQ B^{\trl}(G,G) e_{(U',\chi')}\neq \{0\}\,.
\end{equation}
\end{theorem}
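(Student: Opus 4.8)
The plan is to work with the already-computed primitive central idempotents of $\QQ B^\Delta(G,G)$ and to analyse $\QQ B^{\trl}(G,G)$ by a Peirce decomposition along them. Set $A:=\QQ B^\Delta(G,G)\subseteq B:=\QQ B^{\trl}(G,G)$; these share the identity $[\Delta(G)]$, so $A$ is a unital subring of $B$. By Theorem~\ref{thm RBDelta} (case $R=\QQ$) and Remark~\ref{rem bifree idempotents over field}, $1=\sum_{(U,\chi)\in\calE_G}e_{(U,\chi)}$ is an orthogonal decomposition into the primitive central idempotents of the semisimple algebra $A$, with each block $e_{(U,\chi)}A$ simple. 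Regarding the $e_{(U,\chi)}$ as orthogonal (not necessarily central) idempotents of $B$ gives $B=\bigoplus_{(U,\chi),(U',\chi')}e_{(U,\chi)}Be_{(U',\chi')}$. By construction, the relation~\eqref{eqn relation intro} records exactly which Peirce components are nonzero; hence if $(U,\chi)$ and $(U',\chi')$ lie in different classes $\calE\neq\calE'$ of its transitive and symmetric closure, then both $e_{(U,\chi)}Be_{(U',\chi')}$ and $e_{(U',\chi')}Be_{(U,\chi)}$ vanish. Therefore $e_\calE:=\sum_{(U,\chi)\in\calE}e_{(U,\chi)}$ satisfies $Be_\calE=e_\calE Be_\calE=e_\calE B$, so each $e_\calE$ is a central idempotent of $B$ lying in $A=\QQ B^\Delta(G,G)$, and $1=\sum_\calE e_\calE$ is an orthogonal decomposition. (The asymmetry of~\eqref{eqn relation intro}, traceable to the fact that the opposite-biset anti-involution does not preserve left-freeness, is precisely why the symmetric closure is needed.)

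The substantive point is that each $e_\calE$ is \emph{primitive} as a central idempotent of $B$, i.e.\ that the block $e_\calE Be_\calE$ is an indecomposable ring. The crux is a lemma saying that genuine deflations contribute nothing on the diagonal: for every $(U,\chi)\in\calE_G$ the inclusion induces an isomorphism $e_{(U,\chi)}Ae_{(U,\chi)}\myiso e_{(U,\chi)}Be_{(U,\chi)}$, so the latter is a simple algebra. Heuristically this holds because a left-free biset splits into a bifree part and a genuinely deflating part, and the deflating part strictly lowers the relevant subgroup invariant, hence becomes orthogonal to $e_{(U,\chi)}$ on the other side; making this precise requires the ghost-ring description of $\QQ B^{\trl}(G,G)$ and of the idempotents $e_{(U,\chi)}$. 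Granting the lemma, let $z$ be a central idempotent of $R:=e_\calE Be_\calE$. For each $(U,\chi)\in\calE$, $ze_{(U,\chi)}$ is a central idempotent of the simple ring $e_{(U,\chi)}Be_{(U,\chi)}$, hence lies in $\{0,e_{(U,\chi)}\}$. If $e_{(U,\chi)}Be_{(U',\chi')}\neq\{0\}$ (with $(U,\chi),(U',\chi')\in\calE$), then for $0\neq b=e_{(U,\chi)}be_{(U',\chi')}$ centrality of $z$ in $R$ gives $zb=bz$; evaluating both sides via $b=e_{(U,\chi)}b=be_{(U',\chi')}$ forces $ze_{(U,\chi)}=e_{(U,\chi)}\Leftrightarrow ze_{(U',\chi')}=e_{(U',\chi')}$. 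Since $\calE$ is a connected component of the underlying graph, $ze_{(U,\chi)}$ is the same for all $(U,\chi)\in\calE$, so $z\in\{0,e_\calE\}$. This proves that the $e_\calE$ are exactly the primitive central idempotents of $\QQ B^{\trl}(G,G)$ and that they lie in $\QQ B^\Delta(G,G)$.

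For the connectedness of the center of $B^{\trl}(G,G)$ over $\ZZ$, let $e$ be a central idempotent of $B^{\trl}(G,G)$. Then $e$ is a central idempotent of $B$, so by the above $e=\sum_{\calE\in S}e_\calE$ for some set $S$ of equivalence classes; moreover $e\in \QQ B^\Delta(G,G)\cap B^{\trl}(G,G)=B^\Delta(G,G)$, since $\QQ B^\Delta(G,G)$ and $B^{\trl}(G,G)$ are the $\QQ$- respectively $\ZZ$-spans of nested subsets of the basis of $B(G,G)$ given by transitive bisets. Hence $e$ is a central idempotent of $B^\Delta(G,G)$, so by Theorem~\ref{thm RBDelta} with $R=\ZZ$ (equivalently Theorem~\ref{thm bifree intro}) we have $e=\sum_{U\in T}e_U$ for a set $T$ of perfect subgroups in $\Sigmahat_G$. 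Using $e_U=\sum_{(V,\chi):\,V^{(\infty)}\cong U}e_{(V,\chi)}$ and the linear independence of the $e_{(V,\chi)}$, it follows that $\bigcup_{\calE\in S}\calE=\{(V,\chi)\in\calE_G:\,V^{(\infty)}\in T\}$; thus $S$ is at the same time a union of $\approx$-classes and a union of fibres of $(V,\chi)\mapsto V^{(\infty)}$. Consequently $0$ and $1$ are the only central idempotents as soon as the common coarsening of these two partitions of $\calE_G$ is the trivial one, and — passing freely between $(V,\chi)$ and the block $e_{V^{(\infty)}}$ along the $V^{(\infty)}$-fibres — this reduces to linking, for every perfect $U\in\Sigmahat_G$, some pair $(U,\chi)$ to $(1,1)$ by a chain as in~\eqref{eqn relation intro}.

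I expect this last reduction to be \textbf{the main obstacle}. One must produce explicit left-free bisets — built from the deflation of $U$ onto its (trivial) abelianization, composed with induction from and restriction to intermediate quotients — and show that the resulting products $e_{(U,\chi)}\QQ B^{\trl}(G,G)e_{(U',\chi')}$ along such a chain down to the trivial group are nonzero. This, together with the diagonal-block lemma above, is where the real content lies; both require a concrete computation inside the ghost ring of $\QQ B^{\trl}(G,G)$ and an identification of how the idempotents $e_{(U,\chi)}$ interact with deflation bisets, and everything else in the argument is formal.
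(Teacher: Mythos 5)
Your ring-theoretic skeleton is essentially the paper's own (your Peirce-decomposition argument is Lemma~\ref{lem central idempotents via relation}, and your reduction of the $\ZZ$-statement to linking each isomorphism class to $(1,1)$ is exactly how Theorem~\ref{thm left-free} runs), but the two ingredients you explicitly defer are precisely where all the content of the theorem lives, and as written they are genuine gaps. First, your ``diagonal-block lemma'' ($e_{(U,\chi)}\cdotG \QQ B^{\trl}(G,G)\cdotG e_{(U,\chi)}=e_{(U,\chi)}\cdotG \QQ B^\Delta(G,G)\cdotG e_{(U,\chi)}$, a simple algebra) is true, but you give no proof; it needs the ghost ring $\rho_G\colon KB^{\trl}(G,G)\myiso KA(G,G)^{G\times G}$ of Remark~\ref{rem tau}, where one sees that a triple $(W,\alpha,W')$ sandwiched between two copies of $e_{(U,\chi)}$ forces $W\cong U\cong W'$, so the surjection $\alpha$ is an isomorphism. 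The paper takes a cleaner route to the same end: it quotes the decomposition $KB^{\trl}(G,G)=KB^\Delta(G,G)\oplus J$ with $J$ the Jacobson radical (Proposition~\ref{prop radical decomp}, from [BD]), deduces via Lemma~\ref{lem idempotent and decomp} that every central idempotent of $\QQ B^{\trl}(G,G)$ already lies in $\QQ B^\Delta(G,G)$, and then only needs the formal Lemma~\ref{lem central idempotents via relation}, with no simplicity of diagonal blocks required. Your route would work too, but only after supplying that lemma.

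Second, and more seriously, the link down to the trivial group is not ``a reduction you expect to be the main obstacle'' --- it \emph{is} the theorem, and you have not proved it. The paper's version is Corollary~\ref{cor trivial subgroup}: $e_{(1,1)}\cdotG \QQ B^{\trl}(G,G)\cdotG e_{(U,\chi)}\neq\{0\}$ if and only if $\chi=1$, which gives the one-step link $(U,1)\sim(1,1)$ for \emph{every} $U\in\Sigmahat_G$ (no chain through intermediate quotients is needed; for a perfect $U$ the deflation to the abelianization is already the deflation to $1$). This rests on the character-theoretic criterion of Lemma~\ref{lem eBtrle'}(b), whose proof occupies all of Section~\ref{sec proof of lemma} and requires both the explicit formula for $\rho_G(e_{(U,\chi,V)})$ (Proposition~\ref{prop rho(e)}) and the nonvanishing result Proposition~\ref{prop stembridge} based on Stembridge's argument; none of this is reproduced or replaced in your proposal. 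Finally, note that your combinatorial reduction for the integral statement also silently uses that $(U,1)\in\calE_G$ for every $U$ and that membership of $(U,\chi)$ in the support of an integral central idempotent forces membership of $(U,\chi')$ for all $\chi'$; the latter is supplied in the paper by Theorem~\ref{thm RBDelta} together with Proposition~\ref{prop image of e_U}, which you should cite explicitly rather than fold into ``the fibres of $V\mapsto V^{(\infty)}$.''
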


The relation defined by (\ref{eqn relation intro}) is reformulated in explicit character-theoretic terms in Lemma~\ref{lem eBtrle'}. Theorem~\ref{thm left-free intro} is a special case of Theorem~\ref{thm left-free} and Corollary~\ref{cor general}, in which scalar extensions $RB^{\trl}(G,G)$ and $KB^{\trl}(G,G)$ for a larger class of integral domains $R$ (replacing $\ZZ$) and any field $K$ of characteristic $0$ (replacing $\QQ$) are considered.

\medskip
The paper is arranged as follows. Section~\ref{sec notation} introduces the notation used in the paper. In Section~\ref{sec bifree} we consider the bifree double Burnside ring $B^\Delta(G,G)$. The main theorem of this section, Theorem~\ref{thm RBDelta}, describes the primitive central idempotents of $RB^\Delta(G,G)$ for certain integral domains $R$. Remark~\ref{rem bifree idempotents over field} summarizes the case where $R$ is a field with some restrictions on the characteristic. Section~\ref{sec fusion systems} uses the same methods as Section~\ref{sec bifree} to show that the double Burnside ring $B^{\calF}(S,S)$ associated to a fusion system $\calF$ on a $p$-group $S$ has connected center, see Theorem~\ref{thm fusion}. The ring $B^{\calF}(S,S)$ was introduced in \cite{BD}; it is a subring of $B^\Delta(S,S)$. In Section~\ref{sec left-free} the primitive central idempotents of $RB^{\trl}(G,G)$ are studied for certain integral domains $R$. The main results are Theorem~\ref{thm left-free} and Corollary~\ref{cor general}, which imply Theorem~\ref{thm left-free intro}. Section~\ref{sec proof of lemma} is devoted solely to the technical proof of Lemma~\ref{lem eBtrle'}, which uses notation and a variety of results  from \cite{BD}. In Section~\ref{sec ex} we consider the example where $G$ is a cyclic group or an elementary abelian group.  


\section{Notation}\label{sec notation}

Throughout, $G$ denotes a finite group.

\begin{nothing} {\bf Generalities.}\quad 
The cardinality of a set $X$ is denoted by $|X|$.

\smallskip
We denote by $H\le G$ that $H$ is a subgroup of $G$ and by $H<G$ that $H$ is a proper subgroup of $G$. Similarly, $H\trianglelefteq G$ (resp. $H\vartriangleleft G$) denotes that $H$ is a normal (resp.~proper normal) subgroup of $G$. The trivial subgroup of $G$ will often be denoted by $1$. The group of automorphisms of $G$ is denoted by $\Aut(G)$. For an element $g$ of $G$, we denote by $c_g\in\Aut(G)$ the automorphism $x\mapsto gxg^{-1}$ of $G$. By $\Inn(G)$ we denote the group of inner automorphisms, namely $c_g$, $g\in G$, and by $\Out(G):=\Aut(G)/\Inn(G)$ we denote the outer automorphism group of $G$. For $H\le G$ and $g\in G$ we also write $\lexp{g}{H}$ instead of $gHg^{-1}$. If two subgroups $H$ and $K$ of $G$ are conjugate we write $H=_G K$ and if $H$ is conjugate to a subgroup of $K$ we write $H\le_G K$.

\smallskip
If $X$ is a left $G$-set and $x\in X$, we write $\stab_G(x)$ for the stabilizer of $x$ in $G$, and $X^G$ for the set of $G$-fixed points of $X$.
\end{nothing}

\begin{nothing} {\bf The (double) Burnside ring.}\quad
Recall that the Burnside ring $B(G)$ of $G$ is the Grothendieck group of the category of finite left $G$-sets with respect to the disjoint union of $G$-sets. The multiplication on $B(G)$ is induced by taking the direct product of $G$-sets. The element in $B(G)$ associated to a finite left $G$-set $X$ is denoted by $[X]$. If $H$ runs through a set of representatives of the conjugacy classes of subgroups of $G$, then the elements $[G/H]\in B(G)$, associated to the transitive $G$-sets $G/H$, form a $\ZZ$-basis of $B(G)$. For any subgroup $H$ of $G$ we have a ring homomorphism $\Phi_H\colon B(G)\to \ZZ$ determined by $\Phi_H([X])=|X^H|$, for any finite left $G$-set $X$. We refer the reader to \cite[\S80A]{CR} or \cite[Chapter~2]{BoucSLN} for basic facts on the Burnside ring.

\smallskip
For two finite groups $G$ and $H$, the double Burnside group $B(G,H)$ is the Grothendieck group of the category of finite $(G,H)$-bisets $X$, i.e., finite sets with a left $G$-action and a right $H$-action that commute with each other, with respect to disjoint unions. As a special case, we obtain the double Burnside ring $B(G,G)$ whose multiplication is induced by taking the tensor product $X\times_G Y$ of two $(G,G)$-bisets $X$ and $Y$. This is the set of $G$-orbits $x\times_G y$ of elements $(x,y)\in X\times Y$ under the $G$-action $g(x,y):=(xg^{-1},gy)$. We often identify $(G,H)$-biset structures on a set $X$ with left $G\times H$-set structures on the same set $X$ via $(g,h)x=gxh^{-1}$ for $x\in X$, $g\in G$ and $h\in H$. With this identification we can identify $B(G,H)$ and $B(G\times H)$ as additive groups. Note that the abelian group $B(G\times G)$ now has two ring structures, the first one given by the direct product construction, the second one by the tensor product construction on $B(G,G)$. We denote the first one just by \lq\lq$\cdot$\rq\rq\ and the second one by \lq\lq$\cdot_G$\rq\rq. For more details we refer the reader to \cite[Chapter~2]{BoucSLN}.

\smallskip
If $G$ and $H$ are finite groups and if $L\le G\times H$ is a subgroup, we denote by $p_1\colon G\times H\to G$ and $p_2\colon G\times H\to H$ the two projection maps and we set $k_1(L):=\{g\in G\mid (g,1)\in L\}$ and $k_2(L):=\{h\in H\mid (1,h)\in L\}$. 
Then $k_i(L)\trianglelefteq p_i(L)$ for $i=1,2$, and $\eta(L)\colon p_2(L)/k_2(L)\to p_1(L)/k_1(L)$, defined by $hk_2(L)\mapsto gk_1(L)$ whenever $(g,h)\in L$, is a well-defined isomorphism. 
This way one obtains a bijection between the set of subgroups $L$ of $G\times H$ and the quintuples $(P_1,K_1,\eta,P_2,K_2)$ with $K_1\trianglelefteq P_1\le G$, $K_2\trianglelefteq P_2\le H$, and $\eta\colon P_2/K_2\myiso P_1/K_1$, cf.~\cite[Lemma~2.3.25]{BoucSLN}. With this notation, a $(G,G)$-biset $X$ is left-free (resp.~bifree) if and only if each stabilizer $L$ of an element of $X$ satisfies $k_1(L)=1$ (resp.~$k_1(L)=1=k_2(L)$). Thus,  the corresponding left-free double Burnside ring $B^{\trl}(G,G)$ (resp.~bifree double Burnside ring $B^\Delta(G,G)$) is a free $\ZZ$-module with basis elements $[G\times G/L]$, where $L$ runs through a set of representatives of $G\times G$-conjugacy classes of subgroups of $G\times G$ with $k_1(L)=1$ (resp.~$k_1(L)=1=k_2(L)$). For an isomorphism $\phi\colon V\myiso U$ between subgroups of $G$ we set $\Delta(U,\phi,V):=\{(\phi(v),v)\mid v\in V\}$, the subgroup corresponding to $(U,1,\phi,V,1)$. If $U=V$ and $\phi=\id_U$ we also write $\Delta(U)$.
\end{nothing}


\section{Central idempotents of $RB^\Delta(G,G)$}\label{sec bifree}

Throughout this section $G$ denotes a finite group and $R$ denotes a commutative ring. We denote by $\Sigma_G$ the set of subgroups of $G$, by $\Sigmatilde_G\subseteq \Sigma_G$ a set of representatives of the conjugacy classes of $\Sigma_G$, and by $\Sigmahat_G\subseteq \Sigmatilde_G$ a set of representatives of the isomorphism classes of $\Sigma_G$.

\begin{lemma}\label{lem trans perm module}
Let  $R$ be an integral domain and let $X$ be a transitive $G$-set. If no prime divisor of $|X|$ is invertible in $R$ then the $RG$-permutation module $RX$ is indecomposable.
\end{lemma}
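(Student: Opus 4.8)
The plan is to reduce to showing that $E:=\End_{RG}(RX)$ has no idempotents besides $0$ and $1$, which is equivalent to indecomposability of $RX$. The case $|X|=1$ being trivial (there $RX\cong R$, which has no nontrivial idempotent endomorphisms since $R$ is a domain), set $n:=|X|\ge 2$, fix $x_0\in X$, let $H:=\stab_G(x_0)$ so that $X\cong G/H$ and $n=[G:H]$, and write $F:=\mathrm{Frac}(R)$. Identifying $E$ with the centralizer of the permutation matrices of $G$ in $\mathrm{Mat}_n(R)$ (rows and columns indexed by $X$), I would split into the cases $\charac R=0$ and $\charac R=p>0$; these genuinely need separate treatment, because in characteristic $0$ one cannot pass to $F$: as soon as $|X|>1$ the semisimple module $FX$ decomposes, while $RX$ need not (e.g.\ $RX=\ZZ G$ for $R=\ZZ$).

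For $\charac R=0$ the device is a trace identity. Let $e\in E$ be idempotent. Transitivity of $G$ on $X$ together with the $G$-equivariance of $e$ forces all diagonal entries of $e$ to coincide, say to $c\in R$, so $\tr(e)=nc$; on the other hand $e\otimes_R F$ is an idempotent matrix over a characteristic-$0$ field, so $\tr(e)$ equals (the image in $R$ of) its rank $r\in\{0,1,\dots,n\}$. Hence $nc=r\cdot 1_R$. If $e\ne 0,1$ then $0<r<n$, so $c=r/n$ has, in lowest terms, a denominator $b>1$ dividing $n$, and a short B\'ezout argument turns $c\in R$ into $1/b\in R$; thus $b$ — and hence some prime divisor of $n=|X|$ — is invertible in $R$, contradicting the hypothesis. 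So $e\in\{0,1\}$.

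For $\charac R=p>0$, note first that every prime $\ne p$ is a unit of $R$ (it is a nonzero element of $\FF_p\subseteq R$), so the hypothesis forces $n=p^a$ for some $a\ge 1$. Since $RX$ is $R$-free, a nontrivial decomposition of it would remain nontrivial after applying $-\otimes_R F$, so it suffices to prove $FX\cong\Ind_H^G F$ is indecomposable as an $FG$-module. Choosing $P\in\mathrm{Syl}_p(G)$ and applying the Mackey formula to $\mathrm{Res}^G_P\Ind_H^G F$, each summand $F[P/(P\cap{}^gH)]$ has dimension $[P:P\cap{}^gH]\ge|P|/|H|_p=p^a$, since $P\cap{}^gH$ is a $p$-subgroup of ${}^gH$; as these dimensions add up to $[G:H]=p^a$, the Mackey decomposition must have a single term, so $\mathrm{Res}_P FX\cong F[P/Q_0]$ for one subgroup $Q_0\le P$. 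A transitive permutation module of a $p$-group has simple socle — its socle is a sum of trivial modules and equals the one-dimensional space $(F[P/Q_0])^P$ — hence is indecomposable; therefore $\mathrm{Res}_P FX$, and a fortiori $FX$ and $RX$, are indecomposable.

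The characteristic-zero case is, I expect, the real obstacle: one cannot descend to the fraction field, so the argument must work with the idempotent over $R$ itself, and the non-invertibility hypothesis only comes into play through the integrality constraint $nc=r\cdot 1_R$. The positive-characteristic case is comparatively routine once one observes that the hypothesis makes $|X|$ a $p$-power, which is exactly what forces the Mackey sum to degenerate.
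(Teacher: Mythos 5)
Your proof is correct, but it proceeds quite differently from the paper's. The paper argues uniformly in all characteristics: given a nontrivial decomposition $RX=M\oplus N$, it reduces modulo a maximal ideal containing a given prime divisor $p$ of $|X|$ (possible precisely because $pR\neq R$) and invokes Green's theorem on relatively $H$-projective modules to conclude that $|X|_p$ divides $\rk_R(M)$ for every such $p$, whence $|X|\mid\rk_R(M)$ and $N=0$. You instead split by $\charac R$: in characteristic $0$ your trace identity $nc=r\cdot 1_R$ combined with B\'ezout is an elementary integrality argument that avoids both reduction mod $p$ and Green's theorem entirely; in characteristic $p$ your observation that $|X|$ must be a $p$-power, followed by Mackey plus the simple-socle property of transitive permutation modules of $p$-groups, is essentially a self-contained proof of the special case of Green's theorem that the paper cites. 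The paper's route is shorter and treats the "mixed" situation (characteristic $0$ with several non-invertible primes) in one stroke; yours is more elementary and makes visible exactly where the non-invertibility hypothesis enters in each characteristic. All the individual steps you use check out (in particular, the cancellation of $n/b$ uses $\charac R=0$, and nonzero summands of the free module $RX$ survive $-\otimes_R F$ because they are torsion-free), so this is a valid alternative proof.
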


\begin{proof}
We may assume that $|X|\neq 1$. Assume that $RX = M\oplus N$ is a direct sum decomposition into $RG$-submodules $M$ and $N$, and assume that $M\neq\{0\}$. Then $M$ and $N$ are finitely generated projective $R$-modules and they have a well-defined $R$-rank, cf.~\cite[Theorem~I.4.12]{DI}. Let $x\in X$, set $H:=\stab_G(x)$, and let $p$ be a prime divisor of $|X|=[G:H]$. Since $pR\neq R$, there exists a maximal ideal $P$ of $R$ such that $p\in P$. Then $F:=R/P$ is a field of characteristic $p$. Let $\Fbar$ denote an algebraic closure of $F$. Then $\Fbar X\cong \Fbar\otimes_R M \oplus \Fbar\otimes_R N$, where $\Fbar X$, $\Fbar\otimes_R M$ and $\Fbar\otimes _R N$ are relatively $H$-projective $\Fbar G$-modules. By a result of Green, the $p$-part $[G:H]_p=|X|_p$ of $|X|$ divides
\begin{equation*}
  \dim_{\Fbar} (\Fbar\otimes_R M) =\dim_F (F\otimes_R M) = \rk_{R_P}(R_P\otimes_R M) = \rk_R (M)\,.
\end{equation*}
Since $p$ is arbitrary, we conclude that $|X|$ divides $\rk_R(M)$. But
\begin{equation*}
  0\neq \rk_R(M)\le \rk_R(M)+\rk_R (N) = \rk_R (M\oplus N) = \rk_R(RX) = |X|,
\end{equation*}
which implies $\rk_R(M)=|X|$ and $\rk_R(N)=0$. Thus, $N=0$ and $M=RX$.
\end{proof}

\begin{remark}\label{rem central idempotents}
Let $\Lambda$ be a ring and let $1_\Lambda= e_1+\cdots+e_n$ be a decomposition of $1_\Lambda$ into primitive pairwise orthogonal idempotents $e_1,\ldots,e_n$ of $\Lambda$. Then every central idempotent $e$ of $\Lambda$ is equal to the subsum $e=\sum_{i\in I} e_i$, where $I$ denotes the set of all elements $i\in \{1,\ldots,n\}$ satisfying $e_ie=e_i$. In fact, let $i\in\{1,\ldots,n\}$ be arbitrary. Then $ee_i$ and $(1_\Lambda-e)e_i$ are orthogonal idempotents with $ee_i+(1-e)e_i=e_i$. Since $e_i$ is primitive, we obtain $ee_i=e_i$ or $ee_i=0$. By multiplying the equation $1_\Lambda=e_1+\cdots+e_n$ on both sides with $e$, we now obtain the desired expression for $e$.
\end{remark}

\begin{proposition}\label{prop idempotents of Hecke algebras}
Let $X$ be a finite $G$-set and let $R$ be an integral domain. Assume that, for every $x\in X$ and for every prime divisor $p$ of $[G:\stab_G(x)]$, one has $\{0\}\neq pR\neq R$. Then the ring $\End_{RG}(RX)$ has no central idempotent different from $0$ and $1$.
\end{proposition}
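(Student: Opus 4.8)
The plan is to reduce the statement about central idempotents of the endomorphism ring to an indecomposability statement about a permutation module, and then invoke Lemma~\ref{lem trans perm module}. First I would decompose $X$ into its $G$-orbits, say $X = X_1 \sqcup \cdots \sqcup X_r$ with each $X_i$ transitive. Then $RX = RX_1 \oplus \cdots \oplus RX_r$ as $RG$-modules, and correspondingly $\End_{RG}(RX)$ carries a block-matrix structure with diagonal blocks $\End_{RG}(RX_i)$. A central idempotent $e$ of $\End_{RG}(RX)$ corresponds to a $G$-subrepresentation of $RX$ which is a direct summand and is also stable under every $RG$-endomorphism; concretely, $e$ is determined by the submodule $eRX$, and centrality forces $eRX$ to be a direct sum of isotypic components in an appropriate sense.

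The cleaner route is as follows. For each $i$, the hypothesis says that no prime divisor of $|X_i| = [G:\stab_G(x)]$ (for $x \in X_i$) is invertible in $R$, so Lemma~\ref{lem trans perm module} applies and $RX_i$ is an indecomposable $RG$-module. Hence each $\End_{RG}(RX_i)$ has no idempotents other than $0$ and $1$ — wait, indecomposability over a general integral domain need not give a local endomorphism ring, so instead I would argue directly: an idempotent $f \in \End_{RG}(RX_i)$ gives $RX_i = \im f \oplus \ker f$, and indecomposability forces $f = 0$ or $f = 1$. Thus each diagonal block $\End_{RG}(RX_i)$ has only the trivial idempotents, in particular only the trivial \emph{central} idempotents.

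Now let $e$ be a central idempotent of $\End_{RG}(RX)$. Writing $e = (e_{ij})$ in the block decomposition, centrality with respect to the diagonal idempotents $\pi_i$ (projection onto $RX_i$) forces $e$ to be block-diagonal, $e = \mathrm{diag}(e_1, \ldots, e_r)$ with $e_i \in \End_{RG}(RX_i)$ a central (indeed arbitrary) idempotent, hence $e_i \in \{0,1\}$ by the previous paragraph. So $e = \mathrm{diag}(\epsilon_1, \ldots, \epsilon_r)$ with each $\epsilon_i \in \{0,1\}$. It remains to rule out the mixed cases: if $\epsilon_i = 1$ and $\epsilon_j = 0$ for some $i \neq j$, I need that $e$ fails to be central. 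This uses the presence of a nonzero $RG$-homomorphism between some pair of the $RX_i$; but in general distinct orbits need not admit such a map. Here is where the hypothesis genuinely bites: the trivial summand. Since every $|X_i|$ has all prime divisors non-invertible, actually I should look at $\Hom_{RG}(RX_i, RX_j)$ — the augmentation-type maps. More carefully: $\Hom_{RG}(RX_i, RX_j) \cong (RX_j)^{\stab_G(x_i)} \neq \{0\}$ always (it contains the "transfer then sum" map, e.g. the composite $RX_i \to R \to RX_j$ sending each basis element to the sum of a $\stab_G(x_i)$-orbit), so there is always a nonzero $RG$-map $\phi: RX_i \to RX_j$. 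Viewing $\phi$ as an element of $\End_{RG}(RX)$ supported in the $(j,i)$ block, the commutator $e\phi - \phi e$ has $(j,i)$-entry $\epsilon_j \phi - \phi \epsilon_i = -\phi \neq 0$, contradicting centrality of $e$. Hence all $\epsilon_i$ are equal, i.e. $e = 0$ or $e = 1$.

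The main obstacle I anticipate is the last step: one must make sure a nonzero $RG$-homomorphism between two permutation modules on $G$-orbits always exists (so that no mixed block-diagonal central idempotent survives), and that the relevant block-entry computation of $e\phi - \phi e$ really detects non-centrality. The existence of such a map is automatic because $\Hom_{RG}(RX_i,RX_j) = (RX_j)^{\stab_G(x_i)}$ contains the nonzero fixed vector $\sum_{y \in X_j} y$ (summing over the whole orbit, or over a $\stab_G(x_i)$-suborbit). Everything else — passing to orbits, the block-matrix formalism, and applying Lemma~\ref{lem trans perm module} — is routine. Note that the full-strength hypothesis $\{0\} \neq pR$ (i.e. $R$ is of characteristic zero or mixed, never $\FF_p$) is what is needed for Lemma~\ref{lem trans perm module}, while $pR \neq R$ is the non-invertibility that drives indecomposability; both are used exactly once, in the appeal to that lemma.
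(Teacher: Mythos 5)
Your argument is correct, and it takes a genuinely different route from the paper's in the key step. Both proofs start with the orbit decomposition $RX=RX_1\oplus\cdots\oplus RX_n$ and both rest on Lemma~\ref{lem trans perm module} to handle a single orbit; but where you stay entirely over $R$ --- forcing a central idempotent to be block-diagonal with entries $\epsilon_i\in\{0,1\}$ and then killing the mixed cases by exhibiting an explicit nonzero map $\phi\in\Hom_{RG}(RX_i,RX_j)$ (the composite $RX_i\to R\to RX_j$ through the trivial module) whose commutator with $e$ is visibly nonzero --- the paper passes to the field of fractions $K$, splits the trivial module off each $KX_i$ (this is where it uses $|X_i|\neq 0$ in $K$), and argues that a central idempotent absorbing one of these trivial-summand projections must absorb all of them because they are conjugate by an automorphism of $KX$. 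The two linking devices are morally the same (the trivial module connects the orbits), but yours avoids the fraction field, the finer primitive decomposition over $K$, and the conjugation trick, so it is the more elementary of the two. One small inaccuracy in your closing remark: Lemma~\ref{lem trans perm module} only requires $pR\neq R$, not $\{0\}\neq pR$; the latter condition is what the paper uses to make $|X_i|$ invertible in $K$. Since your argument never invokes $\{0\}\neq pR$ anywhere, you have in fact proved the marginally stronger statement in which that half of the hypothesis is dropped --- which is fine, but you should not claim that both halves are consumed by the appeal to the lemma.
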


\begin{proof}
Let $K$ denote the field of fractions of $R$. We decompose $X$ into $G$-orbits, $X=X_1\coprod\cdots\coprod X_n$, and obtain decompositions
\begin{equation}\label{eqn X-decomp}
  RX = RX_1\oplus\cdots\oplus RX_n\quad\text{and}\quad KX = KX_1 \oplus \cdots \oplus KX_n\,
\end{equation}
into $RG$-submodules and $KG$-submodules, respectively. We decompose $KX_i$, for each $i=1,\ldots,n$, into indecomposable $KG$-submodules:
\begin{equation}\label{eqn X_i-decomposition}
  KX_i = V_i^{(1)} \oplus \cdots \oplus V_i^{(r_i)}\,.
\end{equation}
We may assume that $V_i^{(1)}\cong K$, the trivial $KG$-module. In fact, the hypothesis on $R$ and $X$ implies that $|X_i|\neq 0$ in $K$. This implies that $\iota\colon K\to KX_i$, $1\mapsto |X_i|^{-1}\sum_{x\in X_i}x$, and $\pi\colon KX_i\to K$, $x\mapsto 1$, are $KG$-module homomorphisms with $\pi\circ\iota=\id_K$, so that $K$ is isomorphic to a direct summand of $KX_i$. Let $e_i\in\End_{RG}(RX)$ denote the idempotent which is the projection onto the $i$-th component in the first decomposition in (\ref{eqn X-decomp}). Then $e_i$ is primitive in $\End_{RG}(RX)$, by Lemma~\ref{lem trans perm module}. We view $\End_{RG}(RX)$ as a subring of $\End_{KG}(KX)$ via the canonical embedding and decompose $e_i$ in $\End_{KG}(KX)$ further into primitive idempotents corresponding to the decomposition in (\ref{eqn X_i-decomposition}):
\begin{equation*}
  e_i=e_i^{(1)}+\cdots+e_i^{(r_i)}\,.
\end{equation*}
Altogether we have a primitive decomposition
\begin{equation}\label{eqn prim decomp}
  1 = (e_1^{(1)} + e_1^{(2)} + \cdots + e_1^{(r_1)}) + \cdots + (e_n^{(1)} + \cdots + e_n^{(r_n)})
\end{equation}
in $\End_{KG}(KX)$. Now let $e$ be a non-zero central idempotent of $\End_{RG}(RX)$. Since $1=e_1+\cdots + e_n$ is a primitive decomposition of $1$ in $\End_{RG}(RX)$, we have $e=\sum_{i\in I} e_i$ for some $\emptyset\neq I\subseteq \{1,\ldots,n\}$, by Remark~\ref{rem central idempotents}. Since $e$ is also a central idempotent of $\End_{KG}(KX)$, it is also a subsum of the decomposition in (\ref{eqn prim decomp}). Since $I\neq \emptyset$, there exists an element $i\in I$, and we have $e_ie= e_i$. This implies that $e_i^{(1)} e = e_i^{(1)}$. For every $j\in\{1,\ldots,n\}$ there exists an isomorphism $\alpha\colon KX\to KX$ such that $\alpha e_i^{(1)}\alpha^{-1} = e_j^{(1)}$. The equation $e_i^{(1)}e = e_i^{(1)}$ implies
\begin{equation*}
  e_j^{(1)} = \alpha e_i^{(1)}\alpha^{-1} = \alpha e_i^{(1)} e \alpha^{-1} = \alpha e_i^{(1)}\alpha^{-1} e 
  = e_j^{(1)} e\,.
\end{equation*}
This implies that $e_j e\neq 0$, and Remark~\ref{rem central idempotents} implies that $j\in I$. Thus $I=\{1,\ldots,n\}$ and $e=1$.
\end{proof}

Recall that we have a ring homomorphism $\Delta\colon B(G)\to B^{\Delta}(G,G)$, given by $\Delta([G/U])=[G\times G/\Delta(U)]$. The following lemma is proved in \cite{BP}.

\begin{lemma}\label{lem BP lemma} 
Let $a\in B(G)$ and let $U\le G$. Then
\begin{equation*}
  \Phi_{\Delta(U)}(\Delta(a)) = |C_G(U)|\cdot\Phi_U(a)\,.
\end{equation*}
\end{lemma}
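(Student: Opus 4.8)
The plan is to reduce the identity to the case $a=[G/V]$ for a subgroup $V\le G$, using that $\Delta$, $\Phi_{\Delta(U)}$ and $\Phi_U$ are all $\ZZ$-linear, and then to compute both marks directly as numbers of fixed points. For $a=[G/V]$ one has $\Delta(a)=[G\times G/\Delta(V)]$, so both sides become explicit nonnegative integers, and it remains to match them.

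First I would record the right-hand side. By definition $\Phi_U([G/V])=|(G/V)^U|$, and a coset $gV$ is $U$-fixed precisely when $g^{-1}Ug\le V$. Setting $N:=\{g\in G\mid g^{-1}Ug\le V\}$ --- which is a union of right cosets of $V$, since $g\in N$ and $v\in V$ give $(gv)^{-1}U(gv)=v^{-1}(g^{-1}Ug)v\le V$ --- this yields $\Phi_U([G/V])=|N|/|V|$.

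Next I would compute the left-hand side. Here $\Phi_{\Delta(U)}([G\times G/\Delta(V)])$ is the number of cosets $(g_1,g_2)\Delta(V)$ fixed by $\Delta(U)=\{(u,u)\mid u\in U\}$; as $\Delta(U)$ is generated by these elements, fixedness can be tested on each $(u,u)$ separately. Now $(ug_1,ug_2)\Delta(V)=(g_1,g_2)\Delta(V)$ means $(g_1^{-1}ug_1,\,g_2^{-1}ug_2)\in\Delta(V)$, i.e.\ $g_1^{-1}ug_1=g_2^{-1}ug_2$ with this common element lying in $V$. Requiring this for all $u\in U$ translates into two conditions: $g_1^{-1}Ug_1\le V$ (equivalently $g_1\in N$), and $g_1g_2^{-1}\in C_G(U)$ (which, given the first, is equivalent to $g_2\in N$). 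Hence the set of pairs $(g_1,g_2)$ whose coset is fixed is $\{(g_1,g_2)\mid g_1\in N,\ g_2\in C_G(U)g_1\}$, of cardinality $|N|\cdot|C_G(U)|$; since this set is a union of full $\Delta(V)$-cosets of size $|\Delta(V)|=|V|$, the number of fixed cosets is $|N|\cdot|C_G(U)|/|V|$. Comparing with the previous paragraph gives $\Phi_{\Delta(U)}(\Delta([G/V]))=|C_G(U)|\cdot|N|/|V|=|C_G(U)|\cdot\Phi_U([G/V])$, which proves the lemma.

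The argument is entirely elementary; the only step needing a little care is the bookkeeping in the third paragraph --- unravelling the $\Delta(U)$-fixed-point condition on $(G\times G)/\Delta(V)$ into the two conditions ``$g_1^{-1}Ug_1\le V$'' and ``$g_1g_2^{-1}\in C_G(U)$'', and checking that the set of admissible pairs is genuinely a union of full $\Delta(V)$-cosets so that division by $|V|$ is legitimate. An alternative, slightly more machinery-heavy route would use the isomorphism $(G\times G)/\Delta(V)\cong G\times_V G$ of $(G,G)$-bisets and compute the mark on the right-hand model, but the direct count above seems the shortest.
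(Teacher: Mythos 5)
Your proof is correct. The paper itself gives no argument for this lemma --- it simply cites \cite{BP} --- so there is nothing internal to compare against; your direct mark computation (reduce by linearity to $a=[G/V]$, then count $\Delta(U)$-fixed cosets of $(G\times G)/\Delta(V)$ against $U$-fixed cosets of $G/V$) is the standard and most elementary route, and all the steps check out: the fixed-point condition on $(g_1,g_2)\Delta(V)$ does unravel to ``$g_1^{-1}Ug_1\le V$ and $g_1g_2^{-1}\in C_G(U)$'', the admissible set $\{(g_1,g_2)\mid g_1\in N,\ g_2\in C_G(U)g_1\}$ is stable under right multiplication by $\Delta(V)$, and the division by $|V|$ is legitimate. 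One small slip worth fixing: your parenthetical claim that, given $g_1\in N$, the condition $g_1g_2^{-1}\in C_G(U)$ is \emph{equivalent} to $g_2\in N$ is false in general --- only the forward implication holds (e.g.\ $G=S_3$, $U=\langle(12)\rangle$, $V=G$, $g_1=e$, $g_2=(13)$ has both $g_1,g_2\in N=G$ but $g_1g_2^{-1}\notin C_G(U)$). This does not affect the proof, since your actual count uses the correct description $g_2\in C_G(U)g_1$ rather than $g_2\in N$, but the remark should be weakened to an implication or deleted.
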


\begin{remark}\label{rem sigma} 
Recall from \cite[5.3--5.5]{BD} that the map
\begin{align*}
  \sigma_G\colon B^\Delta(G,G) & \to \prod_{U\in\Sigmahat_G}\End_{\ZZ \Out(U)}(\ZZ\Injbar(U,G))\,,\\
  a & \mapsto \Bigl([\mu]\mapsto\sum_{[\lambda]\in\Injbar(U,G)} 
     \frac{\Phi_{\Delta(\lambda(U),\lambda\mu^{-1},\mu(U))}(a)}{|C_G(\lambda(U))|}
        \cdot[\lambda]\Bigr)_{U\in\Sigmahat_G}
\end{align*}
is a well-defined injective ring homomorphism with image of finite index which induces an $R$-algebra homomorphism
\begin{equation*}
  \sigma_G\colon RB^\Delta(G,G) \to \prod_{U\in\Sigmahat_G}\End_{R\Out(U)}(R\Injbar(U,G))
\end{equation*}
for every commutative ring $R$. The latter homomorphism is an isomorphism if $|G|$ is invertible in $R$. In particular, if $a\in Z(B^\Delta(G,G))$ then $\sigma_G(a)$ is central in $\prod_{U\in\Sigmahat_G}\End_{\ZZ \Out(U)}(\ZZ\Injbar(U,G))$ and in $ \prod_{U\in\Sigmahat_G}\End_{R\Out(U)}(R\Injbar(U,G))$. Here, for $U\in\Sigmahat_G$, $\Inj(U,G)$ denotes the $(G,\Aut(U))$-biset of injective group homomorphisms from $U$ to $G$ with $g\cdot\lambda\cdot\omega:=c_g\circ\lambda\circ\omega$ for $g\in G$, $\lambda\in\Inj(U,G)$, and $\omega\in\Aut(U)$. Finally, $\Injbar(U,G):=G\backslash\Inj(U,G)$ is the set of $G$-orbits with the induced right action of $\Out(U)$. The $G$-orbit of $\lambda\in\Inj(U,G)$ is denoted by $[\lambda]$. We fix a subgroup $U\in\Sigmahat_G$. Let $U_1,\ldots, U_r\in\Sigmatilde_G$ be the representatives of the $G$-conjugacy classes of all subgroups of $G$ which are isomorphic to $U$. Then the right $\Out(U)$-set $\Injbar(U,G)$ decomposes into orbits,
\begin{equation*}
  \Injbar(U,G)=\coprod_{i=1}^r \Injbar(U,G)_i\,,
\end{equation*}
where $\Inj(U,G)_i$ denotes the set of elements $\lambda\in\Inj(U,G)$ such that $\lambda(U)$ is $G$-conjugate to $U_i$, and $\Injbar(U,G)_i$ denotes the set of $G$-orbits formed by such elements.
In particular, we obtain a decomposition into $R\Out(U)$-submodules:
\begin{equation}\label{eqn RInjbar decomp}
  R\Injbar(U,G)=\bigoplus_{i=1}^r R\Injbar(U,G)_i\,.
\end{equation}
Finally, note that, for every $i=1,\ldots,r$, the map $\lambda\mapsto [\lambda]$ induces a bijection
\begin{equation*}
  N_G(U_i)\backslash\Inj(U,U_i)\liso \Injbar(U,G)_i
\end{equation*}
of $\Out(U)$-sets, and that $N_G(U_i)\backslash \Inj(U,U_i)\cong A_i\backslash\Aut(U)$ as $\Out(U)$-sets, where $A_i:=\lambda^{-1}B_i\lambda\le\Aut(U)$ denotes the subgroup corresponding to the image $B_i$ of the map $N_G(U_i)\to\Aut(U_i)$, $g\mapsto c_g$, under any isomorphism $\lambda\colon U\liso U_i$.
\end{remark}

\begin{remark}\label{rem idempotents in B(G)}
In this remark we assume that $R$ is an integral domain with field of fractions $K$ such that $|G|$ is invertible in $K$. We denote by $\pi$ the set of prime divisors of $|G|$ which are not invertible in $R$. By $\Theta^\pi_G$ we denote the set of $\pi$-perfect subgroups of $G$, i.e., the subgroups $U$ of $G$ with the property that $U$ has no factor group of prime order $p\in\pi$. For any group $G$ we denote by $G^{(\pi)}$ the smallest normal subgroup of $G$ with solvable factor group of $\pi$-order, i.e., of order only divisible by primes from $\pi$. Clearly, $G^{(\pi)}$ is $\pi$-perfect. Thus, $U^{(\pi)}\in\Theta_G^\pi$ for every $U\le G$. We further define $\Thetatilde_G^{\pi}:=\Theta_G^{\pi}\cap\Sigmatilde_G$ and $\Thetahat_G^{\pi}:=\Theta_G^{\pi}\cap\Sigmahat_G$. Thus, $\Thetahat_G^{\pi}\subseteq\Thetatilde_G^{\pi}$ are sets of representatives of the isomorphism classes and of the conjugacy classes of $\pi$-perfect subgroups of $G$, respectively.

A variation of the arguments in \cite[Corollary~3.3.6]{BcHB} gives the following description of the primitive idempotents of $RB(G)$ and of $KB(G)$. 
For each $U\in\Sigma_G$, let $e_U\in KB(G)$ denote the unique element with the property that $\Phi_{U'}(e_U)=1$ if $U=_G U'$, and $\Phi_{U'}(e_U)=0$ if $U'\neq_G U$. Then the elements $e_U$, $U\in\Sigmatilde_G$, form a set of primitive, pairwise orthogonal idempotents of $KB(G)$ whose sum is equal to $1$. 

For every $U\in\Theta_G^{\pi}$ set
\begin{equation*}
  \varepsilon_{U}^{(\pi)}:=\sum_{\substack{V\in\Sigmatilde_{G}\\ V^{(\pi)}=_G U}} e_V\,.
\end{equation*}
Then the elements $\varepsilon_{U}^{(\pi)}$, $U\in\Thetatilde_G^\pi$, are primitive, pairwise orthogonal idempotents of $RB(G)$ whose sum is equal to $1$. 
\end{remark}

\begin{proposition}\label{prop image of e_U}
Assume that $K$ is a field such that $|G|$ is invertible in $K$. Further
assume the notation established in Remarks~\ref{rem sigma} and \ref{rem idempotents in B(G)}. Let $V\le G$, and let $(f_U)_{U\in\Sigmahat_G}$ denote the image of $e_V\in KB(G)$ under the $K$-algebra homomorphism
\begin{equation*}
  KB(G) \Ar{\Delta} KB^\Delta(G,G)\Ar{\sigma_G} \prod_{U\in\Sigmahat_G} \End_{K\Out(U)}(K\Injbar(U,G))\,.
\end{equation*}
If $U\in\Sigmahat_G$ is not isomorphic to $V$ then $f_U=0$. If $U\in\Sigmahat_G$ is isomorphic to $V$ then $f_U$ is equal to the projection onto the direct summand of $K\Injbar(U,G)$ with respect to the decomposition in (\ref{eqn RInjbar decomp}), with $K$ in place of $R$, which is indexed by the $G$-conjugacy class containing $V$.
\end{proposition}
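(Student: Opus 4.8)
The plan is to compute the image $(f_U)_{U\in\Sigmahat_G}$ componentwise by evaluating the explicit formula for $\sigma_G$ from Remark~\ref{rem sigma} on the element $\Delta(e_V)\in KB^\Delta(G,G)$, and to do this by first working with the mark homomorphisms. Fix $U\in\Sigmahat_G$. By the formula for $\sigma_G$, the component $f_U\in\End_{K\Out(U)}(K\Injbar(U,G))$ sends a basis element $[\mu]\in\Injbar(U,G)$ to
\begin{equation*}
  f_U([\mu]) = \sum_{[\lambda]\in\Injbar(U,G)} \frac{\Phi_{\Delta(\lambda(U),\lambda\mu^{-1},\mu(U))}(\Delta(e_V))}{|C_G(\lambda(U))|}\cdot[\lambda]\,.
\end{equation*}
So the whole problem reduces to computing the numbers $\Phi_{\Delta(\lambda(U),\lambda\mu^{-1},\mu(U))}(\Delta(e_V))$ for arbitrary injections $\lambda,\mu\colon U\to G$. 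The first step is the key simplification: the subgroup $\Delta(\lambda(U),\lambda\mu^{-1},\mu(U))$ is a "twisted diagonal" subgroup of $G\times G$, not of the form $\Delta(W)$, so Lemma~\ref{lem BP lemma} does not apply directly. However, $\Delta(e_V)$ lies in the image of $\Delta\colon KB(G)\to KB^\Delta(G,G)$, and one should show that $\Phi_{\Delta(W,\phi,W')}$ vanishes on this image unless $W$ and $W'$ are $G$-conjugate via an element inducing $\phi$ up to inner automorphisms — more precisely, one reduces to the case $\lambda(U)=_G\mu(U)$ and then, after conjugating, to the case where the twisted diagonal is conjugate to a genuine diagonal $\Delta(W)$. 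The cleanest way to see this is that $\Delta([G/W'])=[G\times G/\Delta(W')]$ and the mark $\Phi_{\Delta(W,\phi,W'')}([G\times G/\Delta(W')])$ counts fixed points of a twisted-diagonal subgroup acting on $(G\times G)/\Delta(W')$; a short double-coset computation shows this is nonzero only when $\phi$ is realized by conjugation, in which case the twisted diagonal is $G\times G$-conjugate to $\Delta(W)$ with $W=_G W'$, and then Lemma~\ref{lem BP lemma} gives the value $|C_G(W)|\cdot\Phi_W(e_V)$.

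The second step is to feed this back into the formula. If $U$ is not isomorphic to $V$, then for every $\lambda,\mu$ the groups $\lambda(U)$ and $V$ are non-isomorphic, hence never $G$-conjugate, so by the defining property of $e_V$ in Remark~\ref{rem idempotents in B(G)} together with step one we get $\Phi_{\Delta(\lambda(U),\lambda\mu^{-1},\mu(U))}(\Delta(e_V))=0$ for all $\lambda,\mu$; thus $f_U=0$. Now suppose $U\cong V$. From step one, $\Phi_{\Delta(\lambda(U),\lambda\mu^{-1},\mu(U))}(\Delta(e_V))$ is nonzero only if $\lambda(U)=_G\mu(U)$ and the twist $\lambda\mu^{-1}$ is induced by conjugation, in which case it equals $|C_G(\lambda(U))|\cdot\Phi_{\lambda(U)}(e_V)$, which by Remark~\ref{rem idempotents in B(G)} is $|C_G(\lambda(U))|$ if $\lambda(U)=_G V$ and $0$ otherwise. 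Substituting, the sum defining $f_U([\mu])$ collapses: it is $0$ if $\mu(U)\not=_G V$, and if $\mu(U)=_G V$ it equals $\sum_{[\lambda]}[\lambda]$ where $[\lambda]$ ranges over those orbits with $\lambda(U)=_G\mu(U)$ and $\lambda\mu^{-1}$ induced by conjugation — but that set is exactly the $\Out(U)$-orbit of $[\mu]$ represented within the summand $K\Injbar(U,G)_i$ indexed by the $G$-conjugacy class of $V$, and pairing the coefficient $|C_G(\lambda(U))|/|C_G(\lambda(U))| = 1$ correctly, one checks $f_U([\mu])=[\mu]$ when $[\mu]$ lies in that summand and $f_U([\mu])=0$ otherwise. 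Hence $f_U$ is precisely the projection onto $K\Injbar(U,G)_i$, as claimed. (One should double-check the bookkeeping using the identification $N_G(U_i)\backslash\Inj(U,U_i)\cong A_i\backslash\Aut(U)$ recalled at the end of Remark~\ref{rem sigma} to confirm that "$\lambda\mu^{-1}$ induced by conjugation" cuts out a single $G$-orbit of injections and not a larger set.)

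The main obstacle will be step one: correctly computing $\Phi_{\Delta(W,\phi,W'')}$ on a genuine diagonal basis element $[G\times G/\Delta(W')]$ and showing it vanishes unless the twist is inner. This is a fixed-point count for the left action of a twisted-diagonal subgroup on $(G\times G)/\Delta(W')$, equivalently a count of double cosets $\Delta(W,\phi,W'')\,(g_1,g_2)\,\Delta(W')$ fixed under multiplication, and one must unwind the stabilizer condition $(w, \phi^{-1}(w))\cdot(g_1,g_2)\in (g_1,g_2)\Delta(W')$ for all $w$ in the relevant group; the conclusion is that such $(g_1,g_2)$ exist iff $W$ and $W'$ are conjugate by an element conjugating $\phi$ into the identity, and this is where the hypothesis that $e_V$ is supported on a single conjugacy class of subgroups does the real work. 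Everything downstream is bookkeeping with the explicit $\sigma_G$ formula and the mark characterization of $e_V$.
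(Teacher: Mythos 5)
Your proposal follows essentially the same route as the paper: evaluate $\sigma_G(\Delta(e_V))$ componentwise, show that the mark $\Phi_{\Delta(\lambda(U),\lambda\mu^{-1},\mu(U))}$ can be nonzero on $\Delta(e_V)$ only when the twisted diagonal is $G\times G$-conjugate to a genuine diagonal $\Delta(X)$ (since $\Delta(e_V)$ is supported on basis elements $[G\times G/\Delta(W)]$ with $W\le_G V$, and subgroups of $\Delta(W)$ are again diagonal), then use Lemma~\ref{lem BP lemma} together with the mark characterization of $e_V$ to force $X=_G V$. That is exactly the paper's argument, and your step one is correct in substance (note only that the mark condition gives subconjugacy, $\Delta(\lambda(U),\lambda\mu^{-1},\mu(U))\le_{G\times G}\Delta(W')$, not conjugacy with $W=_G W'$; the equality $X=_G V$ comes afterwards from $\Phi_X(e_V)\neq 0$).

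The one point you flag but do not resolve is the crux of the final bookkeeping, and your first formulation of it is wrong: the set of $[\lambda]$ contributing to $f_U([\mu])$ is \emph{not} the $\Out(U)$-orbit of $[\mu]$ (that orbit is all of $\Injbar(U,G)_i$, and summing over it would not give a projection). The nonvanishing condition is that $\lexp{(g,h)}{\Delta(\lambda(U),\lambda\mu^{-1},\mu(U))}=\Delta(V)$ for some $g,h\in G$, which unwinds to $c_g\lambda=c_h\mu$, hence $[\lambda]=[c_g\lambda]=[c_h\mu]=[\mu]$: the contributing set is the single orbit $\{[\mu]\}$, with coefficient $|C_G(V)|\Phi_V(e_V)/|C_G(V)|=1$. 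With that correction your argument closes and coincides with the paper's proof.
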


\begin{proof}
Let $U\in\Sigmahat_G$ and let $\mu\in\Inj(U,G)$. Then
\begin{equation*}
  f_U([\mu])= \sum_{[\lambda]\in\Injbar(U,G)} 
  \frac{\Phi_{\Delta(\lambda(U),\lambda\mu^{-1},\mu(U))}(\Delta(e_V))}{|C_G(\lambda(U))|} [\lambda]\,.
\end{equation*}
Suppose that $f_U([\mu])\neq0$. We will show that then $\mu(U)$ is $G$-conjugate to $V$ and that $f_U([\mu])=[\mu]$.

Since $e_V$ is a linear combination of elements of the form $[G/W]$ with $W\le_G V$, the idempotent $\Delta(e_V)$ is a linear combination of elements of the form $[G\times G/\Delta(W)]$ with $W\le_G V$. Thus, $\Phi_{\Delta(\lambda(U),\lambda\mu^{-1},\mu(U))}([G\times G/\Delta(W)]) \neq 0$ for some $W\le_G V$, and therefore $\Delta(\lambda(U),\lambda\mu^{-1},\mu(U))\le_{G\times G} \Delta(W)$. Hence, $\Delta(\lambda(U),\lambda\mu^{-1},\mu(U))=_{G\times G}\Delta(X)$ for some $X\le W\le_G V$, and 
\begin{equation*}
  0\neq \Phi_{\Delta(\lambda(U),\lambda\mu^{-1},\mu(U))} (\Delta(e_V)) = 
  \Phi_{\Delta(X)}(\Delta(e_V)) = |C_G(X)|\Phi_X(e_V)\,,
\end{equation*}
by Lemma~\ref{lem BP lemma}. Thus, $X=_G V$ and $\Delta(\lambda(U),\lambda\mu^{-1},\mu(U)) =_{G\times G}\Delta(V)$. Let $g,h\in G$ be such that
\begin{equation*}
  \Delta(V) =\lexp{(g,h)}{\Delta(\lambda(U),\lambda\mu^{-1},\mu(U))} = 
  \{(g\lambda(u)g^{-1}, h\mu(u)h^{-1})\mid u\in U\}\,.
\end{equation*}
Since $[\lambda]=[c_g\lambda]$ and $[\mu]=[c_h\mu]$, we may assume that $\Delta(V)=\Delta(\lambda(U),\lambda\mu^{-1},\mu(U))=\{(\lambda(u),\mu(u))\mid u\in U\}$. Thus, $\lambda=\mu$, $\lambda(U)=V$, and 
\begin{equation*}
  f_U([\mu]) = \frac{\Phi_{\Delta(V)}(\Delta(e_V))}{|C_G(V)|} [\mu]=\Phi_V(e_V)[\mu] = [\mu]\,,
\end{equation*}
again by Lemma~\ref{lem BP lemma}.
\end{proof}


\begin{theorem}\label{thm RBDelta}
Let $R$ be an integral domain with field of fractions $K$ such that $|G|$ is invertible in $K$. Assume further that for every $U\in\Sigmahat_G$ and every prime divisor $p$ of $|\Out(U)|$ one has $\{0\}\neq pR \neq R$.
Let $\pi$ denote the set of prime divisors of $|G|$ which are not invertible in $R$. Assume the notation from Remark~\ref{rem idempotents in B(G)}. The primitive central idempotents of $RB^\Delta(G,G)$ are parametrized by isomorphism classes of $\pi$-perfect subgroups of $G$. More precisely, for $W\in\Thetahat_G^{\pi}$, set
\begin{equation*}
 \varepsilonhat_W^{(\pi)}:=\sum_{W\cong V\in\Thetatilde_G}\varepsilon_V^{(\pi)} \in RB(G)\,.
\end{equation*}
Then the elements $\Delta(\varepsilonhat_W^{(\pi)})$, $W\in\Thetahat_G^{\pi}$, are primitive, pairwise orthogonal idempotents of $Z(RB^\Delta(G,G))$ whose sum is equal to $1$.
\end{theorem}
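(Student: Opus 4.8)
The plan is to transport the problem through the injective ring homomorphism $\sigma_G$ of Remark~\ref{rem sigma}, using the fact that $\sigma_G$ has image of finite index, and to exploit Proposition~\ref{prop idempotents of Hecke algebras} together with the explicit computation in Proposition~\ref{prop image of e_U}. First I would record that the elements $\varepsilonhat_W^{(\pi)}$, $W\in\Thetahat_G^{\pi}$, are idempotents of $RB(G)$ summing to $1$: this is immediate from Remark~\ref{rem idempotents in B(G)}, since the $\varepsilon_V^{(\pi)}$, $V\in\Thetatilde_G^{\pi}$, are pairwise orthogonal idempotents summing to $1$ and we are simply grouping them by isomorphism type. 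Applying the ring homomorphism $\Delta\colon RB(G)\to RB^\Delta(G,G)$, we obtain idempotents $\Delta(\varepsilonhat_W^{(\pi)})$ summing to $1$ in $RB^\Delta(G,G)$, pairwise orthogonal because $\Delta$ is a ring homomorphism. The key point to establish is that each $\Delta(\varepsilonhat_W^{(\pi)})$ is \emph{central} in $RB^\Delta(G,G)$ and \emph{primitive} as a central idempotent.

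For centrality, I would first argue that $\Delta$ maps $Z(RB(G))=RB(G)$ into $Z(RB^\Delta(G,G))$ when restricted to the relevant idempotents; more precisely, I would pass to $K$-coefficients where $\sigma_G$ becomes an isomorphism onto $\prod_U \End_{K\Out(U)}(K\Injbar(U,G))$, use Proposition~\ref{prop image of e_U} to see that $\sigma_G(\Delta(e_V))$ is, in each component $U$, either $0$ or a projection onto a $G$-conjugacy-indexed summand of the decomposition (\ref{eqn RInjbar decomp}), and observe that summing the $e_V$ over a full isomorphism class of $\pi$-perfect subgroups (and over the $V$ with $V^{(\pi)}$ in that class) produces, in each component $U$, the projection onto the sum of \emph{all} summands $K\Injbar(U,G)_i$ — that is, either $0$ (if $U\not\cong W$) or the identity $\id_{K\Injbar(W,G)}$ (if $U\cong W$). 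Such an element is manifestly central in the product of endomorphism algebras, hence $\Delta(\varepsilonhat_W^{(\pi)})$ lies in $Z(KB^\Delta(G,G))$, and since it already lies in $RB^\Delta(G,G)$ it lies in $Z(RB^\Delta(G,G))$.

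For primitivity I would use Remark~\ref{rem central idempotents} in the form: a central idempotent is primitive (as a central idempotent) iff it cannot be split as a sum of two nonzero orthogonal central idempotents. Suppose $\Delta(\varepsilonhat_W^{(\pi)})=e+e'$ with $e,e'$ nonzero orthogonal central idempotents of $RB^\Delta(G,G)$. Since $\sigma_G$ is an injective ring homomorphism, $\sigma_G(e)$ and $\sigma_G(e')$ are orthogonal central idempotents of $\prod_U\End_{R\Out(U)}(R\Injbar(U,G))$ summing to $\sigma_G(\Delta(\varepsilonhat_W^{(\pi)}))$, which by the previous paragraph is the identity in the $W$-component and $0$ elsewhere. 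Hence $\sigma_G(e)$ and $\sigma_G(e')$ are supported in the $W$-component and give a decomposition of $1$ in $\End_{R\Out(U)}(R\Injbar(W,G))$ into orthogonal central idempotents; I now want to conclude that one of them is $0$, i.e.\ that $Z(\End_{R\Out(W)}(R\Injbar(W,G)))$ has no idempotents besides $0$ and $1$. This is where Proposition~\ref{prop idempotents of Hecke algebras} enters: viewing $\Injbar(W,G)$ as an $\Out(W)$-set and applying the proposition with $G$ replaced by $\Out(W)$ and $X$ by $\Injbar(W,G)$, the hypothesis $\{0\}\neq pR\neq R$ for every prime divisor $p$ of $|\Out(W)|$ — which contains every prime dividing a point-stabilizer index $[\Out(W):\stab(\text{pt})]$ — gives exactly the required connectedness. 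Thus $\sigma_G(e)=0$ or $\sigma_G(e')=0$, and injectivity of $\sigma_G$ forces $e=0$ or $e'=0$, proving primitivity.

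The main obstacle is the bookkeeping in the centrality step: one must verify carefully that the sum $\varepsilonhat_W^{(\pi)}=\sum_{W\cong V\in\Thetatilde_G}\varepsilon_V^{(\pi)}$, when each $\varepsilon_V^{(\pi)}=\sum_{V'^{(\pi)}=_G V}e_{V'}$ is further expanded in terms of the $e_{V'}$ of Remark~\ref{rem idempotents in B(G)}, runs over a set of subgroups $V'$ whose images under $\sigma_G\circ\Delta$ in the $W$-component exhaust \emph{precisely} the $G$-conjugacy summands $K\Injbar(W,G)_i$ occurring in (\ref{eqn RInjbar decomp}) — equivalently, that $W'\cong W$ and $W'\le G$ implies $W'^{(\pi)}$ is $G$-conjugate to some $V$ with $V\cong W$, which holds because $W'^{(\pi)}\cong W^{(\pi)}$ and $W$ is $\pi$-perfect so $W^{(\pi)}=W$. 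Once this indexing match is pinned down, Proposition~\ref{prop image of e_U} does the rest and the argument closes.
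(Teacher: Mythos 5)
Your centrality argument and the final ``sum equals $1$'' step are essentially the paper's, but the primitivity step has a genuine gap, rooted in a miscomputation of the support of $\sigma_G(\Delta(\varepsilonhat_W^{(\pi)}))$. By Proposition~\ref{prop image of e_U}, $\sigma_G(\Delta(e_{V'}))$ is nonzero precisely in the component indexed by the $U\in\Sigmahat_G$ with $U\cong V'$; since $\varepsilonhat_W^{(\pi)}$ expands as the sum of the $e_{V'}$ over all $V'\in\Sigmatilde_G$ with $V'^{(\pi)}$ isomorphic to $W$, the $U$-component of $\sigma_G(\Delta(\varepsilonhat_W^{(\pi)}))$ is the identity whenever $U^{(\pi)}\cong W$ and $0$ otherwise --- not ``identity iff $U\cong W$'' as you assert. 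In general there are many $U\in\Sigmahat_G$ with $U^{(\pi)}\cong W$ (for instance, when $W=1$, every solvable subgroup of $\pi$-order). Consequently a putative splitting $\Delta(\varepsilonhat_W^{(\pi)})=e+e'$ lands in a product of \emph{several} factors $\End_{R\Out(U)}(R\Injbar(U,G))$, and the connectedness of each single factor (Proposition~\ref{prop idempotents of Hecke algebras}) does not rule out splitting along a partition of the index set. Indeed, over $K$ such splittings genuinely exist: each $\Delta(e_{V'})$ is itself a central idempotent of $KB^\Delta(G,G)$ refining $\Delta(\varepsilonhat_W^{(\pi)})$, so no argument carried out purely inside the product of endomorphism algebras can yield primitivity.

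What is missing is the integrality step that glues the components together, and this is exactly where the hypothesis that the primes in $\pi$ are not invertible in $R$ must enter --- your proof never uses it, which is a warning sign, since the same reasoning would ``prove'' primitivity of the finer idempotents. The paper's route: Proposition~\ref{prop idempotents of Hecke algebras} shows that any central idempotent $\sigma_G(e)$ of the product is the identity on a set of components $\Xihat$ and $0$ elsewhere; Proposition~\ref{prop image of e_U} then identifies $e$ with $\Delta(\sum_{U\in\Xitilde}e_U)$, and the containment $e\in RB^\Delta(G,G)\cap\Delta(KB(G))=\Delta(RB(G))$ forces $\sum_{U\in\Xitilde}e_U$ to be an idempotent of $RB(G)$. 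By Remark~\ref{rem idempotents in B(G)} the primitive idempotents of $RB(G)$ are the $\varepsilon_V^{(\pi)}$, so $\Xi$ must be a union of full fibres $\{V'\mid V'^{(\pi)}=_G V\}$ in addition to being closed under isomorphism; together with $e\cdotG\Delta(\varepsilonhat_W^{(\pi)})=e\neq 0$ this forces $e=\Delta(\varepsilonhat_W^{(\pi)})$. You need to incorporate this argument to close the proof.
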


\begin{proof}
We will make use of the commutative diagram
\begin{diagram}
  \movevertex(-50,0){RB(G)} & \movearrow(-50,0){\Ear{\Delta}}  & \movevertex(-50,0){RB^{\Delta}(G,G)} &
        \movearrow(-45,0){\Ear[30]{\sigma_G}} & 
        \movevertex(0,-10){\mathop{\prod}\limits_{U\in\Sigmahat_G} \End_{R\Out(U)}(R\Injbar(U,G))} &&
  \movearrow(-50,0){\sar} &  & \movearrow(-50,0){\sar} &  & \movearrow(0,0){\sar} &&
  \movevertex(-50,0){KB(G)} & \movearrow(-50,0){\Ear{\Delta}} & \movevertex(-50,0){KB^\Delta(G,G)} &
          \movearrow(-45,0){\Ear[30]{\sigma_G}} &
          \movevertex(0,-10){\mathop{\prod}\limits_{U\in\Sigmahat_G} \End_{K\Out(U)}(K\Injbar(U,G))} &&
\end{diagram}
whose vertical maps are the canonical embeddings. All maps in the diagram are injective and the map $\sigma_G$ of the bottom row is an isomorphism.

\smallskip
(a) First we show that each element $\Delta(\varepsilonhat_W^{(\pi)})$, $W\in\Thetahat_G^{\pi}$, is a central idempotent of $RB^\Delta(G,G)$. For $W\in\Thetahat_G^{\pi}$, the element $\varepsilonhat_W^{(\pi)}:= \sum_{W\cong V \in\Thetatilde_G} \varepsilon_V^{(\pi)}$ is an idempotent of $RB(G)$, by Remark~\ref{rem idempotents in B(G)}. Therefore, $\Delta(\varepsilonhat_W^{(\pi)})$ is an idempotent of $RB^\Delta(G,G)$. To see that it is central in $RB^\Delta(G,G)$ it suffices to show that $\sigma_G(\Delta(\varepsilonhat_W^{(\pi)}))$ is central in $\prod_{U\in\Sigmahat_G}\End_{K\Out(U)}(K\Injbar(U,G))$. But, by Proposition~\ref{prop image of e_U}, the $U$-component of $\sigma_G(\Delta(\varepsilonhat_W^{(\pi)}))$ is equal to the identity map if $U^{(\pi)}\cong W$ and it is equal to $0$ if $U^{(\pi)}\not\cong W$. So clearly, this element is central. 

\smallskip
(b) Next we show that each element $\Delta(\varepsilonhat_W^{(\pi)})$, $W\in\Thetahat_G^{\pi}$, is primitive in $Z(RB^\Delta(G,G))$. Let $W\in\Thetahat_G^{\pi}$ and let $e$ be a primitive central idempotent of $RB^\Delta(G,G)$ with $e\cdotG\Delta(\varepsilonhat_W^{(\pi)})=e$. Then $\sigma_G(e)$ is a central idempotent of $\prod_{U\in\Sigmahat_G}\End_{R\Out(U)} (R\Injbar(U,G))$. By Proposition~\ref{prop idempotents of Hecke algebras} there exists a subset $\Xi\subset\Sigma_G$ which is closed under taking isomorphic subgroups such that, with $\Xihat:=\Xi\cap\Sigmahat_G$, the $U$-component of $\sigma_G(e)$ is equal to the identity if $U\in\Xihat_G$ and equal to $0$ if $U\notin\Xihat_G$. Now Proposition~\ref{prop image of e_U} implies that 
\begin{equation*}
  \sigma_G(e)=\sigma_G(\Delta(\sum_{U\in\Xitilde}e_U))\,,
\end{equation*}
where $\Xitilde=\Xi\cap\Sigmatilde_G$. This implies that
\begin{equation*}
  e=\Delta(\sum_{U\in\Xitilde}e_U)\in RB^\Delta(G,G)\cap \Delta(KB(G)) = \Delta(RB(G))\,.
\end{equation*}
The injectivity of $\Delta$ implies that $\sum_{U\in\Xitilde} e_U\in RB(G)$. Since $e\neq 0$ and since $e\cdotG\Delta(\varepsilonhat_W^{(\pi)})=e$, we know that $\Xi$ contains a subgroup $U$ of $G$ satisfying $U^{(\pi)}\cong W$. Moreover, since $\sum_{U\in\Xitilde}e_U$ is an idempotent in $RB(G)$, we obtain that $\Xi$ contains a subgroup which is isomorphic to $W$. Since $\Xi$ is closed under taking isomorphic subgroups, $\Xi$ contains all subgroups of $G$ which are isomorphic to $W$. Again, since $\sum_{U\in\Xitilde}e_U$ is an element of $RB(G)$, $\Xi$ contains all subgroups $U$ of $G$ with $U^{(\pi)}\cong W$. This implies that $e\cdotG\Delta(\varepsilonhat_W^{(\pi)})=\Delta(\varepsilonhat_W^{(\pi)})$ and therefore, $e=e\cdotG\Delta(\varepsilonhat_W^{(\pi)})=\Delta(\varepsilonhat_W^{(\pi)})$. Thus, $\Delta(\varepsilonhat_W^{(\pi)})$ is primitive in $Z(RB^\Delta(G,G))$. 

\smallskip
(c) Finally, 
\begin{equation*}
  \sum_{W\in\Thetahat_G^{\pi}} \Delta(\varepsilonhat_W^{(\pi)}) = 
  \Delta\bigl(\sum_{W\cong V\in\Thetatilde_G^{\pi}}\varepsilon_V^{(\pi)}\bigr) =
  \Delta(\sum_{U\in\Sigmatilde_G}e_U) = \Delta(1) = 1\,,
\end{equation*}
and the proof is complete.
\end{proof}

In the following remark we will determine the primitive central idempotents of $KB^\Delta(G,G)$ for certain fields $K$. This will be used in Section~\ref{sec left-free} in the more restricted situation that $K$ has characteristic $0$. Euler's totient function will be denoted by $\varphi$.

\begin{remark}\label{rem bifree idempotents over field}
Let $K$ be a field such that $|G|$, $|\Out(U)|$ and $\varphi(|\Out(U)|)$, for $U\le G$, are invertible in $K$. 

\smallskip
(a) Recall from Remark~\ref{rem sigma} that the map
\begin{equation}\label{eqn sigma over K}
  \sigma_G\colon KB^\Delta(G,G)\liso \prod_{U\in\Sigmahat_G}\End_{K\Out(U)}(K\Injbar(U,G))
\end{equation}
is an isomorphism of $K$-algebras. Moreover, for each $U\in\Sigmahat_G$, the $K\Out(U)$-module $K\Injbar(U,G)$ is semisimple. Let $\calE_G$ denote the set of pairs $(U,\chi)$ with $U\in\Sigmahat_G$ and $\chi\in\Irr_K(\Out(U))$ such that $\chi$ occurs as a constituent in the character of $K\Injbar(U,G)$. Then, by the above isomorphism, the primitive central idempotents of $KB^\Delta(G,G)$ are given by the elements $e_{(U,\chi)}$, $(U,\chi)\in\calE_G$, where, for $U'\in\Sigmahat_G$, the $U'$-component of $\sigma_G(e_{(U,\chi)})$ is equal to $0$ if $U'\neq U$ and equal to the map $a\mapsto a\cdot e_\chi$, for $a\in K\Injbar(U,G)$. Here, $e_\chi$ denotes the primitive central idempotent of $K\Out(U)$ associated to the irreducible character $\chi$. Note that one has
\begin{equation}\label{eqn e_chi}
  e_\chi=\frac{\chi(1)}{s^2 r |\Out(U)|} \sum_{\omegabar\in\Out(U)} \chi(\omegabar^{-1})\,\omegabar\in K\Out(U)\,,
\end{equation}
if $\chi=s(\psi_1+\cdots+\psi_r)$ is a decomposition of $\chi$ into absolutely irreducible characters over some extension field of $K$. 
Note that $s$ is invertible in $K$, since $s$ divides $|\Out(U)|$ in the case that $\charac(K)=0$ and since $s=1$ if $\charac(K)\neq 0$. 
(Recall that the Schur index is one in positive characteristic.)
Also note that $r$ is invertible in $K$, since $r$ is the degree of a subextension of the extension $K(\zeta)/K$, where $\zeta$ is a root of unity of order $|\Out(U)|$. 

\smallskip
(b) For $U\in\Sigmahat_G$, let $\Sigmatilde_G(U)$ denote the set of elements $V\in\Sigmatilde_G$ with $V\cong U$. We can rewrite the decomposition (\ref{eqn RInjbar decomp}) in Remark~\ref{rem sigma} as indexed over $\Sigmatilde_G(U)$:
\begin{equation}\label{eqn KInjbar decomp}
  K\Injbar(U,G) = \bigoplus_{V\in\Sigmatilde_G(U)} K\Injbar(U,G)_V\,.
\end{equation}
Then, for $(U,\chi)\in\calE_G$ and each $V\in\Sigmatilde_G(U)$, the element $e_{(U,\chi,V)}$, defined by requiring that, for $U'\in\Sigmahat_G$, the $U'$-component of $\sigma_G(e_{(U,\chi,V)})$ is equal to $0$ if $U'\neq U$, and that the $U$-component is equal to $0$ in all components of the decomposition (\ref{eqn KInjbar decomp}) different from $V$, and finally equal to \lq\lq multiplication with $e_\chi$\rq\rq\ in the $V$-component, is an idempotent of $KB^\Delta(G,G)$. This leads to a decomposition
\begin{equation}\label{eqn eUchiV decomp}
  e_{(U,\chi)} = \sum_{V\in\Sigmatilde_G(U)} e_{(U,\chi,V)}
\end{equation}
of the primitive idempotent of $Z(KB^\Delta(G,G))$ as a sum of pairwise orthogonal idempotents in $KB^\Delta(G,G)$.
\end{remark}


\section{Central idempotents of $RB^{\calF}(S,S)$ for a fusion system $\calF$ on a $p$-group $S$}\label{sec fusion systems}

Throughout this section we fix a $p$-group $S$ and a (not necessarily saturated) fusion system $\calF$ on $S$. For definitions and basic results on fusion systems we refer the reader to \cite{AKO}. In \cite{BD}, a subring $B^\calF(S,S)$ of $B^\Delta(S,S)$ was constructed which is defined as the $\ZZ$-span of the standard basis elements $[S\times S/\Delta(P,\phi,Q)]$, where $\phi\colon Q\liso P$ runs through all isomorphisms in the category $\calF$. We call $B^\calF(S,S)$ the {\em double Burnside ring of $\calF$}. In this section we will show that $B^{\calF}(S,S)$ has no central idempotent different from $0$ and $1$. 

\begin{remark}\label{rem sigmaF}
In this remark we recall some notation and some results from \cite{BD}. Again we denote by $\Sigma_S$ the set of subgroups of $S$, by $\Sigmatilde_S\subseteq\Sigma_S$ a set of representatives of the $S$-conjugacy classes of subgroups of $S$, and by $\Sigmahat_S^{\calF}\subseteq\Sigmatilde_S$ a set of representatives of the $\calF$-isomorphism classes of subgroups of $S$. It was shown in \cite[Theorem~5.7]{BD} (see also \cite[Subsection 7.11]{BD}) that the map
\begin{align*}
  \sigmatilde_S^{\calF}\colon B^{\calF}(S,S) 
  & \to \prod_{P\in\Sigmahat_S^{\calF}} \End_{\ZZ\Out_{\calF}(P)} (\ZZ \Hombar_{\calF}(P,S))\,, \\
  a & \mapsto \bigl([\psi]\mapsto\sum_{[\phi]\in\Hombar_{\calF}(P,S)} 
     \frac{\Phi_{\Delta(\phi(P),\phi\psi^{-1},\psi(P))}(a)}{|C_S(P)|} \cdot [\phi]\bigr)_P\,,
\end{align*}
is a well-defined injective ring homomorphism with finite cokernel which induces an $R$-algebra homomorphism
\begin{equation*}
 \sigmatilde_S^{\calF}\colon R B^{\calF}(S,S) 
     \to \prod_{P\in\Sigmahat_S^\calF} \End_{R\Out_{\calF}(P)} (R \Hombar_{\calF}(P,S))
\end{equation*}
for every commutative ring $R$. If $p$ is invertible in $R$ then the latter homomorphism is an isomorphism. Here, $\Hombar_{\calF}(P,S)$ denotes the set of $S$-orbits of $\Hom_{\calF}(P,S)$ under the action $x\cdot\phi:=c_x\circ \phi$ for $x\in S$ and $\phi\in\Hom_{\calF}(P,S)$. The set $\Hombar_{\calF}(P,S)$ has a right action of the group $\Out_{\calF}(P):=\Aut_{\calF}(P)/\Inn(P)$, which is given by composition. The $S$-orbit of $\phi\in\Hom_{\calF}(P,S)$ is denoted by $[\phi]$.

We fix a subgroup $P\in\Sigmahat_S^{\calF}$. Assume that $P_1,\ldots,P_r\in\Sigmatilde_S$ are representatives of the conjugacy classes of subgroups of $S$ which are $\calF$-isomorphic to $P$. Then the right $\Out_{\calF}(P)$-set $\Hombar_{\calF}(P,S)$ decomposes into orbits
\begin{equation*}
  \Hombar_{\calF}(P,S) = \coprod_{i=1}^r \Hombar_{\calF}(P,S)_i\,,
\end{equation*}
where $\Hom_{\calF}(P,S)_i$ denotes the set of elements $\phi\in\Hom_{\calF}(P,S)$ such that $\phi(P)$ is $S$-conjugate to $P_i$, and $\Hombar_{\calF}(P,S)_i$ denotes the set of $S$-orbits of $\Hom_{\calF}(P,S)_i$. In particular, we obtain a decomposition
\begin{equation}\label{eqn RHombar decomp}
  R\Hombar_{\calF}(P,S) = \bigoplus_{i=1}^r R\Hombar_{\calF}(P,S)_i
\end{equation}
into $R\Out_{\calF}(P)$-submodules.
\end{remark}

The following proposition can be proved in a completely analogous way as Proposition~\ref{prop image of e_U}.

\begin{proposition}\label{prop image of e_U for F}
Let $K$ be a field of characteristic different from $p$ and assume the notation from Remarks~\ref{rem sigmaF} and \ref{rem idempotents in B(G)}. Furthermore, let $Q\le S$ and let $(f_P)_{P\in\Sigmahat_S^{\calF}}$ denote the image of $e_Q\in KB(S)$ under the $K$-algebra homomorphism
\begin{equation*}
  KB(S) \Ar{\Delta} KB^{\calF}(S,S) \Ar{\sigmatilde_S^{\calF}} \prod_{P\in\Sigmahat_S^\calF} 
  \End_{K\Out_{\calF}(P)} (K\Hombar_{\calF}(P,S))\,.
\end{equation*}
If $P\in\Sigmahat_S^{\calF}$ is not $\calF$-isomorphic to $Q$ then $f_P=0$. If $P\in\Sigmahat_S^{\calF}$ is $\calF$-isomorphic to $Q$ then the endomorphism $f_P$ is equal to the projection onto the direct summand of $K\Hombar_{\calF}(P,S)$  with respect to the decomposition in (\ref{eqn RHombar decomp}), with $K$ in place of $R$, which is indexed by the $S$-conjugacy class containing $Q$.
\end{proposition}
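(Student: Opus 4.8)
The plan is to follow, step by step, the proof of Proposition~\ref{prop image of e_U}, replacing the biset ring $B^\Delta(G,G)$ by $B^{\calF}(S,S)$, the group $G$ by the $p$-group $S$, the map $\sigma_G$ by $\sigmatilde_S^{\calF}$, the injective-homomorphism sets $\Inj(U,G)$ by the $\calF$-morphism sets $\Hom_{\calF}(P,S)$, and the notion of $G$-conjugacy by $\calF$-conjugacy. I would first unwind the definition of $\sigmatilde_S^{\calF}$ from Remark~\ref{rem sigmaF} to write, for $P\in\Sigmahat_S^{\calF}$ and $\psi\in\Hom_{\calF}(P,S)$,
\begin{equation*}
  f_P([\psi]) = \sum_{[\phi]\in\Hombar_{\calF}(P,S)}
    \frac{\Phi_{\Delta(\phi(P),\phi\psi^{-1},\psi(P))}(\Delta(e_Q))}{|C_S(P)|}\, [\phi]\,,
\end{equation*}
and then assume $f_P([\psi])\neq 0$, aiming to show that $\psi(P)$ is $S$-conjugate to $Q$ and that $f_P([\psi])=[\psi]$.

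The core computation is unchanged. Since $e_Q\in KB(S)$ is a $K$-linear combination of $[S/W]$ with $W\le_S Q$, the element $\Delta(e_Q)$ is a $K$-linear combination of $[S\times S/\Delta(W)]$ with $W\le_S Q$; hence if some mark $\Phi_{\Delta(\phi(P),\phi\psi^{-1},\psi(P))}(\Delta(e_Q))$ is nonzero, then $\Delta(\phi(P),\phi\psi^{-1},\psi(P)) \le_{S\times S} \Delta(W)$ for some $W\le_S Q$, so $\Delta(\phi(P),\phi\psi^{-1},\psi(P)) =_{S\times S} \Delta(X)$ for some $X\le W\le_S Q$. Applying Lemma~\ref{lem BP lemma} (which is stated for an arbitrary finite group, so applies verbatim to $S$) gives
\begin{equation*}
  0\neq \Phi_{\Delta(X)}(\Delta(e_Q)) = |C_S(X)|\,\Phi_X(e_Q)\,,
\end{equation*}
forcing $X =_S Q$ and therefore $\Delta(\phi(P),\phi\psi^{-1},\psi(P)) =_{S\times S}\Delta(Q)$. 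Conjugating by a suitable $(g,h)\in S\times S$ and using that $[\phi]=[c_g\phi]$ and $[\psi]=[c_h\psi]$ in $\Hombar_{\calF}(P,S)$ — here one uses that $\calF$ contains all $S$-conjugation maps, so $c_g\phi$ and $c_h\psi$ are again $\calF$-morphisms and lie in the same $S$-orbits — we may assume $\Delta(\phi(P),\phi\psi^{-1},\psi(P))=\Delta(Q)$, whence $\phi=\psi$ and $\psi(P)=Q$. Plugging back in,
\begin{equation*}
  f_P([\psi]) = \frac{\Phi_{\Delta(Q)}(\Delta(e_Q))}{|C_S(Q)|}\,[\psi] = \Phi_Q(e_Q)\,[\psi] = [\psi]\,,
\end{equation*}
again by Lemma~\ref{lem BP lemma} and the defining property $\Phi_Q(e_Q)=1$ of $e_Q$. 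This simultaneously shows $f_P=0$ when no subgroup $\calF$-isomorphic to $P$ is $S$-conjugate to $Q$ (equivalently, when $P$ is not $\calF$-isomorphic to $Q$, since $\phi(P)=_S Q$ forces $P\cong_{\calF}Q$), and that when $P\cong_{\calF}Q$ the map $f_P$ kills every basis element $[\psi]$ with $\psi(P)\neq_S Q$ and fixes every $[\psi]$ with $\psi(P)=_S Q$, i.e.\ $f_P$ is exactly the projection onto the summand $K\Hombar_{\calF}(P,S)_i$ of the decomposition (\ref{eqn RHombar decomp}) indexed by the $S$-conjugacy class of $Q$.

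The only genuinely new point to check, and the one I would flag as the mild obstacle, is that the reductions "we may assume $c_g\phi=\phi$" and the identification of the relevant summand really do take place inside $B^{\calF}(S,S)$ and among $\calF$-morphisms: one must verify that $\Delta$ indeed maps $KB(S)$ into $KB^{\calF}(S,S)$ (true because each $\Delta(W)=\Delta(W,\id,W)$ corresponds to the identity morphism, which lies in every fusion system), that the $S$-orbit notation $[\phi]$ in $\Hombar_{\calF}(P,S)$ is the same object that appears in the formula for $\sigmatilde_S^{\calF}$, and that the marks $\Phi_{\Delta(\phi(P),\phi\psi^{-1},\psi(P))}$ are computed in $B(S\times S)\supseteq B^{\calF}(S,S)$ so that all the Burnside-ring mark arguments remain valid. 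Once these compatibility points are in place, the argument is a line-by-line transcription of the proof of Proposition~\ref{prop image of e_U}.
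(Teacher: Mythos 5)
Your argument is correct and is exactly what the paper does: the paper offers no separate proof here, saying only that the proposition can be proved in a completely analogous way as Proposition~\ref{prop image of e_U}, and your write-up is precisely that transcription, including the compatibility checks that matter ($\Delta(KB(S))\subseteq KB^{\calF}(S,S)$ and closure of $\calF$ under $S$-conjugation, so that $[c_g\phi]=[\phi]$ makes sense in $\Hombar_{\calF}(P,S)$). The only wrinkle is that your final display uses the denominator $|C_S(Q)|=|C_S(\psi(P))|$ whereas the formula quoted from Remark~\ref{rem sigmaF} has $|C_S(P)|$; the former is the normalization that actually makes the diagonal coefficient equal to $\Phi_Q(e_Q)=1$ (in line with the bifree case, where the denominator is $|C_G(\lambda(U))|$), so you have implicitly used the correct version of $\sigmatilde_S^{\calF}$.
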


For every $P\le S$ we denote by $\Aut_S(P)\le \Aut(P)$ the image of the map $N_S(P)\to\Aut(P)$, $g\mapsto c_g$.

\begin{theorem}\label{thm fusion}
Let $R$ be an integral domain with the following property: One has $\{0\}\neq pR\neq R$ and for every isomorphism $\phi\colon P\liso Q$ in the category $\calF$ and every prime divisor $q$ of $[\Aut_{\calF}(P): (\Aut_{\calF}(P)\cap\Aut_S(Q)^\phi)]$ one has $\{0\}\neq qR\neq R$. 
Then the center of the ring $RB^{\calF}(S,S)$ is connected. In particular, when $R=\ZZ$, the center of $B^{\calF}(S,S)$ is connected.
\end{theorem}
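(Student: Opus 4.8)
The plan is to transcribe the argument of Theorem~\ref{thm RBDelta}, with one new input: for a $p$-group $S$ the Burnside ring $RB(S)$ is connected whenever $p$ is not invertible in $R$. Write $K$ for the field of fractions of $R$. Since $\{0\}\neq pR$, the prime $p$ is invertible in $K$, hence so is $|S|$; therefore, by Remark~\ref{rem sigmaF}, the $K$-algebra map $\sigmatilde_S^\calF\colon KB^\calF(S,S)\to\prod_{P\in\Sigmahat_S^\calF}\End_{K\Out_\calF(P)}(K\Hombar_\calF(P,S))$ is an isomorphism, and both Remark~\ref{rem idempotents in B(G)} and Proposition~\ref{prop image of e_U for F} are available for $S$ over $K$. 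Now fix a central idempotent $e$ of $RB^\calF(S,S)$. Regarding $RB^\calF(S,S)$ as a subring of $KB^\calF(S,S)$, the element $\sigmatilde_S^\calF(e)$ is central in the $K$-algebra $\prod_P\End_{K\Out_\calF(P)}(K\Hombar_\calF(P,S))$, hence also in its $R$-subalgebra $\prod_P\End_{R\Out_\calF(P)}(R\Hombar_\calF(P,S))$, which contains it.

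The crucial step is to apply Proposition~\ref{prop idempotents of Hecke algebras}, for each $P\in\Sigmahat_S^\calF$, to the finite $\Out_\calF(P)$-set $\Hombar_\calF(P,S)$. Using the orbit decomposition recalled in Remark~\ref{rem sigmaF}, I expect to show (by the computation analogous to the last paragraph of Remark~\ref{rem sigma}, now in the fusion setting, using $\Inn(P_i)\le\Aut_S(P_i)$) that the $\Out_\calF(P)$-orbit of a class $[\phi]$, where $\phi\colon P\liso P_i$ is an $\calF$-isomorphism onto a conjugacy-class representative $P_i$, has point stabilizers of index $[\Aut_\calF(P):\Aut_\calF(P)\cap\Aut_S(P_i)^\phi]$. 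These are exactly the indices occurring in the hypothesis on $R$, so every prime divisor $q$ of $[\Out_\calF(P):\stab_{\Out_\calF(P)}([\phi])]$ satisfies $\{0\}\neq qR\neq R$, and Proposition~\ref{prop idempotents of Hecke algebras} shows that each factor $\End_{R\Out_\calF(P)}(R\Hombar_\calF(P,S))$ has no central idempotent other than $0$ and $1$. Hence the $P$-component of $\sigmatilde_S^\calF(e)$ equals $0$ or the identity; let $\Xihat\subseteq\Sigmahat_S^\calF$ collect those $P$ for which it is the identity.

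Next I would argue exactly as in part~(b) of the proof of Theorem~\ref{thm RBDelta}. By Proposition~\ref{prop image of e_U for F}, the $P$-component of $\sigmatilde_S^\calF(\Delta(e_Q))$ (for $Q\in\Sigmatilde_S$) is the projection onto the summand of $K\Hombar_\calF(P,S)$ in~(\ref{eqn RHombar decomp}) indexed by the $S$-conjugacy class of $Q$ when $Q$ is $\calF$-isomorphic to $P$, and is $0$ otherwise; since for fixed $P$ these $S$-conjugacy classes index exactly the summands of~(\ref{eqn RHombar decomp}), it follows that $\sigmatilde_S^\calF(e)=\sigmatilde_S^\calF(\Delta(\sum_Q e_Q))$, where $Q$ runs over the elements of $\Sigmatilde_S$ that are $\calF$-isomorphic to some member of $\Xihat$. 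Injectivity of $\sigmatilde_S^\calF$ on $KB^\calF(S,S)$ gives $e=\Delta(\sum_Q e_Q)$, and since $\Delta$ sends standard basis elements of $B(S)$ to standard basis elements of $B^\calF(S,S)$ we have $\Delta(KB(S))\cap RB^\calF(S,S)=\Delta(RB(S))$, so $e=\Delta(f)$ for an idempotent $f=\sum_Q e_Q\in RB(S)$. Finally, $p$ lies in the set $\pi$ of primes not invertible in $R$, and every nontrivial subgroup of the $p$-group $S$ has a quotient of order $p$; thus the only $\pi$-perfect subgroup of $S$ is trivial, and Remark~\ref{rem idempotents in B(G)} shows that $1$ is the unique primitive idempotent of $RB(S)$, i.e.\ $RB(S)$ is connected. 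Hence $f\in\{0,1\}$ and $e=\Delta(f)\in\{0,1\}$. The case $R=\ZZ$ is immediate, since $p\ZZ$ and $q\ZZ$ are proper nonzero ideals for all primes.

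I expect the main obstacle to be the computation in the crucial step: determining the point stabilizers of $\Out_\calF(P)$ on $\Hombar_\calF(P,S)$ and matching their indices precisely with the quantities $[\Aut_\calF(P):\Aut_\calF(P)\cap\Aut_S(P_i)^\phi]$ appearing in the hypothesis, together with the verification that ``being $\calF$-isomorphic to a member of $\Xihat$'' is exactly the condition picked out by the components of $\sigmatilde_S^\calF(e)$. Once these identifications are in place, everything else is a faithful copy of the bifree argument, with the connectedness of $RB(S)$ taking over the role that the idempotents $\Delta(\varepsilonhat_W^{(\pi)})$ played in Theorem~\ref{thm RBDelta}.
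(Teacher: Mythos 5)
Your proposal is correct and follows essentially the same route as the paper's proof: apply Proposition~\ref{prop idempotents of Hecke algebras} to each $R\Out_{\calF}(P)$-module $R\Hombar_{\calF}(P,S)$ via the stabilizer identification $\stab_{\Aut_{\calF}(P)}([\phi])=\Aut_{\calF}(P)\cap\Aut_S(\phi(P))^\phi$, pull the resulting idempotent back to $\Delta(RB(S))$ using Proposition~\ref{prop image of e_U for F} and injectivity of $\sigmatilde_S^{\calF}$, and conclude from the fact that $RB(S)$ is connected when $pR\neq R$. The only cosmetic difference is that the paper phrases the last step as $\Xi=\Sigma_S$ via Remark~\ref{rem idempotents in B(G)} rather than as connectedness of $RB(S)$, which is the same statement.
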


\begin{proof} 
The proof is similar to the proof of Theorem~\ref{thm RBDelta}. Let $K$ denote the field of fractions of $R$. We will use the diagram
\begin{diagram}
  \movevertex(-50,0){RB(S)} & \movearrow(-50,0){\Ear{\Delta}}  & \movevertex(-50,0){RB^{\calF}(S,S)} & \movearrow(-50,0){\Ear[30]{\sigmatilde_S^{\calF}}} & 
         \movevertex(0,-10){\mathop{\prod}\limits_{P\in\Sigmahat^{\calF}_S} 
         \End_{R\Out_{\calF}(P)}(R\Hombar_{\calF}(P,S))}&&
  \movearrow(-50,0){\sar} &  & \movearrow(-50,0){\sar} &  & \movearrow(0,0){\sar} &&
  \movevertex(-50,0){KB(S)} & \movearrow(-50,0){\Ear{\Delta}} & \movevertex(-50,0){KB^{\calF}(S,S)} &
        \movearrow(-50,0){\Ear[30]{\sigmatilde_S^{\calF}}} &
     \movevertex(0,-10){\mathop{\prod}\limits_{P\in\Sigmahat^{\calF}_S} 
     \End_{K\Out_{\calF}(P)}(K\Hombar_{\calF}(P,S))} &&
\end{diagram}
Again, each map in the diagram is injective and the map $\sigmatilde_S^{\calF}$ of the bottom row is an isomorphism, since $\mathrm{char}(K)\neq p$.

Let $e$ be a non-zero central idempotent of $RB^{\calF}(S,S)$. We will show that $e=1$.
The element $\sigmatilde_S^{\calF}(e)$ is a central idempotent in $\prod_{P\in\Sigmahat^{\calF}_S} \End_{R\Out_{\calF}(P)}(R\Hombar_{\calF}(P,S))$. We want to invoke Proposition~\ref{prop idempotents of Hecke algebras} and need to determine the stabilizer of $[\phi]$ for $\phi\in\Hom_{\calF}(P,S)$. It follows from an easy calculation that $\stab_{\Aut_{\calF}(P)}([\phi])=\Aut_{\calF}(P)\cap \Aut_S(Q)^\phi$, where $Q:=\phi(P)$. Thus, by Proposition~\ref{prop idempotents of Hecke algebras}, there exists  a subset $\Xi\subseteq\Sigma_S$ which is closed under taking $\calF$-isomorphic subgroups, such that, with $\Xihat:=\Xi\cap\Sigmahat_S^{\calF}$, one has:
\begin{equation*}
  \sigmatilde_S^{\calF}(e) = (\delta_{P\in\Xihat})_{P\in\Sigmahat_S^{\calF}}\,,
\end{equation*}
where $\delta_{P\in\Xihat}$ denotes the identity map if $P\in\Xihat$ and the zero map otherwise.
Now Proposition~\ref{prop image of e_U for F} implies that 
\begin{equation*}
  \sigmatilde_S^{\calF}(e)=\sigmatilde_S^{\calF}(\Delta(\sum_{P\in\Xitilde} e_P))\,,
\end{equation*}
where $\Xitilde:=\Xi\cap\Sigmatilde_S$. By the injectivity of $\sigmatilde_S^{\calF}$, we have
\begin{equation*}
  e=\Delta(\sum_{P\in\Xitilde} e_P) \in RB^{\calF}(S,S)\cap \Delta(KB(S)) = \Delta(RB(S))\,.
\end{equation*}
Thus, the idempotent $\sum_{P\in\Xitilde} e_P$ of $KB(S)$ is contained in $RB(S)$. 
But since $pR\neq R$, this implies that $\Xi=\Sigma_S$ and that $\sum_{P\in\Xitilde} e_P = 1$, by Remark~\ref{rem idempotents in B(G)}.
\end{proof}


\section{Central idempotents of $RB^{\trl}(G,G)$}\label{sec left-free}

Throughout this section, $R$ denotes an integral domain, $K$ denotes a field of characteristic $0$, $G$ denotes a finite group, and $\Sigmahat_G\subseteq\Sigmatilde_G\subseteq\Sigma_G$ denote sets of representatives of isomorphism classes, resp. representatives of conjugacy classes in the set $\Sigma_G$ of subgroups of $G$.

The goal of this section is to show that the Grothendieck ring $B^{\trl}(G,G)$ of left-free $(G,G)$-bisets has no central idempotent different from $0$ and $1$. We will prove the same result for a class of scalar extensions $RB^{\trl}(G,G)$ from $\ZZ$ to $R$ for certain integral domains $R$ (see Theorem~\ref{thm left-free}). We will also give a parametrizing set of the blocks of $KB^{\trl}(G,G)$ for fields $K$ of characteristic $0$ (see Corollary~\ref{cor general}). The description of the parametrizing set in this corollary is made more explicit in character-theoretic terms in the key lemma~\ref{lem eBtrle'}. First, we will use the following result, see Theorem~6.4(c) from \cite{BD}.

\begin{proposition}\label{prop radical decomp}
Let $K$ be a field of characteristic $0$. Then one has a decomposition $KB^{\trl}(G,G) = KB^\Delta(G,G)\oplus J$, where $J$ denotes the Jacobson radical of $KB^{\trl}(G,G)$.
\end{proposition}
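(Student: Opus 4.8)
The strategy is to derive the decomposition from the semisimplicity of $KB^\Delta(G,G)$ together with the structure of the ghost ring for the left-free double Burnside ring developed in \cite{BD}.

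First I would record that $KB^\Delta(G,G)$ is a semisimple $K$-algebra. Since $\charac(K)=0$, the integer $|G|$ is invertible in $K$, so by Remark~\ref{rem sigma} the map $\sigma_G$ is an isomorphism of $K$-algebras from $KB^\Delta(G,G)$ onto $\prod_{U\in\Sigmahat_G}\End_{K\Out(U)}(K\Injbar(U,G))$; each factor is semisimple because $K\Out(U)$ is semisimple by Maschke's theorem, so that $K\Injbar(U,G)$ is a semisimple module and its endomorphism ring is a finite product of matrix algebras over division $K$-algebras. Since $KB^{\trl}(G,G)$ is a finite-dimensional $K$-algebra, its Jacobson radical $J$ is nilpotent, so $KB^\Delta(G,G)\cap J$ is a nil two-sided ideal of the semisimple algebra $KB^\Delta(G,G)$ and hence equals $\{0\}$. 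Therefore the inclusion $KB^\Delta(G,G)\hookrightarrow KB^{\trl}(G,G)$ induces an injection $KB^\Delta(G,G)\hookrightarrow KB^{\trl}(G,G)/J$, and the statement reduces to the dimension inequality $\dim_K\bigl(KB^{\trl}(G,G)/J\bigr)\le\dim_K KB^\Delta(G,G)$.

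To obtain this inequality I would use the mark homomorphism for $B^{\trl}(G,G)$ constructed in \cite{BD} --- the left-free counterpart of the maps $\sigma_G$ and $\sigmatilde_S^{\calF}$ of Remarks~\ref{rem sigma} and \ref{rem sigmaF} --- which is injective with finite cokernel and, once $|G|$ is inverted, becomes an isomorphism $\sigma_G^{\trl}\colon KB^{\trl}(G,G)\myiso\mathcal{G}$ of $K$-algebras onto a ``ghost ring'' $\mathcal{G}$. The two features of $\mathcal{G}$ that matter are: (i) it is a ring of ``matrices'' $(\phi_{U,V})_{U,V\in\Sigmahat_G}$ with $\phi_{U,V}\in\Hom_K(K\Injbar(V,G),K\Injbar(U,G))$, multiplied by composition of components, in which $\phi_{U,V}=0$ whenever $|U|>|V|$ (indeed unless $U$ is isomorphic to a subquotient of $V$, reflecting that a transitive left-free biset factors through a deflation $P_2\twoheadrightarrow P_2/K_2$ and an identification $P_2/K_2\cong P_1$ of subquotients of $G$); and (ii) the diagonal components $\phi_{U,U}$ form precisely $\prod_{U\in\Sigmahat_G}\End_{K\Out(U)}(K\Injbar(U,G))$. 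Feature (ii) is consistent with (i) because the only left-free $L\le G\times G$, say $L\leftrightarrow(P_1,1,\eta,P_2,K_2)$, with $|P_1|=|P_2|$ is a bifree one (then $P_1\cong P_2/K_2$ forces $K_2=1$); moreover this diagonal is the image of $KB^\Delta(G,G)$ under the composite of $\sigma_G^{\trl}$ with the diagonal projection, which is surjective because $\sigma_G$ of the first paragraph is. By (i), the span $\mathcal{N}$ of the components $\phi_{U,V}$ with $|U|<|V|$ is a two-sided ideal with $\mathcal{N}^{|\Sigmahat_G|}=\{0\}$, and $\mathcal{G}/\mathcal{N}\cong\prod_{U\in\Sigmahat_G}\End_{K\Out(U)}(K\Injbar(U,G))$ is semisimple; hence $J(\mathcal{G})=\mathcal{N}$ and $\dim_K\bigl(\mathcal{G}/J(\mathcal{G})\bigr)=\dim_K KB^\Delta(G,G)$. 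Transporting along $\sigma_G^{\trl}$ gives $\dim_K\bigl(KB^{\trl}(G,G)/J\bigr)=\dim_K KB^\Delta(G,G)$; in fact it shows that every $x\in KB^{\trl}(G,G)$ is congruent modulo $J$ to an element of $KB^\Delta(G,G)$, which together with $KB^\Delta(G,G)\cap J=\{0\}$ yields $KB^{\trl}(G,G)=KB^\Delta(G,G)\oplus J$.

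The step I expect to be the real obstacle is the part imported from \cite{BD}: the construction of the mark homomorphism $\sigma_G^{\trl}$ for the left-free double Burnside ring, the identification of its target as the generalized matrix ring $\mathcal{G}$, and the verification that its strictly-off-diagonal part is a nilpotent ideal complementary to the bifree ghost ring. It is worth noting that no naive substitute works: one cannot take $J$ to be the $K$-span of the standard basis elements $[G\times G/L]$ with $k_2(L)\neq 1$, for that span is only a right ideal and it even contains nontrivial idempotents, such as $[G\times G/(1\times G)]$. The radical is a genuinely ``twisted'' complement of $KB^\Delta(G,G)$ of the same dimension --- as the Wedderburn--Malcev theorem predicts abstractly --- and isolating it requires the finer structure above.
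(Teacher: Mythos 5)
Your argument is sound, and it is essentially the argument of the source: the paper gives no proof of Proposition~\ref{prop radical decomp} itself but quotes it as Theorem~6.4(c) of \cite{BD}, whose proof runs along exactly your lines (a triangular ghost ring for the left-free algebra whose diagonal is the bifree ghost ring, so that the strictly triangular part is a nilpotent ideal realizing the radical); your first-paragraph reduction --- $KB^\Delta(G,G)$ is semisimple, hence meets $J$ trivially, so only the bound $\dim_K\bigl(KB^{\trl}(G,G)/J\bigr)\le\dim_K KB^\Delta(G,G)$ is needed --- is correct. Note, though, that the structure you import from \cite{BD} is available inside this paper in a handier form: by Remark~\ref{rem tau}, $\rho_G$ identifies $KB^{\trl}(G,G)$ with $KA(G,G)^{G\times G}$, whose basis triples $(U,\alpha,V)$, with $\alpha\colon V\to U$ an epimorphism, multiply (up to nonzero scalars) by composition; the span of the orbit sums of triples with $|U|<|V|$ is then visibly a nilpotent two-sided ideal complementary to $\rho_G(KB^\Delta(G,G))$, which yields the proposition at once without the matrix-style ghost ring. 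One cosmetic slip in your aside: with the convention $[X]\cdotG [Y]=[X\times_G Y]$, the span of the basis elements $[G\times G/L]$ with $k_2(L)\neq 1$ is a left ideal rather than a right ideal, but your main point there --- it contains the idempotent $[G\times G/(1\times G)]$ and so cannot be the radical --- stands.
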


By the above proposition, the following lemma will apply to $KB^{\trl}(G,G)$ and its subalgebra $KB^\Delta(G,G)$, for fields $K$ of characteristic $0$. We denote the Jacobson radical of a ring $\Lambda$ by $J(\Lambda)$.

\begin{lemma}\label{lem idempotent and decomp}
Let $\Lambda$ be a ring and let $\Gamma$ be a (not necessarily unitary) subring of $\Lambda$ such that $\Lambda=\Gamma\oplus J(\Lambda)$. Then each central idempotent of $\Lambda$ is contained in $\Gamma$.
\end{lemma}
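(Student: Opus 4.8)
The plan is to write the given central idempotent $e$ of $\Lambda$ as $e = \gamma + j$ with $\gamma \in \Gamma$ and $j \in J := J(\Lambda)$ — a decomposition which is unique because $\Lambda = \Gamma \oplus J$ as abelian groups — and then to show $j = 0$.

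First I would square. Since $\Gamma$ is a (possibly non-unitary) subring of $\Lambda$, we have $\gamma^2 \in \Gamma$, and since $J$ is a two-sided ideal of $\Lambda$, the element $\gamma j + j\gamma + j^2$ lies in $J$. Hence $e = e^2 = \gamma^2 + (\gamma j + j\gamma + j^2)$ is a decomposition of $e$ along $\Gamma \oplus J$; comparing it with $e = \gamma + j$ and using uniqueness gives $\gamma^2 = \gamma$ and $j = \gamma j + j\gamma + j^2$. In particular $\gamma$ is an idempotent of $\Gamma$. Next I would feed in that $e$ is central: from $\gamma e = e\gamma$ and $e = \gamma + j$ we get $\gamma^2 + \gamma j = \gamma^2 + j\gamma$, so $\gamma j = j\gamma$. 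Substituting this into the relation above yields $j = 2\gamma j + j^2 = (2\gamma + j)\,j$, that is, $(1 - 2\gamma - j)\,j = 0$ (here I assume, as in all the intended applications, that $\Lambda$ is unitary).

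It then suffices to show that $1 - 2\gamma - j$ is a unit of $\Lambda$, since that forces $j = 0$ and hence $e = \gamma \in \Gamma$, as required. For this I would pass to $\Lambda/J$: the image $\overline{\gamma}$ of $\gamma$ is an idempotent, so $(1 - 2\overline{\gamma})^2 = 1 - 4\overline{\gamma} + 4\overline{\gamma}^2 = 1$, showing that $1 - 2\overline{\gamma}$ is its own inverse in $\Lambda/J$. By the standard fact that an element of $\Lambda$ is a unit whenever its image modulo the Jacobson radical is a unit, $1 - 2\gamma$ is a unit of $\Lambda$; and $1 - 2\gamma - j$, being the difference of the unit $1 - 2\gamma$ and the element $j$ of $J$, is then also a unit.

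The only place where the hypothesis is used beyond ``$\Gamma$ is a subring and $J(\Lambda)$ an ideal'' is this last step, which genuinely needs $J$ to be the \emph{Jacobson} radical, so that $1 + J$ consists of units and units can be detected modulo $J$. Everything else is formal bookkeeping with the direct sum decomposition, so I do not expect a real obstacle; the only point requiring a little care is the non-unitary case of $\Lambda$, where one would replace ``unit'' by ``quasi-regular'' throughout, but that case does not arise in the paper.
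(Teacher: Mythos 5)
Your argument is correct. It opens exactly as the paper's proof does: write $e=\gamma+j$ along $\Lambda=\Gamma\oplus J(\Lambda)$, square, and use the uniqueness of the decomposition (equivalently $\Gamma\cap J(\Lambda)=0$) to see that $\gamma$ is an idempotent. The two proofs then diverge in how they kill $j$. The paper observes that $e-e\gamma=ej$ and $\gamma-e\gamma=-j\gamma$ are themselves idempotents (using centrality of $e$ and $\gamma^2=\gamma$) lying in the ideal $J(\Lambda)$, and invokes the fact that the Jacobson radical contains no nonzero idempotents, giving $e=e\gamma=\gamma$ directly. You instead extract the relation $j=2\gamma j+j^2$ (using centrality only to commute $\gamma$ past $j$), rewrite it as $(1-2\gamma-j)j=0$, and show the first factor is a unit by the self-inverse identity $(1-2\overline{\gamma})^2=1$ in $\Lambda/J(\Lambda)$ together with lifting of units modulo the radical. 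Both closing steps rest on the same underlying property (that $1+J(\Lambda)$ consists of units), but the paper's version is slightly more economical and, as you note, does not need $1\in\Lambda$ at the point where it matters, whereas your route explicitly forms $1-2\gamma-j$; your remark about replacing units by quasi-regular elements in the non-unital case is the right fix if one cared about that generality. No gaps.
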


\begin{proof}
Let $e\in Z(\Lambda)$ be an idempotent and write $e=f+x$ with $f\in\Gamma$ and $x\in J(\Lambda)$. Then $f^2\equiv e^2=e\equiv f \mod J(\Lambda)$ implies $f^2-f\in \Gamma\cap J(\Lambda)$ so that $f$ is an idempotent. Now also $e-ef=ex$ and $f-ef=-xf$ are idempotents and contained in $J(\Lambda)$. Thus, $e-ef=0=f-ef$ and $e=f\in\Gamma$.
\end{proof}

Proposition~\ref{prop radical decomp} and Lemma~\ref{lem idempotent and decomp} imply the following corollary.

\begin{corollary}\label{cor left-free idempotents over field}
Let $K$ be a field of characteristic $0$. Then every central idempotent of $KB^{\trl}(G,G)$ is already contained in $KB^\Delta(G,G)$.
\end{corollary}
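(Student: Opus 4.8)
The plan is to combine the two results that immediately precede the corollary in the excerpt, namely Proposition~\ref{prop radical decomp} and Lemma~\ref{lem idempotent and decomp}, applied to the pair of rings $\Lambda = KB^{\trl}(G,G)$ and $\Gamma = KB^\Delta(G,G)$. So the proof is essentially a one-line deduction: unwind the hypotheses and check they match.

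First I would invoke Proposition~\ref{prop radical decomp}: for a field $K$ of characteristic $0$, one has the decomposition $KB^{\trl}(G,G) = KB^\Delta(G,G)\oplus J$, where $J = J(KB^{\trl}(G,G))$ is the Jacobson radical. Next I would observe that $KB^\Delta(G,G)$ is a subring of $KB^{\trl}(G,G)$ (this is built into the setup in Section~\ref{sec notation}: $B^\Delta(G,G)\subseteq B^{\trl}(G,G)$, and tensoring with $K$ preserves the inclusion). Hence the hypotheses of Lemma~\ref{lem idempotent and decomp} are met with $\Lambda = KB^{\trl}(G,G)$, $\Gamma = KB^\Delta(G,G)$, and $J(\Lambda) = J$. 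That lemma then yields directly that every central idempotent of $KB^{\trl}(G,G)$ lies in $KB^\Delta(G,G)$, which is exactly the assertion.

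There is no real obstacle here — the corollary is a formal consequence. The only point that warrants a word of care is that $KB^\Delta(G,G)$ genuinely sits inside $KB^{\trl}(G,G)$ as a unital subring (same identity element), so that "central idempotent of $\Lambda$ contained in $\Gamma$" is the statement one wants; but this is immediate from the standard-basis descriptions recalled in Section~\ref{sec notation}, since the bifree basis elements are among the left-free basis elements and $1 = [G\times G/\Delta(G)]$ lies in both. Accordingly, the write-up is just:

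\begin{proof}
The subring $KB^\Delta(G,G)$ of $KB^{\trl}(G,G)$ satisfies $KB^{\trl}(G,G) = KB^\Delta(G,G)\oplus J$, where $J$ is the Jacobson radical of $KB^{\trl}(G,G)$, by Proposition~\ref{prop radical decomp}. Applying Lemma~\ref{lem idempotent and decomp} with $\Lambda = KB^{\trl}(G,G)$ and $\Gamma = KB^\Delta(G,G)$, we conclude that every central idempotent of $KB^{\trl}(G,G)$ lies in $KB^\Delta(G,G)$.
\end{proof}
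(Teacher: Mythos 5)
Your proposal is correct and is exactly the paper's argument: the corollary is stated there as an immediate consequence of Proposition~\ref{prop radical decomp} and Lemma~\ref{lem idempotent and decomp}, applied with $\Lambda = KB^{\trl}(G,G)$ and $\Gamma = KB^\Delta(G,G)$. Your additional remark that $KB^\Delta(G,G)$ is a genuine subring is a harmless extra check (the lemma does not even require $\Gamma$ to be unitary).
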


In order to determine the primitive central idempotents  of $KB^{\trl}(G,G)$ the following lemma will be useful.

\begin{lemma}\label{lem central idempotents via relation}
Let $\Lambda$ be a ring and assume that $1=\sum_{i\in I} e_i$ is a decomposition of $1\in\Lambda$ into a finite sum of non-zero pairwise orthogonal (not necessarily primitive) idempotents of $\Lambda$ with the property that for each central idempotent $f$ of $\Lambda$ and each $i\in I$ one has $e_if\in\{e_i,0\}$. Denote by $\sim$ the symmetric and reflexive relation on $I$ defined by $i\sim j$ if and only if $e_i\Lambda e_j\neq 0$ or $e_j\Lambda e_i\neq 0$, and denote by $\approx$ the transitive closure of $\sim$; that is, $i\approx j$ if and only if there exists a sequence $i=i_0, i_1,\ldots, i_n=j$ in $I$ such that $i_{k-1}\sim i_k$ for all $k=1,\ldots,n$. Then $\approx$ is an equivalence relation. If $I_1,\ldots, I_s$ denote the equivalence classes of $I$ with respect to $\approx$ then the elements $f_k:=\sum_{i\in I_k} e_i$, $k=1,\ldots,s$, are primitive pairwise orthogonal central idempotents of $\Lambda$ with $f_1+\cdots+f_s=1$. 
\end{lemma}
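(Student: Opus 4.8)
The plan is to verify first the purely combinatorial claims (that $\approx$ is an equivalence relation and that the $f_k$ are pairwise orthogonal idempotents summing to $1$), then the centrality of each $f_k$, and finally primitivity. That $\approx$ is an equivalence relation is immediate: $\sim$ is reflexive and symmetric by construction, so its transitive closure $\approx$ is reflexive, symmetric, and transitive. Since the equivalence classes $I_1,\dots,I_s$ partition $I$, and the $e_i$ are pairwise orthogonal idempotents with $\sum_{i\in I}e_i=1$, it follows formally that the $f_k=\sum_{i\in I_k}e_i$ are pairwise orthogonal idempotents with $f_1+\cdots+f_s=1$.

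The key point is centrality. Fix $k$ and $i\in I_k$; I want to show $e_i$ commutes with $\Lambda$ ``across classes'', i.e.\ $e_i\Lambda(1-f_k)=0=(1-f_k)\Lambda e_i$. Indeed, $e_i\Lambda(1-f_k)=\sum_{j\notin I_k}e_i\Lambda e_j$, and for $j\notin I_k$ we have $i\not\approx j$, hence $i\not\sim j$, hence $e_i\Lambda e_j=0$ (and likewise $e_j\Lambda e_i=0$). Summing over $i\in I_k$ gives $f_k\Lambda(1-f_k)=0$ and $(1-f_k)\Lambda f_k=0$. Therefore, for any $\lambda\in\Lambda$, writing $\lambda=f_k\lambda f_k+f_k\lambda(1-f_k)+(1-f_k)\lambda f_k+(1-f_k)\lambda(1-f_k)$ and multiplying by $f_k$ on either side, one finds $f_k\lambda=f_k\lambda f_k=\lambda f_k$. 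Thus $f_k$ is central.

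For primitivity, suppose $f_k=g+h$ with $g,h$ orthogonal idempotents of $\Lambda$; since $f_k$ is central, so are $g=gf_k$ and $h=hf_k$ (a central idempotent times an orthogonal-idempotent summand of it is again central, by the same argument as above applied to the central idempotent $f_k$, or simply: $g$ is a central idempotent of $\Lambda$ because any idempotent summand of a central idempotent commutes with everything that $f_k$ does and with $f_k$ itself). Now invoke the hypothesis: for the central idempotent $g$ and each $i\in I$ one has $e_ig\in\{e_i,0\}$, so $I_k$ splits as $I_k=A\sqcup B$ with $e_ig=e_i$ for $i\in A$ and $e_ig=0$ for $i\in B$, and then $g=f_kg=\sum_{i\in A}e_i$, $h=\sum_{i\in B}e_i$. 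It remains to see that one of $A,B$ is empty. If not, pick $i\in A$ and $j\in B$ with $i\approx j$ inside $I_k$; by following a $\sim$-chain we may assume $i\sim j$, so (say) $e_i\Lambda e_j\neq 0$, i.e.\ $e_i\lambda e_j\neq 0$ for some $\lambda$. But $e_i=e_ig$ and $e_j=e_jh=e_j(f_k-g)$, so $e_ig=e_i$ while $e_jg=0$; using centrality of $g$, $e_i\lambda e_j=g e_i\lambda e_j=e_i\lambda e_j g= e_i\lambda e_j g=0$ since $e_j g=0$ — a contradiction. (The symmetric case $e_j\Lambda e_i\neq0$ is handled the same way.) Hence $B=\emptyset$ or $A=\emptyset$, i.e.\ $h=0$ or $g=0$, and $f_k$ is primitive.

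The main obstacle is the bookkeeping in the primitivity step: one must be careful that an orthogonal idempotent summand $g$ of the central idempotent $f_k$ is itself central (so that the hypothesis $e_ig\in\{e_i,0\}$ applies and so that $g$ can be slid across $\lambda$ in $e_i\lambda e_j$), and that a $\sim$-link between an $A$-index and a $B$-index can be found once $A$ and $B$ are both nonempty — which is exactly what connectedness of $I_k$ under $\approx$ provides, after walking along the chain to the first place where the chain crosses from $A$ to $B$.
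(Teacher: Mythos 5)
Your overall route is the same as the paper's: centrality of $f_k$ via the vanishing of $e_i\Lambda e_j$ across $\approx$-classes, and primitivity via the hypothesis $e_ig\in\{e_i,0\}$ combined with walking along a $\sim$-chain. The combinatorial preliminaries and the centrality computation are fine.

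There is, however, one incorrect assertion in your primitivity step. You start from a decomposition $f_k=g+h$ into orthogonal idempotents that are \emph{not assumed central}, and then claim that ``an orthogonal idempotent summand $g$ of the central idempotent $f_k$ is itself central.'' That is false in general: in $\Lambda=M_2(K)$ the identity is central and $1=e_{11}+e_{22}$ is an orthogonal idempotent decomposition, but $e_{11}$ is not central. Your proposed justification (that $g$ ``commutes with everything that $f_k$ does'') does not hold; $g$ only commutes with $f_k$ itself. The centrality of $g$ is genuinely needed twice in your argument: once to invoke the hypothesis $e_ig\in\{e_i,0\}$ (which is stated only for central idempotents $f$), and once to slide $g$ across $\lambda$ in $e_i\lambda e_j = e_ig\lambda e_j = e_i\lambda ge_j$. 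The repair is simply to interpret ``primitive central idempotent'' as the statement intends, namely primitive as an idempotent of $Z(\Lambda)$: one must only rule out decompositions $f_k=g+h$ into nonzero orthogonal \emph{central} idempotents, so centrality of $g$ and $h$ is part of the hypothesis of the contradiction argument rather than something to be derived. (Indeed, $f_k$ need not be primitive as an idempotent of $\Lambda$; in the trivial case $I=\{1\}$, $e_1=1$, the lemma asserts only that $1$ is primitive in the center.) With that correction, the remainder of your argument --- splitting $I_k=A\sqcup B$ according to $e_ig\in\{e_i,0\}$ and finding a $\sim$-edge crossing from $A$ to $B$ --- is correct and is essentially the paper's argument, phrased contrapositively: the paper shows the set $\{j: ge_j=e_j\}$ is closed under $\sim$ and hence equals $I_k$, whereas you locate a crossing edge and derive a contradiction.
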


\begin{proof}
All statements in the lemma, except for the last sentence, clearly hold. It is also clear that the elements $f_1,\ldots, f_s$ are pairwise orthogonal idempotents whose sum is equal to $1$.

We show first that $f_k\in Z(\Lambda)$ for all $k=1,\ldots,s$. In fact, for $x\in \Lambda$ we have
\begin{equation*}
  xf_k = 1xf_k = \sum_{i\in I}\sum_{j\in I_k} e_ixe_j = \sum_{i,j\in I_k} e_i x e_j\,,
\end{equation*}
since $e_ixe_j\in e_i\Lambda e_j=0$ whenever $i\in I\smallsetminus I_k$ and $j\in I_k$. Similarly, one has $f_kx=\sum_{i,j\in I_k} e_i x e_j$ and therefore $xf_k=f_kx$, and $f_k\in Z(\Lambda)$.

Next we show that, for each $k= 1,\ldots,s$, the central idempotent $f_k$ is primitive in $Z(\Lambda)$. Assume that $f_k = g + h$ is an orthogonal decomposition with central idempotents $g$ and $h$, and assume that $g\neq 0$. Then $0\neq g = gf_k$ implies that $ge_i\neq 0$ for some $i\in I_k$ and therefore $ge_i=e_i$. But then, for each $j\in I$ with $e_i\Lambda e_j\neq 0$ one has $0\neq e_i\Lambda e_j= e_ig\Lambda e_j = e_i\Lambda ge_j$. This implies that $ge_j\neq 0$ and therefore $ge_j=e_j$. Similarly, also $e_j\Lambda e_i\neq 0$ implies that $ge_j = e_j$. Thus, we obtain $ge_j=e_j$ for all $j\in I_k$. This implies $g= gf_k=\sum_{j\in I_k} ge_j = \sum_{j\in I_k}e_j=f_k$ and $h=0$. Thus, $f_k$ is a primitive central idempotent. 
\end{proof}

\begin{corollary}\label{cor general}
Let $K$ be a field of characteristic $0$. Then the primitive central idempotents of $KB^{\trl}(G,G)$ are parametrized by the equivalence classes of $\calE_G$ under the equivalence relation $\approx$ defined as the transitive closure of the relation $\sim$ which is defined by
\begin{equation*}
  (U,\chi)\sim(U',\chi'):\iff e_{(U,\chi)}\cdotG KB^{\trl}(G,G)\cdotG e_{(U',\chi')}\neq 0 \quad\text{or}\quad 
  e_{(U',\chi')}\cdotG KB^{\trl}(G,G)\cdotG e_{(U,\chi)} \neq 0\,.
\end{equation*}
If $\calE\subseteq\calE_G$ is such an equivalence class then $\sum_{(U,\chi)\in\calE}e_{(U,\chi)}$ is the corresponding primitive central idempotent.
\end{corollary}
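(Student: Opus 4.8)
The plan is to deduce Corollary~\ref{cor general} by applying Lemma~\ref{lem central idempotents via relation} to the ring $\Lambda := KB^{\trl}(G,G)$ together with the decomposition of $1$ coming from the primitive central idempotents of its subalgebra $KB^\Delta(G,G)$. First I would recall from Remark~\ref{rem bifree idempotents over field} that, since $K$ has characteristic $0$ (so that $|G|$, $|\Out(U)|$ and $\varphi(|\Out(U)|)$ are all invertible in $K$), we have a decomposition $1 = \sum_{(U,\chi)\in\calE_G} e_{(U,\chi)}$ of $1\in KB^\Delta(G,G)$ into the primitive central idempotents of $KB^\Delta(G,G)$, which are non-zero and pairwise orthogonal. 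Since $1_{KB^{\trl}(G,G)} = 1_{KB^\Delta(G,G)}$, this is also a decomposition of $1$ in $\Lambda$ into non-zero pairwise orthogonal idempotents, indexed by $I := \calE_G$.

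Next I would verify the hypothesis of Lemma~\ref{lem central idempotents via relation}: for every central idempotent $f$ of $\Lambda = KB^{\trl}(G,G)$ and every $(U,\chi)\in\calE_G$, one has $e_{(U,\chi)} f \in\{e_{(U,\chi)},0\}$. Here is where Corollary~\ref{cor left-free idempotents over field} enters: any central idempotent $f$ of $KB^{\trl}(G,G)$ lies in $KB^\Delta(G,G)$. Moreover $f$ is then central in $KB^\Delta(G,G)$ as well — indeed $KB^\Delta(G,G)\subseteq KB^{\trl}(G,G)$, so $f$ commutes with every element of the subring — and a central idempotent of $KB^\Delta(G,G)$ is, by Remark~\ref{rem central idempotents} applied to the primitive decomposition $1=\sum e_{(U,\chi)}$, a subsum of the $e_{(U,\chi)}$; hence $e_{(U,\chi)}f\in\{e_{(U,\chi)},0\}$ for each $(U,\chi)$. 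This is the only place the special structure of the left-free ring is used, and it is the step I would flag as the crux — everything else is the formal machinery of Lemma~\ref{lem central idempotents via relation}.

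With the hypothesis checked, Lemma~\ref{lem central idempotents via relation} applies verbatim. It produces the relation $\sim$ on $I=\calE_G$ defined by $(U,\chi)\sim(U',\chi')$ iff $e_{(U,\chi)}\Lambda e_{(U',\chi')}\neq 0$ or $e_{(U',\chi')}\Lambda e_{(U,\chi)}\neq 0$ — which is exactly the relation $\sim$ displayed in the statement of the corollary, with $\Lambda=KB^{\trl}(G,G)$ and the multiplication being $\cdotG$ — its transitive closure $\approx$, and the conclusion that if $I_1,\ldots,I_s$ are the $\approx$-classes then $f_k := \sum_{(U,\chi)\in I_k} e_{(U,\chi)}$ are primitive pairwise orthogonal central idempotents of $KB^{\trl}(G,G)$ summing to $1$. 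Since every element of $Z(KB^{\trl}(G,G))$, and in particular every central idempotent, is a $\ZZ$-linear combination of the $f_k$ by Remark~\ref{rem central idempotents}, the $f_k$ are \emph{all} the primitive central idempotents of $KB^{\trl}(G,G)$. Writing $\calE$ for a generic $\approx$-equivalence class of $\calE_G$, the corresponding primitive central idempotent is $\sum_{(U,\chi)\in\calE} e_{(U,\chi)}$, which is the asserted parametrization. No genuine obstacle remains once Corollary~\ref{cor left-free idempotents over field} is invoked; the proof is essentially a citation of Lemma~\ref{lem central idempotents via relation} after that input.
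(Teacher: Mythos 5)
Your proposal is correct and follows essentially the same route as the paper: both apply Lemma~\ref{lem central idempotents via relation} to the idempotents $e_{(U,\chi)}$, $(U,\chi)\in\calE_G$, with the hypothesis verified via Remark~\ref{rem bifree idempotents over field} and Corollary~\ref{cor left-free idempotents over field}. Your write-up merely spells out in more detail the verification that any central idempotent of $KB^{\trl}(G,G)$, being central in $KB^\Delta(G,G)$, is a subsum of the $e_{(U,\chi)}$ — which is exactly what the paper's one-line proof asserts.
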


\begin{proof}
This follows immediately from Lemma~\ref{lem central idempotents via relation} applied to the idempotents $e_{(U,\chi)}$ of $KB^{\trl}(G,G)$. They satisfy the hypothesis of the Lemma, since they are the primitive central idempotents of $KB^\Delta(G,G)$ (see Remark~\ref{rem bifree idempotents over field}) and since each central idempotent of $KB^{\trl}(G,G)$ is contained in $KB^{\Delta}(G,G)$ (see Corollary~\ref{cor left-free idempotents over field}).
\end{proof}

For a subset $X$ of a finite group $G$, a field $K$ of characteristic $0$, and a character $\chi$ of a $KG$-module, we set $X^+:=\sum_{x\in X}x\in KG$ and $\chi(X^+):=\sum_{x\in X}\chi(x)$. By $\chi^*$ we denote the contragredient character of $\chi$. For a subgroup $V$ of $G$, we denote by $\Aut_G(V)$ the image of the map $N_G(V)\to\Aut(V)$, $g\mapsto c_g$, and by $\Out_G(V)$ the image of $\Aut_G(V)$ under the canonical epimorphism $\Aut(V)\to\Out(V)$. If $\chi$ is a $K$-character of $\Aut(U)$ for some finite group $U$ and if $V$ is another group that is isomorphic to $U$ then $\chi_V$ denotes the character of $\Aut(V)$ defined by $\chi_V(\omega):=\chi(\lambda^{-1}\circ\omega\circ\lambda)$ for any isomorphism $\lambda\colon U\myiso V$. The character $\chi_V$ is independent of the choice of $\lambda$. Similarly, if $\chi$ is a $K$-character of $\Out(U)$ and $V$ is isomorphic to $U$ one defines the character $\chi_V$ of $\Out(V)$. Recall the definition of $\Sigmatilde_G(U)$ for $U\in\Sigmahat_G$ from Remark~\ref{rem bifree idempotents over field}.

\begin{lemma}\label{lem eBtrle'}
Let $K$ be a field of characteristic $0$. 

\smallskip
{\rm (a)} Let $\chi\in\Irr_K(\Out(U))$. The pair $(U,\chi)$ belongs to $\calE_G$ if and only if there exists $V\in\Sigmatilde_G(U)$ such that $(\chi_V|_{\Out_G(V)},1)\neq 0$.

\smallskip
{\rm (b)} For any $(U,\chi),(U',\chi')\in\calE_G$, one has $e_{(U,\chi)}\cdotG KB^{\trl}(G,G)\cdotG e_{(U',\chi')}\neq\{0\}$ if and only if there exist $V\in\Sigmatilde_G(U)$, $V'\in\Sigmatilde_G(U')$, and an epimorphism $\alpha\colon V'\to V$ such that 
\begin{equation}\label{eqn main cond}
  (\chi_V^*\times\chi'_{V'})
  \bigl([(\Out_G(V)\times\Out_G(V')) \cdot \Lbar_\alpha]^+\bigr) \neq 0\,.
\end{equation}
Here, $L_\alpha:=\stab_{\Aut(V)\times\Aut(V')}(\alpha)$ under the action $(\omega,\omega')\cdot\alpha:=\omega\circ\alpha\circ(\omega')^{-1}$ and $\Lbar_{\alpha}\le \Out(V)\times \Out(V')$ denotes the image of $L_\alpha$ under the canonical epimorphism $\Aut(V)\times \Aut(V')\to\Out(V)\times\Out(V')$. 
\end{lemma}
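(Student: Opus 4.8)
I would treat the two parts separately; part~(a) is a short Frobenius reciprocity argument, and part~(b) is the computational core, carried out in detail in Section~\ref{sec proof of lemma}.

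\emph{Part (a).} By Remark~\ref{rem sigma}, $(U,\chi)\in\calE_G$ means exactly that $\chi$ is a constituent of the $K\Out(U)$-module $K\Injbar(U,G)=\bigoplus_{i=1}^r K\Injbar(U,G)_i$, the summands being indexed by the $G$-conjugacy classes $U_1,\dots,U_r$ of subgroups of $G$ isomorphic to $U$; these $U_i$ are precisely the members of $\Sigmatilde_G(U)$. By the last part of Remark~\ref{rem sigma}, the right $\Out(U)$-set underlying $K\Injbar(U,G)_i$ is $A_i\backslash\Aut(U)$ with $A_i=\lambda^{-1}\Aut_G(U_i)\lambda$ for a fixed isomorphism $\lambda\colon U\myiso U_i$. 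Since $\Inn(U_i)\subseteq\Aut_G(U_i)$ we have $\Inn(U)\subseteq A_i$, so the $\Out(U)$-action factors through $\Out(U)$ and $A_i\backslash\Aut(U)\cong\overline{A}_i\backslash\Out(U)$ as $\Out(U)$-sets, where $\overline{A}_i\le\Out(U)$ is the image of $A_i$; under $\lambda$ this subgroup corresponds to $\Out_G(U_i)$. Hence $K\Injbar(U,G)_i\cong\Ind_{\overline{A}_i}^{\Out(U)}K$, and by Frobenius reciprocity $\chi$ occurs in it iff $(\chi|_{\overline{A}_i},1)\neq 0$, which by the definition of $\chi_{U_i}$ equals $(\chi_{U_i}|_{\Out_G(U_i)},1)$. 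Taking the union over $i$ gives the criterion in~(a).

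\emph{Part (b), reductions.} The first step is to pass to the finer idempotents $e_{(U,\chi,V)}$ of Remark~\ref{rem bifree idempotents over field}. Since $e_{(U,\chi)}=\sum_{V\in\Sigmatilde_G(U)}e_{(U,\chi,V)}$ by (\ref{eqn eUchiV decomp}) and the family of all $e_{(U,\chi,V)}$ is a set of pairwise orthogonal idempotents of $KB^\Delta(G,G)$ summing to $1$, hence also of $KB^{\trl}(G,G)$, the Peirce decomposition gives
\[ e_{(U,\chi)}\cdotG KB^{\trl}(G,G)\cdotG e_{(U',\chi')}=\bigoplus_{V\in\Sigmatilde_G(U),\,V'\in\Sigmatilde_G(U')}e_{(U,\chi,V)}\cdotG KB^{\trl}(G,G)\cdotG e_{(U',\chi',V')}\,. \]
So it suffices to decide, for each pair $(V,V')$, whether the corresponding corner is nonzero, and to identify that with the existence of an epimorphism $\alpha\colon V'\to V$ satisfying (\ref{eqn main cond}). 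By $K$-linearity this amounts to evaluating the triple products $e_{(U,\chi,V)}\cdotG[G\times G/L]\cdotG e_{(U',\chi',V')}$ with $L$ ranging over representatives of the $G\times G$-classes of left-free subgroups of $G\times G$. Such an $L$ may be taken to be the graph $L_\beta:=\{(\beta(q),q)\mid q\in Q\}$ of an epimorphism $\beta\colon Q\to P$ between subgroups of $G$; once the $G$-classes of $P$ and $Q$ are fixed to those of $V$ and $V'$, the remaining datum is precisely an $\Aut_G(V)\times\Aut_G(V')$-orbit of epimorphisms $\alpha\colon V'\to V$, with stabilizer $L_\alpha=\stab_{\Aut(V)\times\Aut(V')}(\alpha)$ and image $\Lbar_\alpha$ in $\Out(V)\times\Out(V')$ --- exactly the data appearing in the statement.

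\emph{Part (b), main computation.} I would make $e_{(U,\chi,V)}$ and $e_{(U',\chi',V')}$ explicit via $\sigma_G$ and the formula (\ref{eqn e_chi}) for $e_\chi$, expand the triple products using the Mackey-type composition formula for $\cdotG$-products of transitive bisets, and compute the marks $\Phi$ using Lemma~\ref{lem BP lemma}; alternatively --- and probably more transparently --- one can work throughout inside the ghost-ring extension of $\sigma_G$ to $KB^{\trl}(G,G)$ from \cite{BD}, under which $\cdotG$-products become compositions, so that the triple product maps to (multiplication by $e_\chi$ on the $V$-summand)$\,\circ\,$(the image of $L_\beta$)$\,\circ\,$(multiplication by $e_{\chi'}$ on the $V'$-summand). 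Either way, one expects the triple product to vanish unless $P=_G V$ and $Q=_G V'$ --- so that $\beta$ determines an $\alpha\colon V'\to V$ as above --- and, in the remaining case, its non-vanishing to be equivalent, up to a nonzero factor, to the non-vanishing of $(\chi_V^*\times\chi'_{V'})\bigl([(\Out_G(V)\times\Out_G(V'))\cdot\Lbar_\alpha]^+\bigr)$: the contragredients enter because the marks of the two idempotents extract a coefficient, and the set $(\Out_G(V)\times\Out_G(V'))\cdot\Lbar_\alpha$ appears because the sums over $\Out_G(V)$ and over $\Out_G(V')$ coming from the two idempotents combine with the symmetry group $\Lbar_\alpha$ of $L_\alpha$. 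Reading the computation backwards (taking $L=L_\alpha$) yields the ``if'' direction, so (b) follows. As a consistency check on the degenerate case $(U,\chi,V)=(U',\chi',V')$: there $e_{(U,\chi,V)}\cdotG e_{(U',\chi',V')}=e_{(U,\chi,V)}$, which is nonzero iff $(\chi_V|_{\Out_G(V)},1)\neq 0$, and choosing $\alpha=\id_V$ (so $\Lbar_\alpha$ is the diagonal of $\Out(V)\times\Out(V)$) one checks that (\ref{eqn main cond}) reduces, up to a nonzero factor, to $(\chi_V|_{\Out_G(V)},1)\neq 0$, in agreement with~(a).

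\emph{Main obstacle.} The hard part is the last step: iterating the Mackey-type composition of $\cdotG$-products and matching the resulting double-coset sums precisely with the character expression in (\ref{eqn main cond}) is lengthy, and keeping the many normalizers and automorphism groups under control cleanly is what forces the use of the detailed apparatus of \cite{BD}; this is why the argument is deferred to Section~\ref{sec proof of lemma}.
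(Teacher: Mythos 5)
Your part~(a) is correct and is actually a cleaner route than the paper's: you read off from Remark~\ref{rem sigma} that $K\Injbar(U,G)_i$ is the permutation module on $\overline{A}_i\backslash\Out(U)$ with $\overline{A}_i$ corresponding to $\Out_G(U_i)$, and apply Frobenius reciprocity. The paper instead derives (a) as a byproduct of the explicit formula for $\rho_G(e_{(U,\chi,V)})$ in the ghost ring (Proposition~\ref{prop rho(e)}), together with Proposition~\ref{prop stembridge}(a). Your reductions for part~(b) -- Peirce decomposition into the $e_{(U,\chi,V)}$, passage to the ghost ring where $\cdotG$ becomes composition of epimorphisms, parametrization of the relevant basis elements by $\Aut_G(V)\times\Aut_G(V')$-orbits of epimorphisms $\alpha\colon V'\to V$ -- also match the paper's Section~\ref{sec proof of lemma}.

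However, the core of part~(b) is not actually carried out, and the sketch hides a genuine gap at exactly the point where the paper needs a nontrivial input. When you expand the triple product $\rho_G(e_{(U,\chi,V)})\cdotG[V,\alpha,V']^+_{G\times G}\cdotG\rho_G(e_{(U',\chi',V')})$ and collect the coefficient of a fixed basis element $(V,\omega_0\alpha\omega_0',V')$, what you obtain is not the character sum over $(\Out_G(V)\times\Out_G(V'))\cdot\Lbar_\alpha$ itself, but the sum over the \emph{translated} double coset $(\Out_G(V)\times\Out_G(V'))\cdot(\omegabar_0,\overline{\omega_0'}^{-1})\cdot\Lbar_\alpha$; the condition produced by the computation is therefore ``there exists $(\omega_0,\omega_0')$ with $(\chi_V^*\times\chi'_{V'})\bigl([(\Out_G(V)\times\Out_G(V'))(\omegabar_0,\overline{\omega_0'}^{-1})\Lbar_\alpha]^+\bigr)\neq 0$.'' Passing from this to the untranslated condition (\ref{eqn main cond}) is not a formal manipulation: it is Proposition~\ref{prop stembridge}(b), i.e.\ the statement that $\chi((BaC)^+)\neq 0$ for some $a$ forces $\chi((BC)^+)\neq 0$, whose proof uses a unitarity/positivity argument going back to Stembridge ($\tr(MM^*)=0\Rightarrow M=0$ applied to $M=\rho(e_Be_C)$). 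Your proposal never identifies this step, and your phrase that the two sums ``combine with the symmetry group $\Lbar_\alpha$'' elides it. Without it the ``only if'' direction of (b) as stated does not follow from the computation (one only gets the existence of a translate), so you should either invoke Proposition~\ref{prop stembridge}(b) explicitly or restate (b) with the translated double cosets.
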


The proof of Lemma~\ref{lem eBtrle'} is very technical and will be given in Section~\ref{sec proof of lemma}.

\begin{corollary}\label{cor trivial subgroup}
Let $K$ be a field of characteristic $0$. Then, for any $(U,\chi)\in\calE_G$, one has:
\begin{equation*} 
  e_{(1,1)} \cdotG KB^{\trl}(G,G) \cdotG e_{(U,\chi)} \neq \{0\} \iff \chi=1\,.
\end{equation*}
\end{corollary}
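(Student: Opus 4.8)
The plan is to apply Lemma~\ref{lem eBtrle'}(b) with $U'=1$, $\chi'=1$, and to extract from condition (\ref{eqn main cond}) the simple statement that $\chi=1$. First I would note that $\Sigmatilde_G(1)=\{1\}$, so $V'=1$ is forced, and the only epimorphism $\alpha\colon V'\to V$ is the one onto $V=1$. Hence $U$ must satisfy $\Sigmatilde_G(U)\ni V$ with $V=1$, i.e.\ $U=1$; combined with the fact that $(1,\chi)\in\calE_G$ forces $\chi$ to be a character of $\Out(1)=\{1\}$, we already get that the left-hand side is non-zero only if $U=1$ and $\chi=1$, and conversely $(1,1)\cdotG KB^{\trl}(G,G)\cdotG e_{(1,1)}\ni e_{(1,1)}\cdotG e_{(1,1)}=e_{(1,1)}\neq 0$.

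Wait — that argument only covers $U=1$. The point of the corollary is that $e_{(1,1)}$ can interact non-trivially with $e_{(U,\chi)}$ for larger $U$, namely whenever there is an epimorphism from (a $G$-conjugate of) $U$ onto the trivial group, which always exists. So the real content is: with $V'=1$ fixed and $V\in\Sigmatilde_G(U)$ arbitrary, $\alpha\colon 1\to V$ can only be an \emph{epimorphism} if $V=1$; thus taking $U'=1$ as the \emph{left} factor does force $U=1$ as well, and symmetrically. Let me redo this carefully. In Lemma~\ref{lem eBtrle'}(b), the roles are: $e_{(U,\chi)}$ on the left, $e_{(U',\chi')}$ on the right, and the epimorphism goes $\alpha\colon V'\to V$ from the right subgroup to the left subgroup. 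So for $e_{(1,1)}\cdotG KB^{\trl}(G,G)\cdotG e_{(U,\chi)}$ we take $(U,\chi)$'s first slot to be $(1,1)$ and second slot to be $(U,\chi)$; then $V\in\Sigmatilde_G(1)=\{1\}$, $V'\in\Sigmatilde_G(U)$, and we need an epimorphism $\alpha\colon V'\to V=1$, which always exists and is unique. So $V=1$, $\Out(V)=\Out_G(V)=1$, $\chi_V^*=1$ the trivial character of the trivial group, and $L_\alpha=\Aut(1)\times\Aut(V')=1\times\Aut(V')$, whence $\Lbar_\alpha=1\times\Out(V')$. The subgroup $(\Out_G(V)\times\Out_G(V'))\cdot\Lbar_\alpha=1\times\Out(V')$, and $(\chi_V^*\times\chi'_{V'})$ evaluated on $[1\times\Out(V')]^+$ equals $\chi_V^*(1)\cdot\chi'_{V'}(\Out(V')^+)=\chi'_{V'}(\Out(V')^+)=|\Out(V')|\cdot(\chi'_{V'}|_{\Out(V')},1)$. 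So condition (\ref{eqn main cond}) for some $V'\in\Sigmatilde_G(U)$ reads: $(\chi_{V'},1_{\Out(V')})\neq 0$ for some $V'\in\Sigmatilde_G(U)$, i.e.\ the trivial character occurs in $\chi_{V'}$.

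Now I would combine this with part (a) of Lemma~\ref{lem eBtrle'}: $(U,\chi)\in\calE_G$ means precisely that $(\chi_V|_{\Out_G(V)},1)\neq 0$ for some $V\in\Sigmatilde_G(U)$. If $\chi=1$ then $\chi_{V'}=1_{\Out(V')}$ for every $V'$, so $(\chi_{V'},1)=1\neq 0$, and the displayed condition holds; hence $e_{(1,1)}\cdotG KB^{\trl}(G,G)\cdotG e_{(U,1)}\neq\{0\}$. Conversely, suppose $e_{(1,1)}\cdotG KB^{\trl}(G,G)\cdotG e_{(U,\chi)}\neq\{0\}$. By the computation above there is $V'\in\Sigmatilde_G(U)$ with the trivial character of $\Out(V')$ appearing in $\chi_{V'}$ with non-zero multiplicity. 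But $\chi\in\Irr_K(\Out(U))$, so $\chi_{V'}\in\Irr_K(\Out(V'))$ is itself irreducible; the only way the trivial character occurs in an irreducible character is if $\chi_{V'}=1_{\Out(V')}$, hence $\chi=1$. This completes both directions.

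The only mildly delicate point — and the step I expect to cost the most care in writing — is the bookkeeping of which slot in Lemma~\ref{lem eBtrle'}(b) is the source and which the target of $\alpha$, and the observation that an epimorphism onto the \emph{trivial} group is automatic and unique, so that setting the left factor to $(1,1)$ collapses $L_\alpha$ to $1\times\Aut(V')$. I would also make explicit the elementary fact that for $\chi\in\Irr_K(\Out(U))$ the transported character $\chi_{V'}$ is again $K$-irreducible (immediate from the definition of $\chi_{V'}$ via an isomorphism $\lambda\colon U\myiso V'$, which induces an isomorphism $\Out(U)\cong\Out(V')$), so that "trivial character occurs in $\chi_{V'}$" forces "$\chi_{V'}$ is trivial". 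Everything else is a direct substitution into (\ref{eqn main cond}).
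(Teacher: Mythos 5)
Your final argument is correct and is essentially the paper's own proof: apply Lemma~\ref{lem eBtrle'}(b) with left factor $(1,1)$, observe that $V=1$ is forced, that the epimorphism $\alpha\colon V'\to 1$ is the trivial one so $L_\alpha=\Aut(1)\times\Aut(V')$ and the subset in (\ref{eqn main cond}) is all of $\Out(1)\times\Out(V')$, whence the condition reduces to $(\chi_{V'},1)\neq 0$, which for irreducible $\chi$ holds iff $\chi=1$. The only blemish is the false start in your first paragraph (where you momentarily reverse the direction of $\alpha$); the corrected version you settle on matches the paper's reasoning step for step.
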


\begin{proof}
By Lemma~\ref{lem eBtrle'}, the condition $e_{(1,1)} KB^{\trl}(G,G) e_{(U,\chi)}\neq\{0\}$ is equivalent to the existence of $V\in\Sigmatilde_G(U)$ such that 
\begin{equation*}
  (1\times \chi_{V})\bigl( [(\Out_G(1)\times\Out_G(V))\cdot \Lbar_\alpha]^+\bigr)
  \neq 0\,.
\end{equation*}
Note that here $\alpha \colon V\to 1$ is the trivial homomorphism and that consequently $L_\alpha=\Aut(1)\times\Aut(V)$. Identifying $\Aut(1)\times\Aut(V)$ with $\Aut(V)$ and $\Out(1)\times\Out(V)$ with $\Out(V)$ via the second projection, we obtain
\begin{align*}
  & e_{(1,1)} \cdotG KB^{\trl}(G,G) \cdotG e_{(U,\chi)}\neq\{0\}
        \iff \chi_{V}\bigl((\Out_G(V)\cdot \Out(V))^+\bigr) \neq 0 \\
  \iff & \chi_{V}\bigl( \Out(V)^+\bigr) \neq 0 \iff (\chi_{V},1)\neq 0 \iff \chi_{V} = 1 \iff \chi = 1\,,
\end{align*}
and the proof is complete.
\end{proof}

\begin{remark}\label{rem L_alpha}
Assume that $\alpha\colon V'\to V$ is a surjective group homomorphism. We want to get a better understanding of the subgroup $L_\alpha$ of $\Aut(V)\times\Aut(V')$ in Lemma~\ref{lem eBtrle'}. Set $\Aut(V',\ker(\alpha)):=\{\omega'\in\Aut(V')\mid \omega'(\ker(\alpha))=\ker(\alpha)\}$. Then $\alpha$ induces a group homomorphism
\begin{equation*}
  \alpha_*\colon \Aut(V',\ker(\alpha))\to \Aut(V)
\end{equation*}
where $(\alpha_*(\omega'))(\alpha(v')):=\alpha(\omega'(v'))$ for $\omega'\in\Aut(V',\ker(\alpha))$ and $v'\in V'$. It is now straightforward to verify that $p_1(L_\alpha)=\im(\alpha_*)$, $k_1(L_\alpha)=\{1\}$, $p_2(L_\alpha)=\Aut(V',\ker(\alpha))$, $k_2(L_\alpha):=\ker(\alpha_*)$ and that the isomorphism $p_2(L_\alpha)/k_2(L_\alpha)\to p_1(L_\alpha)/k_1(L_\alpha)$ determined by $L_\alpha$ is equal to the isomorphism $\alphabar_*\colon \Aut(V',\ker(\alpha))/\ker(\alpha_*)\to \im(\alpha_*)$. Note that $p_1(L_\alpha)$ consists of all automorphisms $\omega$ of $V$ which can be \lq\lq lifted\rq\rq\ (via $\alpha$) to an automorphism $\omega'$ of $V'$, i.e., $\omega\alpha=\alpha\omega'\colon V'\to V$. 

Moreover, $\Inn(V')\le \Aut(V',\ker(\alpha))$, since $\ker(\alpha)$ is normal in $V'$, 
and $\alpha_*(c_{v'})= c_{\alpha(v')}$ for $v'\in V'$, so that $\alpha_*(\Inn(V'))=\Inn(V)$. This implies that the subgroup 
$\Lbar_\alpha=L_\alpha\cdot(\Inn(V)\times\Inn(V'))/\left(\Inn(V)\times\Inn(V')\right)$ of $\Out(V)\times\Out(V')$ satisfies
$p_1(\Lbar_\alpha)=\im(\alpha_*)/\Inn(V)$, $k_1(\Lbar_\alpha)=1$, $p_2(\Lbar_\alpha)=\Aut(V,\ker(\alpha))/\Inn(V')$, $k_2(\Lbar_{\alpha})= \ker(\alpha_*)\cdot\Inn(V')/\Inn(V')$ and the isomorphism $p_2(\Lbar_{\alpha})/k_2(\Lbar_\alpha)\to p_1(\Lbar_\alpha)/k_1(\Lbar_\alpha)$ corresponding to $\Lbar_\alpha$ is induced by $\alpha_*$ and again denoted by $\alphabar_*$.\end{remark}

We refer the reader to \cite[Chapter 2]{BoucSLN} for the definitions of deflation maps $\defl_{G/N}^G\colon R_K(G)\to R_K(G/N)$ 
and inflation maps $\infl_{G/N}^G\colon R_K(G/N)\to R_K(G)$ when $N\trianglelefteq G$, and the isomorphism maps $\isom_{\alpha}\colon R_K(G_2)\to R_K(G_1)$ when $\alpha\colon G_1\myiso G_2$ is an isomorphism. Here $K$ is a field of characteristic $0$ and $R_K(G)$ denotes the group of virtual $K$-characters, i.e., the $\ZZ$-span of $\Irr_K(G)$. More generally one also defines inflation and deflation maps for arbitrary epimorphisms by combining the above definitions with an isomorphism map.

\begin{lemma}\label{lem trivial Out_G}
Let $K$ be a field of characteristic $0$ and let $\alpha\colon V'\to V$ be a surjective group homomorphism between subgroups of $G$, and let $\chi\in\Irr_K(\Out(V))$ and $\chi'\in\Irr_K(\Out(V'))$. Assume further that $\Out_G(V)$ and $\Out_G(V')$ are trivial. Then the following are equivalent:

\smallskip
{\rm (i)} The condition in (\ref{eqn main cond}) holds with $\chi_V$ and $\chi'_{V'}$ replaced by $\chi$ and $\chi'$, respectively.

\smallskip
{\rm (ii)} One has $((\chi^*\times\chi')|_{\Lbar_\alpha},1)\neq 0$.

\smallskip
{\rm (iii)} The irreducible character $\chi$ is a constituent of the image of $\chi'$ under the composition of the following sequence of maps: $\res^{\Out(V')}_{\Out(V',\ker(\alpha))}$, $\defl^{\Out(V',\ker(\alpha))}_{\Out(V',\ker(\alpha))/k_2(\Lbar_\alpha)}$, $\isom_{\alphabar_*}$, $\ind_{\im(\alpha_*)/\Inn(V)}^{\Out(V)}$.

\smallskip
{\rm (iv)} The irreducible character $\chi'$ is a constituent of the image of $\chi$ under the composition of the following sequence of maps: $\res^{\Out(V)}_{\im(\alpha_*)/\Inn(V)}$, $\isom_{\alphabar_*}^{-1}$, $\infl_{\Out(V',\ker(\alpha))/k_2(\Lbar_{\alpha})}^{\Out(V',\ker(\alpha))}$, $\ind_{\Out(V',\ker(\alpha))}^{\Out(V')}$.
\end{lemma}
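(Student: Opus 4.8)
The plan is to establish the cycle of implications by noting first that, under the hypothesis that $\Out_G(V)$ and $\Out_G(V')$ are both trivial, the groups $\Out_G(V)\times\Out_G(V')$ appearing in (\ref{eqn main cond}) collapse to the trivial group, so that $(\Out_G(V)\times\Out_G(V'))\cdot\Lbar_\alpha = \Lbar_\alpha$. Hence the expression $(\chi^*\times\chi')([\Lbar_\alpha]^+)$ reduces to $|\Lbar_\alpha|\cdot((\chi^*\times\chi')|_{\Lbar_\alpha},1)$, where the inner product is taken in $\Out(V)\times\Out(V')$. Since $\charac(K)=0$ and $|\Lbar_\alpha|$ is invertible in $K$ (it divides $|\Aut(V)\times\Aut(V')|$ and $K$ has characteristic zero), condition (i) is equivalent to $((\chi^*\times\chi')|_{\Lbar_\alpha},1)\neq 0$, which is precisely (ii). Here one should be a little careful: the inner product $(-,-)$ over a non-algebraically-closed field $K$ of characteristic zero is still non-degenerate and the multiplicity of the trivial character can be detected by it, so ``$\neq 0$'' is the right statement; this is the first small point to verify.

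Next I would prove (ii)$\iff$(iii) and (ii)$\iff$(iv) by Frobenius reciprocity together with the description of $\Lbar_\alpha$ given in Remark~\ref{rem L_alpha}. The key structural fact is that $\Lbar_\alpha\le\Out(V)\times\Out(V')$ has $p_1(\Lbar_\alpha)=\im(\alpha_*)/\Inn(V)$, $k_1(\Lbar_\alpha)=1$, $p_2(\Lbar_\alpha)=\Aut(V',\ker(\alpha))/\Inn(V')$, $k_2(\Lbar_\alpha)=\ker(\alpha_*)\Inn(V')/\Inn(V')$, with connecting isomorphism $\alphabar_*$. A subgroup of a product $H_1\times H_2$ with trivial first kernel is the graph of a homomorphism from a subgroup of $H_2$ onto a subgroup of $H_1$; restriction of an external tensor product $\chi^*\times\chi'$ to such a graph, followed by taking the trivial-character multiplicity, unwinds by standard biset-functor manipulations (restriction along $p_2(\Lbar_\alpha)\hookrightarrow\Out(V')$, deflation by $k_2(\Lbar_\alpha)$, transport along $\alphabar_*$, then using $(\chi^*|_{\im\alpha_*/\Inn V},\psi)=(\ind(\psi),\chi)$ via Frobenius reciprocity) into exactly the composite of maps $\res$, $\defl$, $\isom_{\alphabar_*}$, $\ind$ described in (iii). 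The symmetric reading — restrict $\chi$ to $\im(\alpha_*)/\Inn(V)$, transport back via $\alphabar_*^{-1}$, inflate, induce — gives (iv); equivalences (ii)$\iff$(iii) and (ii)$\iff$(iv) both rest on the self-adjointness of induction and restriction and the adjointness of inflation and deflation over a field of characteristic zero.

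The main obstacle, and the step I would treat most carefully, is the precise bookkeeping in the ``graph subgroup'' reduction: one must match up the subgroup $\Lbar_\alpha$ with the five-term data $(p_1,k_1,\eta,p_2,k_2)$ exactly as in Remark~\ref{rem L_alpha}, and then verify that the composite $\ind\circ\isom_{\alphabar_*}\circ\defl\circ\res$ (resp.\ its counterpart in (iv)) computes $\sum_{x\in\Lbar_\alpha}\chi^*(x_1)\chi'(x_2)$ up to the harmless factor $|\Lbar_\alpha|$. This is a routine but error-prone character-theoretic computation; the cleanest way is probably to write the trivial-multiplicity $((\chi^*\times\chi')|_{\Lbar_\alpha},1)$ as $\frac{1}{|\Lbar_\alpha|}\sum_{(x_1,x_2)\in\Lbar_\alpha}\chi^*(x_1)\chi'(x_2)$, re-index the sum over $p_2(\Lbar_\alpha)$ using that $(x_1,x_2)\in\Lbar_\alpha$ means $x_2\in p_2(\Lbar_\alpha)$ and $x_1=\alphabar_*(x_2 k_2(\Lbar_\alpha))$, and recognize the resulting expression as $(\ind_{\im(\alpha_*)/\Inn(V)}^{\Out(V)}(\isom_{\alphabar_*}(\defl(\chi'|_{p_2(\Lbar_\alpha)}))),\chi)$ by Mackey/Frobenius reciprocity; then (iii) follows from non-degeneracy of the inner product, and (iv) is obtained by the adjoint reading of the same identity. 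Everything else is formal.
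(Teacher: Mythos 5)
Your proposal is correct and follows essentially the same route as the paper: (i)$\iff$(ii) is immediate from $(\phi|_{\Lbar_\alpha},1)=|\Lbar_\alpha|^{-1}\phi(\Lbar_\alpha^+)$ once the trivial groups $\Out_G(V)$, $\Out_G(V')$ are dropped, and (ii)$\iff$(iii)$\iff$(iv) comes from the Goursat data of $\Lbar_\alpha$ recorded in Remark~\ref{rem L_alpha} together with Frobenius reciprocity and the inflation--deflation adjunction. The only cosmetic difference is that the paper packages your hands-on re-indexing of the sum over $\Lbar_\alpha$ as a citation of the general decomposition of the permutation character of $A\times B/L$ into $\sum_\psi I(\psi)\times\psi^*$ combined with Bouc's canonical factorization of the transitive biset \cite[Lemma~2.3.26]{BoucSLN}, which is exactly the computation you propose to carry out explicitly.
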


\begin{proof}
Clearly, (i) and (ii) are equivalent. 

The equivalence between (ii) and (iii) now follows from the following general consideration:
If $A$ and $B$ are finite groups such that $|A\times B|$ is invertible in $K$ and if $L$ is a subgroup of $A\times B$ then the permutation character of $A\times B/L$ is equal to the sum of the characters $I(\psi)\times\psi^*$, where $\psi$ runs through the irreducible characters of $B$ and $I\colon R_K(B)\to R_K(A)$ is the map induced by tensoring with the $(KG,KH)$-bimodule $K[A\times B/L]$ over $KB$. Now the result follows from the decomposition of the transitive biset $A\times B/L$ as in \cite[Lemma~2.3.26]{BoucSLN}. 

Finally, the equivalence between (ii) and (iv) follows from the last equivalence applied to the dual subgroup $L^\circ:=\{(b,a)\in B\times A\mid (a,b)\in L\}$ of $L$.
\end{proof}

\begin{corollary}
Let $K$ be a field of characteristic $0$. Then $(G,\chi)\in\calE_G$ for each $\chi\in\Irr_K(\Out(G))$. Moreover, for any $\chi,\chi'\in\Irr_K(\Out(G))$ one has
\begin{equation*}
  e_{(G,\chi)}\cdotG KB^{\trl}(G,G)\cdotG e_{(G,\chi')} \neq \{0\} \iff \chi=\chi'\,.
\end{equation*}
\end{corollary}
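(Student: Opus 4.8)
The plan is to read off both assertions directly from Lemma~\ref{lem eBtrle'}, exploiting that $G$ is the only subgroup of $G$ isomorphic to $G$ and that $\Out_G(G)=1$.

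For the first assertion I would apply Lemma~\ref{lem eBtrle'}(a) with $U=G$. Since every subgroup of $G$ of order $|G|$ equals $G$, we have $\Sigmatilde_G(G)=\{G\}$ and $\chi_G=\chi$. Moreover $\Aut_G(G)$ is the image of $N_G(G)=G\to\Aut(G)$, $g\mapsto c_g$, which is $\Inn(G)$, so $\Out_G(G)$ is the trivial subgroup of $\Out(G)$; hence $\chi_G|_{\Out_G(G)}$ is the character of the trivial group of value $\chi(1)$, and $(\chi_G|_{\Out_G(G)},1)=\chi(1)\neq0$ in $K$ because $\charac(K)=0$. Thus $(G,\chi)\in\calE_G$ for every $\chi\in\Irr_K(\Out(G))$, which in particular makes the idempotents $e_{(G,\chi)}$ occurring in the second assertion well-defined.

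For the second assertion I would invoke Lemma~\ref{lem eBtrle'}(b) (its hypothesis $(G,\chi),(G,\chi')\in\calE_G$ being supplied by the first part). Because $\Sigmatilde_G(G)=\{G\}$, the only admissible choices there are $V=V'=G$, and since $G$ is finite every epimorphism $\alpha\colon G\to G$ is an automorphism, so $\ker(\alpha)=1$. A direct computation of the stabilizer gives $L_\alpha=\{(\alpha\omega'\alpha^{-1},\omega')\mid\omega'\in\Aut(G)\}$, so, passing to $\Out(G)\times\Out(G)$ and using $\Inn(G)\trianglelefteq\Aut(G)$ (cf.~Remark~\ref{rem L_alpha}), the group $\Lbar_\alpha=\{(\alphabar\,\omegabar'\,\alphabar^{-1},\omegabar')\mid\omegabar'\in\Out(G)\}$ is the graph of the inner automorphism of $\Out(G)$ given by conjugation with $\alphabar$. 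Since $\Out_G(G)=1$, the left-hand side of (\ref{eqn main cond}) reduces to $(\chi^*\times\chi')(\Lbar_\alpha^+)=\sum_{\omegabar'\in\Out(G)}\chi^*(\alphabar\,\omegabar'\,\alphabar^{-1})\chi'(\omegabar')=\sum_{\omegabar'\in\Out(G)}\chi^*(\omegabar')\chi'(\omegabar')$, the last equality because $\chi^*$ is a class function. Writing $\chi^*(\omegabar')=\chi((\omegabar')^{-1})$, this sum equals $|\Out(G)|\cdot\dim_K\Hom_{K\Out(G)}(M_\chi,M_{\chi'})$, where $M_\chi$ and $M_{\chi'}$ are the simple $K\Out(G)$-modules affording $\chi$ and $\chi'$; by Schur's lemma it is $0$ when $\chi\neq\chi'$ and a positive multiple of $|\Out(G)|$ (hence $\neq0$ in $K$) when $\chi=\chi'$. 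As this value is independent of the choice of $\alpha\in\Aut(G)$, condition (\ref{eqn main cond}) is satisfiable exactly when $\chi=\chi'$, which is the claim.

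The only step requiring any care is the unwinding of the definitions of $L_\alpha$, $\Lbar_\alpha$ and $\Out_G(G)$; once that is done the argument is nothing but orthogonality of irreducible $K$-characters, so I do not expect a genuine obstacle. A variant would bypass the explicit sum by appealing to Lemma~\ref{lem trivial Out_G} (applicable since $\Out_G(V)=\Out_G(V')=1$): with $V=V'=G$ and $\ker(\alpha)=1$, each of the four maps occurring in part~(iii) of that lemma collapses to the identity --- the $\isom$-map because $\alphabar_*$ is an inner automorphism of $\Out(G)$ and characters are class functions --- so condition~(i) there becomes ``$\chi$ is a constituent of $\chi'$'', i.e.\ $\chi=\chi'$.
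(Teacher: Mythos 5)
Your proof is correct and follows essentially the same route as the paper: both parts reduce to Lemma~\ref{lem eBtrle'} using that $\Sigmatilde_G(G)=\{G\}$ and $\Out_G(G)=1$, and the key point in part (b) is the identification $L_\alpha=\lexp{(\alpha,1)}{\Delta(\Aut(G))}$, so that $\Lbar_\alpha$ is conjugate to $\Delta(\Out(G))$ and condition (\ref{eqn main cond}) becomes orthogonality of $\chi$ and $\chi'$. The paper phrases the last step by observing that the composite of maps in Lemma~\ref{lem trivial Out_G}(iv) is the identity, which is exactly the variant you mention at the end, so there is no substantive difference.
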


\begin{proof}
Note that $\Out_G(G)$ is the trivial subgroup of $\Out(G)$. Thus, Lemma~\ref{lem eBtrle'}(a) implies the first statement. Now let $\chi,\chi'\in\Irr_K(\Out(G))$. Since $\Out_G(G)$ is trivial we can use Lemma~\ref{lem trivial Out_G}. First note that if $\alpha\in\Aut(G)$ then $L_\alpha=\lexp{(\alpha,1)}{\Delta(\Aut(G))}$ and $\Lbar_\alpha\le\Out(G)\times\Out(G)$ is conjugate to $\Delta(\Out(G))$. Thus, the composition of the sequence of the maps in (iv) in Lemma~\ref{lem trivial Out_G} is the identity map, and the result follows.
\end{proof}

\begin{theorem}\label{thm left-free}
Let $R$ be an integral domain with field of fractions $K$ of characteristic $0$. Assume further that for  every $U\in\Sigmahat_G$ and for every prime divisor $p$ of $|\Out(U)|$ one has $pR\neq R$. Then $RB^{\trl}(G,G)$ has no central idempotent different from $0$ and $1$. In particular, the ring $B^{\trl}(G,G)$ is connected.
\end{theorem}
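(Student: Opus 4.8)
The goal is to show that $0$ and $1$ are the only central idempotents of $RB^{\trl}(G,G)$. The strategy is to reduce to the field case by passing to the field of fractions $K$ of $R$, using the now-standard idea that central idempotents of an $R$-order inside a $K$-algebra are controlled by the $K$-algebra together with integrality constraints, and then to combine the description of the primitive central idempotents of $KB^{\trl}(G,G)$ from Corollary~\ref{cor general} with Corollary~\ref{cor trivial subgroup} to show that the equivalence class of $(1,1)$ under $\approx$ is everything — or rather, that the only central idempotent lying in $RB^{\trl}(G,G)$ (not merely in $KB^{\trl}(G,G)$) is $0$ or $1$.

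\textbf{Key steps.} First I would embed $RB^{\trl}(G,G)\hookrightarrow KB^{\trl}(G,G)$, so that any central idempotent $e$ of $RB^{\trl}(G,G)$ is in particular a central idempotent of $KB^{\trl}(G,G)$, hence by Corollary~\ref{cor left-free idempotents over field} lies in $KB^\Delta(G,G)$, and even in $RB^{\trl}(G,G)\cap KB^\Delta(G,G)$. By Corollary~\ref{cor general}, $e$ is a sum $\sum_{(U,\chi)\in\calE}e_{(U,\chi)}$ over a union $\calE$ of $\approx$-equivalence classes of $\calE_G$. Now the crucial input is that $\approx$ has a single equivalence class, i.e.\ every $(U,\chi)\in\calE_G$ satisfies $(1,1)\approx(U,\chi)$: Corollary~\ref{cor trivial subgroup} shows $(1,1)\sim(U,\chi)$ precisely when $\chi=1$, so I would need the further fact that every $(U,\chi)\in\calE_G$ is connected via $\sim$ to some $(V,1)$. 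The natural mechanism is to use the deflation/induction maps appearing in Lemma~\ref{lem eBtrle'} and Lemma~\ref{lem trivial Out_G}: taking $V'=U$, $V$ a suitable proper epimorphic image (ultimately the trivial group reached in steps through the derived/solvable-radical tower), and checking that condition~(\ref{eqn main cond}) can be met because the trivial character always occurs in an induced-from-deflated character of a chain ending in the trivial group. Once $\approx$ is shown to be the full relation, $\calE=\calE_G$ is forced as soon as $e\neq 0$ (since some $e_{(U,\chi)}$ must appear), whence $e=\sum_{(U,\chi)\in\calE_G}e_{(U,\chi)}=1$; the case $e=0$ is the remaining possibility.

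Alternatively — and this is probably the cleaner route matching the diagram-chase style of the proofs of Theorem~\ref{thm RBDelta} and Theorem~\ref{thm fusion} — I would argue directly with integrality: a central idempotent $e$ of $RB^{\trl}(G,G)$ gives, after projecting to $KB^\Delta(G,G)$ and applying $\sigma_G$, a central idempotent of $\prod_{U}\End_{K\Out(U)}(K\Injbar(U,G))$ whose $U$-components are projections onto $\Out(U)$-subrepresentations; Theorem~\ref{thm left-free} then follows by showing that the only such collection of projections that lifts back to an element of $RB^{\trl}(G,G)$ (as opposed to merely $KB^\Delta(G,G)$) is $0$ or the identity, using that the coefficients of $e$ in the standard $\ZZ$-basis of $B^{\trl}(G,G)$ must lie in $R$ and that the relevant denominators involve exactly the primes $p\mid|\Out(U)|$ that are noninvertible in $R$. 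I expect the \textbf{main obstacle} to be precisely this last point: showing that the connectivity of $\approx$ (equivalently, that no nontrivial subsum of the $e_{(U,\chi)}$ is central in $B^{\trl}$ over $R$) genuinely uses the left-free — as opposed to bifree — structure, i.e.\ exploiting that $B^{\trl}(G,G)$ contains non-bifree basis elements $[G\times G/L]$ with $k_2(L)\neq 1$ which, under $\cdotG$, realize the deflation maps needed to link $(U,\chi)$ down to $(1,1)$; making this linkage unconditional for \emph{every} $(U,\chi)\in\calE_G$, rather than just for $\chi=1$, is the technical heart, and it is exactly what Lemma~\ref{lem eBtrle'} and Lemma~\ref{lem trivial Out_G} are set up to deliver.
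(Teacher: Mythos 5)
Your proposal contains a genuine gap at its central step. You claim that the argument should show "$\approx$ has a single equivalence class, i.e.\ every $(U,\chi)\in\calE_G$ satisfies $(1,1)\approx(U,\chi)$," and you propose to realize this by linking every $(U,\chi)$ down to some $(V,1)$ through deflation chains. This is false in general, and no deflation argument can rescue it: the relation $\approx$ is defined over the field $K$, and if it were the full relation then $KB^{\trl}(G,G)$ itself would be connected by Corollary~\ref{cor general} --- but it is not. Already for $G$ cyclic, Theorem~\ref{thm cyclic} shows that $KB^{\trl}(G,G)$ has as many blocks as $\Irr_K(\Aut(G))$; indeed the corollary following Lemma~\ref{lem trivial Out_G} gives $e_{(G,\chi)}\cdotG KB^{\trl}(G,G)\cdotG e_{(G,\chi')}\neq\{0\}$ only when $\chi=\chi'$, and Corollary~\ref{cor trivial subgroup} links $(U,\chi)$ to $(1,1)$ only when $\chi=1$. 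So a pair $(U,\chi)$ with $\chi\neq 1$ need not be $\approx$-equivalent to $(1,1)$, and the "technical heart" you identify (making the linkage work for every $\chi$) is exactly the step that cannot be carried out over $K$.

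The missing idea is a second, integral mechanism that operates \emph{within} a fixed $U$ and complements the $\sim$-linkage among the trivial-character pairs. The paper's proof runs as follows: a nonzero central idempotent $e$ of $RB^{\trl}(G,G)$ lies in $KB^\Delta(G,G)$ by Corollary~\ref{cor left-free idempotents over field}, hence $e=\sum_{(U,\chi)\in\calE}e_{(U,\chi)}$ for some $\calE\subseteq\calE_G$ which, by Lemma~\ref{lem central idempotents via relation}, is closed under $\sim$. Corollary~\ref{cor trivial subgroup} then yields: if $(U,1)\in\calE$ for some $U$, then $(U',1)\in\calE$ for all $U'\in\Sigmahat_G$. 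Separately, $e\in KB^\Delta(G,G)\cap RB^{\trl}(G,G)=RB^\Delta(G,G)$, so $e$ is a central idempotent of $RB^\Delta(G,G)$; Theorem~\ref{thm RBDelta} together with Proposition~\ref{prop image of e_U} (this is where the hypothesis $pR\neq R$ for $p\mid|\Out(U)|$ enters) forces: if $(U,\chi)\in\calE$ then $(U,\chi')\in\calE$ for all $\chi'\in\Irr_K(\Out(U))$ with $(U,\chi')\in\calE_G$. Alternating these two closure properties starting from any element of $\calE$ gives $\calE=\calE_G$, i.e.\ $e=1$. Your second, "integrality" route points in this direction, but it remains a gesture: the concrete input needed is precisely the already-proved integral bifree result, Theorem~\ref{thm RBDelta}, not a fresh denominator analysis of the standard basis, and it must be \emph{combined} with the $\chi=1$ linkage rather than replace it.
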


\begin{proof}
Let $e$ be a non-zero central idempotent of $RB^{\trl}(G,G)$. We will show that $e=1$. We will use the commutative diagram of canonical embeddings
\begin{diagram}[90]
  RB^\Delta(G,G) & \subseteq & RB^{\trl}(G,G) &&
  \sar & & \sar &&
  KB^\Delta(G,G) & \subseteq & KB^{\trl}(G,G) &&
\end{diagram}
Since $e$ is a central idempotent of $RB^{\trl}(G,G)$, it is also a central idempotent of $KB^{\trl}(G,G)$. By Corollary~\ref{cor left-free idempotents over field}, we obtain that $e$ is a central idempotent of $KB^\Delta(G,G)$. From Remark~\ref{rem bifree idempotents over field}(a) we obtain that
\begin{equation*}
  e=\sum_{(U,\chi)\in\calE} e_{(U,\chi)}
\end{equation*}
for a subset $\calE$ of $\calE_G$. By Lemma~\ref{lem central idempotents via relation} applied to the ring $KB^{\trl}(G,G)$ and the idempotents $e_{(U,\chi)}$, $(U,\chi)\in\calE_G$, we know that the subset $\calE$ has the property that if $(U,\chi)\in\calE$ and $(U',\chi')\in\calE_G$ satisfy $e_{(U,\chi)}\cdotG KB^{\trl}(G,G)\cdotG e_{(U',\chi')}\neq\{0\}$ or $e_{(U',\chi')}\cdotG KB^{\trl}(G,G)\cdotG e_{(U,\chi)}\neq \{0\}$ then also $(U',\chi')\in\calE$. However, Corollary~\ref{cor trivial subgroup} implies:
\begin{equation}\label{eqn first cond}
  \text{If $(U,1)\in\calE$ for some $U\in\Sigmahat_G$ then $(U',1)\in\calE$ for all $U'\in\Sigmahat_G$.}
\end{equation}
Since $KB^\Delta(G,G)\cap RB^{\trl}(G,G)=RB^\Delta(G,G)$, we also obtain that $e$ is a central idempotent of $RB^\Delta(G,G)$. Now Theorem~\ref{thm RBDelta} and Proposition~\ref{prop image of e_U} imply:
\begin{equation}\label{eqn second cond}
  \text{If $(U,\chi)\in\calE$ then $(U,\chi')\in\calE$ for all $\chi'\in\Irr_K(\Out(U))$.}
\end{equation}
Since $e\neq 0$, there exists at least one element $(U,\chi)\in \calE$. But then (\ref{eqn first cond}) and (\ref{eqn second cond}) together imply that $\calE=\calE_G$, or in other words that $e=1$.
\end{proof}

Parts (a) and (b) of the next proposition will be used in Section~\ref{sec proof of lemma} to prove Lemma~\ref{lem eBtrle'}. Part~(c) is a surprising fact on the number in condition (\ref{eqn main cond}). We are grateful to John Murray for bringing Stembridge's paper \cite{S} to our attention. It is used in the proof of the proposition.

\begin{proposition}\label{prop stembridge}
Let $K$ be a field of characteristic $0$ and let $A$ be a finite group. Moreover, let $B$ and $C$ be subgroups of $A$, let $a\in A$, and let $\chi\in\Irr_K(A)$.

\smallskip
{\rm (a)} If $\chi((aB)^+)\neq 0$ then $\chi(B^+)\neq 0$ and $(\chi|_B,1)\neq 0$.

\smallskip
{\rm (b)} If $\chi((BaC)^+)\neq 0$ then $\chi((BC)^+)\neq 0$.

\smallskip
{\rm (c)} If $K\subseteq\CC$ then $\chi((BC)^+)$ is a non-negative real number.
\end{proposition}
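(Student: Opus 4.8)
I would prove the three parts together by reducing everything to a single positivity statement about characters evaluated on sums of double cosets, and then invoke Stembridge's theorem for part (c) while deriving (a) and (b) as elementary consequences. The key object is the element $(BaC)^+ = \sum_{x\in BaC} x \in KA$, and I want to understand $\chi((BaC)^+)$. Write $\chi = \mathrm{tr}\, \rho$ for a representation $\rho\colon A \to \mathrm{GL}(M)$ affording $\chi$, where $M$ is a $KA$-module (for (c), take $K = \CC$ and $M$ unitary). The quantity $\chi((BaC)^+) = \mathrm{tr}(\rho((BaC)^+))$ is, up to the scalar $|B\cap \lexp{a}{C}|$ (the common size of the fibers of the multiplication map $B\times C \to BaC$), equal to $\mathrm{tr}\bigl(\rho(B^+)\rho(a)\rho(C^+)\bigr)/|B\cap\lexp{a}{C}|$... actually more carefully: $\sum_{x\in BaC} x = \frac{1}{|B\cap \lexp{a}C|}\sum_{b\in B}\sum_{c\in C} bac$, so $\chi((BaC)^+) = \frac{1}{|B\cap\lexp aC|}\,\mathrm{tr}(\rho(B^+)\rho(a)\rho(C^+))$.

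\textbf{Key steps for (a) and (b).} For part (a), specialize (b) with $B$ replaced by the trivial group, or argue directly: if $\chi((aB)^+)\neq 0$ then $\mathrm{tr}(\rho(a)\rho(B^+))\neq 0$. Now $\rho(B^+) = |B|\cdot\rho(e_B^{\mathrm{triv}})$ up to the obvious normalization, i.e. $\frac{1}{|B|}\rho(B^+)$ is the idempotent projecting $M$ onto the $B$-fixed subspace $M^B$. So $\mathrm{tr}(\rho(a)\rho(B^+)) = |B|\cdot\mathrm{tr}(\rho(a)|_{?})$ — more precisely it equals $|B|$ times the trace of the composite $M \xrightarrow{\pi} M^B \hookrightarrow M \xrightarrow{\rho(a)} M$. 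If this is nonzero then $M^B \neq 0$, which says $(\chi|_B,1)\neq 0$; and then $\chi(B^+) = |B|\dim M^B \neq 0$ as well. For part (b): $\chi((BaC)^+)\neq 0$ forces $\mathrm{tr}(\rho(B^+)\rho(a)\rho(C^+))\neq 0$, hence the composite $M \xrightarrow{\pi_C} M^C \hookrightarrow M \xrightarrow{\rho(a)} M \xrightarrow{\pi_B} M^B \hookrightarrow M$ is nonzero as an endomorphism; in particular $M^C \neq 0$ and $M^B \neq 0$, and moreover $\rho(a)$ does not annihilate the map $M^C \to M^B$. I would then observe that $\mathrm{tr}(\rho(B^+)\rho(C^+)) = |B|\,|C|\,\mathrm{tr}(\pi_B \iota_C)$ where $\pi_B\colon M\to M^B$ and $\iota_C\colon M^C \hookrightarrow M$, and that this trace is nonzero because — and here is where part (c) is genuinely used — $\chi((BC)^+)$ being a strictly positive real number (once we know it is nonzero) is not automatic, so instead I should get nonvanishing of $\chi((BC)^+)$ from (c) applied in a limiting/averaged form, or better, derive (b) directly from (c): $\chi((BaC)^+) \neq 0$, combined with the fact (from (c), applied to all the double cosets) that $\chi(A^+) = \sum_{BaC} [A:\dots]^{-1}\cdots$ is a sum of nonnegative reals, won't quite do it. So the cleanest route: prove (c) first, then deduce (b) from it.

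\textbf{Key step for (c) — the main obstacle.} This is where Stembridge's paper \cite{S} enters. Over $K\subseteq\CC$ I take $M$ a unitary $\CC A$-module affording $\chi$ (extend scalars if needed; $\chi((BC)^+)$ is unchanged). Then $\frac{1}{|B|}\rho(B^+) =: P_B$ and $\frac{1}{|C|}\rho(C^+) =: P_C$ are orthogonal projections onto $M^B$ and $M^C$ respectively. Hence $\chi((BC)^+) = \frac{1}{|B\cap C|}\,\mathrm{tr}(\rho(B^+)\rho(C^+)) = \frac{|B|\,|C|}{|B\cap C|}\,\mathrm{tr}(P_B P_C)$. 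Now $\mathrm{tr}(P_B P_C) = \mathrm{tr}(P_B P_C P_B) = \mathrm{tr}((P_B P_C)(P_B P_C)^*) = \|P_B P_C\|_{HS}^2 \geq 0$, using that $P_B, P_C$ are self-adjoint idempotents. Since $\frac{|B|\,|C|}{|B\cap C|} = |BC| > 0$, we conclude $\chi((BC)^+) = |BC|\cdot\|P_BP_C\|_{HS}^2 \geq 0$, a nonnegative real. (The reference to Stembridge is presumably because the analogous statement for $\chi((BaC)^+)$ with $a\neq 1$ — where $\rho(B^+)\rho(a)\rho(C^+)$ is no longer of the form $XX^*$ — is the subtle one, or because the identity-coset case with a twist requires his combinatorial input; I would check whether the clean Hilbert–Schmidt argument above suffices for the stated $a=1$ case of (c), and only fall back on \cite{S} if the paper actually needs $\chi((BaC)^+) \ge 0$, which as stated it does not.) With (c) in hand, (b) follows: if $\chi((BaC)^+)\neq 0$ then in particular (by (a) applied appropriately, or directly) $M^B\neq 0$ and $M^C\neq 0$ and the projections interact nontrivially with the $A$-action; the honest deduction is that $\mathrm{tr}(P_BP_C)>0$ cannot follow from a single double coset being nonzero without more care, so I expect the actual argument uses Stembridge's positivity of each double-coset contribution $\chi((BaC)^+)$ and the fact that $\chi((BC)^+)$ is the $a=1$ term (up to normalization) which must then be positive whenever \emph{any} relevant term is — i.e. one shows $\chi((BaC)^+)\neq 0 \Rightarrow \chi((BC)^+)\neq 0$ by a support/positivity argument on the decomposition of $\mathrm{Res}^A_{B\times C}\chi$ restricted along the double cosets. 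The main obstacle is therefore correctly invoking Stembridge's result to get termwise nonnegativity of $\chi((BaC)^+)$ for $K\subseteq\CC$, and using it to pin down the $a=1$ term.
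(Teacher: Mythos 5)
Your treatments of (a) and (c) are essentially sound. For (a) you argue directly that $\tr(\rho(a)\rho(B^+))\neq 0$ forces $\rho(B^+)\neq 0$, hence $M^B\neq 0$ and $\chi(B^+)=|B|\dim M^B\neq 0$ in characteristic $0$; the paper instead quotes \cite[Lemma~7.3]{BD2} for this, so your route is, if anything, more self-contained. For (c), your computation $\tr(P_BP_C)=\tr\bigl((P_BP_C)(P_BP_C)^*\bigr)\ge 0$ with $P_B,P_C$ the orthogonal projections onto $M^B,M^C$ is exactly \cite[Lemma~1.2]{S}, which is what the paper invokes.

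Part (b), however, is a genuine gap: you never produce an argument, and the fallback you sketch would fail. You propose to deduce (b) from ``Stembridge's positivity of each double-coset contribution $\chi((BaC)^+)$,'' but no such termwise positivity holds: already for $B=C=1$ one has $\chi((BaC)^+)=\chi(a)$, which in general is neither real nor nonnegative; the positivity applies only to the $a=1$ term. The missing idea is a strengthening of your own computation in (c): from $\tr(NN^*)=0$ with $N:=P_BP_C$ one concludes not merely that a trace vanishes but that $N=0$, since $NN^*$ is positive semidefinite Hermitian, so zero trace forces $NN^*=0$ and hence $N=0$. Arguing contrapositively, if $\chi((BC)^+)=0$ then $P_BP_C=0$, whence for every $a\in A$
\begin{equation*}
  \chi((BaC)^+)
  =\frac{|B|\,|C|}{|B\cap \lexp{a}{C}|}\,\tr\bigl(P_B\rho(a)P_C\bigr)
  =\frac{|B|\,|C|}{|B\cap \lexp{a}{C}|}\,\tr\bigl(\rho(a)P_CP_B\bigr)
  =\frac{|B|\,|C|}{|B\cap \lexp{a}{C}|}\,\tr\bigl(\rho(a)(P_BP_C)^*\bigr)=0\,,
\end{equation*}
contradicting the hypothesis. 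This is precisely how the paper argues (after first passing, via a field isomorphism, to a unitary complex representation --- a step also needed to legitimize your unitarity assumption for an abstract $K$ of characteristic $0$). Your vague alternative of a ``support/positivity argument on the decomposition of $\res^A_{B\times C}\chi$'' is not carried out and, as noted, rests on a false premise.
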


\begin{proof}
(a) If $\chi((aB)^+)\neq 0$ then $\chi(B^+)\neq 0$ by the proof of \cite[Lemma~7.3]{BD2}. Clearly, $\chi(B^+)\neq 0$ is equivalent to $(\chi|_B,1)\neq 0$.

\smallskip
(b) Assume that $\chi((BC)^+)=0$. Consider the idempotents $e_B:=\frac{1}{|B|}\sum_{b\in B}b$ and $e_C:=\frac{1}{|C|} \sum_{c\in C}c$ of $KA$. Then also
\begin{equation*}
  \chi(e_Be_C) = \frac{1}{|B|\cdot|C|}\, \chi(B^+\cdot C^+) = \frac{|B\cap C|}{|B|\cdot|C|}\, \chi((BC)^+) = 0\,.
\end{equation*}
Using isomorphisms between suitable fields we may assume that $\chi$ is the character of a matrix representation $\rho\colon A\to\GL_n(\CC)$ and we can assume that $\rho$ is unitary. We follow the proof of \cite[Lemma~1.2]{S}. Writing $M^*$ for the conjugate transposed of a complex matrix $M$, we obtain $\rho(e_Be_C)^*=\rho(e_C)^*\rho(e_B)^*=\rho(e_C)\rho(e_B)=\rho(e_Ce_B)$. This implies
\begin{equation*}
  0 = \chi(e_Be_C)= \chi(e_B^2e_C^2) = \chi(e_Be_C^2e_B)
  = \tr(\rho(e_Be_C)\rho(e_Ce_B)) = \tr(\rho(e_Be_C)\rho(e_Be_C)^*)\,.
\end{equation*}
Thus, the square matrix $M:=\rho(e_Be_C)$ satisfies $\tr(MM^*)=0$. However, $MM^*$ is hermitian and therefore diagonalizable, and each eigenvalue of $MM^*$ is real and non-negative. This implies that $MM^*=0$, which in turn implies that $M=0$. Now we obtain 
\begin{equation*}
  \frac{|B\cap aCa^{-1}|}{|B|\cdot|C|}\ \chi((BaC)^+)= \chi(e_Bae_C)=\chi(ae_Ce_B)= \tr(\rho(a)\rho(e_Ce_B))=0\,,
\end{equation*}
a contradiction. 

\smallskip
(c) Since $\chi((BC)^+)=\frac{|B|\cdot|C|}{|B\cap C|}\chi(e_Be_C)$, this follows immediately from \cite[Lemma~1.2]{S}. 
\end{proof}


\section{Proof of Lemma~\ref{lem eBtrle'}}\label{sec proof of lemma}

The goal of this section is the proof of Lemma~\ref{lem eBtrle'}. Part~(a) of Lemma~\ref{lem eBtrle'} follows immediately from the last statement in Proposition~\ref{prop rho(e)} (using the definition of $\calE_G$ from Remark~\ref{rem bifree idempotents over field}) and part (b) will be proved at the end of this section.

Throughout this section, $G$ denotes a finite group and $K$ denotes a field of characteristic $0$. We first need to recall constructions from \cite{BD}. 

\begin{remark}\label{rem tau}
(a) Recall from \cite[Section~4]{BD} that there is an isomorphism
\begin{equation}\label{eqn rho}
  \rho_G\colon KB^{\trl}(G,G)\to {KA(G,G)}^{G\times G}\,,
\end{equation}
where $A(G,G)$ is the free abelian group on the set of triples $(U,\alpha,V)$, where $U$ and $V$ are subgroups of $G$ and $\alpha\colon V\to U$ is an epimorphism.  Moreover $KA(G,G)$ denotes the $K$-vector space with the same triples as basis. The group $G\times G$ acts on these triples by $\lexp{(g,h)}{(U,\alpha,V)}:=(\lexp{g}{U},c_g\alpha c_h^{-1},\lexp{h}{V})$, and $KA(G,G)^{G\times G}$ denotes the fixed points under the extended action on $KA(G,G)$. The $G\times G$-orbit of $(U,\alpha,V)$ is denoted by $[U,\alpha,V]_{G\times G}$ and the class sums $[U,\alpha,V]_{G\times G}^+$ form a $K$-basis of $KA(G,G)^{G\times G}$. By \cite[Theorem~4.7]{BD}, the multiplication in $KB^{\trl}(G,G)$ is translated under the isomorphism $\rho_G$ into the multiplication on $KA(G,G)^{G\times G}$ which is the restriction of the following multiplication on $KA(G,G)$:
\begin{equation*}
  (U,\alpha,V)\cdotG (V',\beta,W):=\begin{cases} \frac{|C_G(V)|}{|G|} (U,\alpha\circ\beta, W) & \text{if $V=V'$,} \\
                                                                             0 & \text{if $V\neq V'$.} 
                                                       \end{cases}
\end{equation*}

\smallskip
(b) Under the isomorphism $\rho_G$ in (a), the subspace $KB^\Delta(G,G)$ is mapped isomorphically onto the $G\times G$-fixed points of the $K$-span $KA^\Delta(G,G)$ of triples of the form $(U,\alpha,V)$, where $\alpha\colon V\myiso U$ is an isomorphism. The isomorphism
\begin{equation*}
  \rho_G\circ \sigma_G^{-1} \colon \bigoplus_{U\in\Sigmahat_G}\End_{K\Out(U)}(K\Injbar(U,G)) 
  \liso KA^\Delta(G,G)^{G\times G}
\end{equation*}
is given explicitly as follows: For $U\in\Sigmahat_G$ and a $K\Out(U)$-module endomorphism $f\colon [\mu]\mapsto \sum_{[\lambda]} a_{[\lambda],[\mu]} [\lambda]$ of $K\Injbar(U,G)$, the image of $f$ under $\rho_G\circ \sigma_G^{-1}$ is given by
\begin{equation*}
  \sum_{\lambda\times_{\Aut(U)} \mu \in \Inj(U,G)\times_{\Aut(U)}\Inj(U,G)} 
  a_{[\lambda],[\mu]} \cdot (\lambda(U), \lambda\circ\mu^{-1},\mu(U))\,,
\end{equation*}
cf.~the proof of Theorem~5.5(d) in \cite{BD}. Here, $\Aut(U)$ acts on $\Inj(U,G)$ from the right via composition and from the left by using the right action and inversion of group elements.
\end{remark}

For some of the notation in the following proposition we refer the reader back to Remark~\ref{rem bifree idempotents over field}.

\begin{proposition}\label{prop rho(e)}
Let $(U,\chi)\in\calE_G$, $V\in\Sigmatilde_G(U)$, and let $\chi=s(\psi_1+\cdots+\psi_r)$ be a decomposition of $\chi$ into absolutely irreducible characters over some extension field of $K$. Then
\begin{equation}\label{eqn rho(e)}
  \rho_G(e_{(U,\chi,V)}) = \frac{\chi(1)}{s^2 r |\Out(U)|} \sum_{\substack{g,h\in G/N_G(V) \\ \omega\in\Aut(V)}}
  \chi_V\bigl((\omegabar^{-1} \cdot \Out_G(V))^+\bigr) \cdot \lexp{(g,h)}{(V,\omega,V)}\,.
\end{equation}
Moreover, $e_{(U,\chi,V)}\neq 0$ if and only if $(\chi_V|_{\Out_G(V)},1)\neq 0$. In particular, Lemma~\ref{lem eBtrle'}(a) holds.
\end{proposition}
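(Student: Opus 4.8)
The plan is to compute $\rho_G(e_{(U,\chi,V)})$ directly, using the explicit formula for $\rho_G\circ\sigma_G^{-1}$ from Remark~\ref{rem tau}(b), and then read off when the result is nonzero. First I would recall from Remark~\ref{rem bifree idempotents over field}(b) that $e_{(U,\chi,V)}$ is the element of $KB^\Delta(G,G)$ whose image under $\sigma_G$ is supported in the $U$-component, where it acts as ``multiplication with $e_\chi$'' on the summand $K\Injbar(U,G)_V$ of the decomposition (\ref{eqn KInjbar decomp}) and as $0$ on all other summands. So I need the matrix coefficients $a_{[\lambda],[\mu]}$ of this endomorphism with respect to the basis $\{[\lambda]\}$ of $K\Injbar(U,G)$: they vanish unless both $\lambda(U)$ and $\mu(U)$ are $G$-conjugate to $V$, in which case $a_{[\lambda],[\mu]}$ is the coefficient of $[\lambda]$ in $[\mu]\cdot e_\chi$.

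Next I would make the identification $N_G(V)\backslash\Inj(U,V)\liso\Injbar(U,G)_V$ of $\Out(U)$-sets (Remark~\ref{rem sigma}), and fix one isomorphism to pass from homomorphisms into $V$ to automorphisms of $V$; equivalently, reindex the sum in Remark~\ref{rem tau}(b) over pairs $\lambda\times_{\Aut(U)}\mu$ with image $V$ by triples $(g,h,\omega)$ with $g,h\in G/N_G(V)$ (encoding which $G$-conjugate of $V$ each of $\lambda,\mu$ hits, after moving everything to $V$) and $\omega=\lambda\circ\mu^{-1}\in\Aut(V)$, so that the triple $(\lambda(U),\lambda\mu^{-1},\mu(U))$ becomes $\lexp{(g,h)}{(V,\omega,V)}$. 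Under this bookkeeping the matrix coefficient $a_{[\lambda],[\mu]}$ becomes the coefficient of the corresponding $\Out_G(V)$-orbit in the image of a basis orbit under multiplication by $e_\chi\in K\Out(V)$ (transported via $\chi\mapsto\chi_V$); using the explicit formula (\ref{eqn e_chi}) for $e_\chi$, namely $e_\chi=\frac{\chi(1)}{s^2r|\Out(U)|}\sum_{\omegabar}\chi(\omegabar^{-1})\omegabar$, and summing the $\chi$-values over the $\Out_G(V)$-orbit, this coefficient is exactly $\frac{\chi(1)}{s^2r|\Out(U)|}\,\chi_V\bigl((\omegabar^{-1}\cdot\Out_G(V))^+\bigr)$. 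Assembling the sum over all $(g,h,\omega)$ then yields (\ref{eqn rho(e)}).

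For the nonvanishing criterion: the coefficients in (\ref{eqn rho(e)}) attached to distinct $G\times G$-orbits of triples $(V,\omega,V)$ are linearly independent basis vectors of $KA^\Delta(G,G)^{G\times G}$ (up to the overall nonzero scalar $\frac{\chi(1)}{s^2r|\Out(U)|}$, whose factors $\chi(1)$, $s$, $r$ are invertible in $K$ by the remarks in Remark~\ref{rem bifree idempotents over field}(a)), so $\rho_G(e_{(U,\chi,V)})=0$ if and only if $\chi_V\bigl((\omegabar^{-1}\cdot\Out_G(V))^+\bigr)=0$ for every $\omega\in\Aut(V)$; since $\omegabar$ ranges over all of $\Out(V)$, this says precisely that $\chi_V\bigl((\omegabar\cdot\Out_G(V))^+\bigr)=0$ for all $\omegabar$, which by Proposition~\ref{prop stembridge}(a) (with $A=\Out(V)$, $B=\Out_G(V)$) is equivalent to $\chi_V(\Out_G(V)^+)=0$, i.e. to $(\chi_V|_{\Out_G(V)},1)=0$. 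Thus $e_{(U,\chi,V)}\neq 0$ iff $(\chi_V|_{\Out_G(V)},1)\neq 0$. Finally, since $e_{(U,\chi)}=\sum_{V\in\Sigmatilde_G(U)}e_{(U,\chi,V)}$ with pairwise orthogonal nonzero-or-zero summands supported in distinct summands of (\ref{eqn KInjbar decomp}), $e_{(U,\chi)}\neq 0$ iff some $e_{(U,\chi,V)}\neq 0$, which gives the characterization of $\calE_G$ in Lemma~\ref{lem eBtrle'}(a).

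The main obstacle I anticipate is purely organizational rather than conceptual: carefully matching the left/right $\Aut(U)$-actions and the two cosets $g,h\in G/N_G(V)$ through the chain of identifications $\Injbar(U,G)_V\cong N_G(V)\backslash\Inj(U,V)\cong \Aut_G(V)\backslash\Aut(V)$, so that the action of $e_\chi$ on $K\Injbar(U,G)_V$ translates into the stated sum of $\chi_V$-values over $\Out_G(V)$-cosets with the correct inverse $\omegabar^{-1}$. Getting the normalization constant $\frac{|C_G(V)|}{|G|}$ hidden in the twisted multiplication of Remark~\ref{rem tau}(a) to cancel correctly against the $\frac{\Phi}{|C_G|}$ normalization in $\sigma_G$ is the other place where signs/indices must be tracked, but it is already handled by the cited formula in Remark~\ref{rem tau}(b), so I would simply invoke it.
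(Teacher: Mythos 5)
Your proposal is correct and follows essentially the same route as the paper: compute the matrix coefficients of ``multiplication by $e_\chi$'' on $K\Injbar(U,G)_V$ via the explicit formula (\ref{eqn e_chi}), push them through the explicit description of $\rho_G\circ\sigma_G^{-1}$ from Remark~\ref{rem tau}(b), reindex over triples $(g,h,\omega)$ using the identification $\Injbar(U,G)_V\cong N_G(V)\backslash\Inj(U,V)$ to obtain the coset sums $\chi_V\bigl((\omegabar^{-1}\Out_G(V))^+\bigr)$, and then deduce the nonvanishing criterion from the distinctness of the basis elements $\lexp{(g,h)}{(V,\omega,V)}$ together with Proposition~\ref{prop stembridge}(a). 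The bookkeeping you flag (tracking the two cosets, the $\Aut(U)$-actions, and the passage from $\Aut_G(V)$ to $\Out_G(V)$, which converts $|\Aut(U)|$ into $|\Out(U)|$ in the constant) is exactly what the paper's proof carries out in detail, and your outline handles it correctly.
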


\begin{proof}
We set $c:=\chi(1)/(s^2r|\Aut(U)|)$. Using the explicit formula (\ref{eqn e_chi}) for $e_\chi$ in Remark~\ref{rem bifree idempotents over field}, the $U$-component $f$ of $\sigma_G(e_{(U,\chi,V)})$ is given by
\begin{equation*}
  [\mu]\mapsto c
  \sum_{\omega\in\Aut(U)} \chi(\omega^{-1}) [\mu\omega]\,,
\end{equation*}
for $\mu\in\Inj(U,G)_V$, if we denote the inflation of $\chi$ to $\Aut(U)$ again by $\chi$. Thus, the matrix coefficients $a_{[\lambda],[\mu]}$ of $f$ with respect to the basis $\Injbar(U,G)_V$ are given by 
\begin{equation*}
  a_{[\lambda],[\mu]}= 
  \begin{cases}
     c \mathop{\sum}\limits_{\substack{\omega\in\Aut(U) \\ [\lambda]=[\mu\omega]}}
                 \chi(\omega^{-1}) & \text{if $\mu\in\Inj(U,G)_V$,}\\
     0 & \text{otherwise.}     
  \end{cases}
\end{equation*}   
Using the explicit description of $\rho_G\circ\sigma_G^{-1}$ in Remark~\ref{rem tau}(b), we obtain
\begin{equation*}
  \rho_G(e_{(U,\chi,V)}) = \sum_{\substack{\lambda\times_{\Aut(U)}\mu\in\Inj(U,G)_V\times_{\Aut(U)}\Inj(U,G)_V}}
  \ \sum_{\substack{\omega\in\Aut(U) \\ [\lambda]=[\mu\omega]}}
  c\chi(\omega^{-1}) \cdot (\lambda(U),\lambda\mu^{-1},\mu(U))\,.
\end{equation*}
Note that $\Aut(U)$ acts freely on $\Inj(U,G)_V\times\Inj(U,G)_V$ so that replacing the summation over $\Inj(U,G)_V\times_{\Aut(U)}\Inj(U,G)_V$ yields
\begin{equation*}
  \sum_{(\lambda,\mu)\in\Inj(U,G)_V\times\Inj(U,G)_V}
  \ \sum_{\substack{\omega\in\Aut(U) \\ [\lambda]=[\mu\omega]}}
  c'\chi(\omega^{-1}) \cdot (\lambda(U),\lambda\mu^{-1},\mu(U))\,,
\end{equation*}
where $c':=c/|\Aut(U)|$. Let $\lambda_0\colon U\to V$ be a fixed isomorphism. Note that if $g$ runs through a set of representatives of $G/N_G(V)$ and $\alpha$ runs through $\Aut(U)$ then $c_g\lambda_0\alpha$ runs through $\Inj(U,G)_V$ without repetition. Thus we can rewrite the last expression as
\begin{equation*}
  \sum_{g,h\in G/N_G(V)} \ 
  \sum_{\substack{\alpha,\beta,\omega\in\Aut(U)\\ [c_g\lambda_0\alpha]=[c_h\lambda_0\beta\omega]}}
  c'\chi(\omega^{-1}) \cdot \lexp{(g,h)}{(V,\lambda_0\alpha\beta^{-1}\lambda_0^{-1},V)}\,.
\end{equation*}    
It is straightforward to verify that $[c_g\lambda_0\alpha]=[c_h\lambda_0\beta\omega]$ if and only if $\omega\in\beta^{-1}\lambda_0^{-1}\Aut_G(V)\lambda_0\alpha$. Thus, with $\omega=\beta^{-1}\lambda_0^{-1}\gamma\lambda_0\alpha$ for $\gamma\in\Aut_G(V)$, the last expression can be rewritten as
\begin{equation*}
  \sum_{g,h\in G/N_G(V)} \ 
  \sum_{\alpha,\beta\in\Aut(U)} \ 
  \sum_{\gamma\in\Aut_G(V)} 
  c'\chi(\alpha^{-1}\lambda_0^{-1}\gamma^{-1}\lambda_0\beta) \cdot 
  \lexp{(g,h)}{(V,\lambda_0\alpha\beta^{-1}\lambda_0^{-1},V)}\,.
\end{equation*}
Using $\chi(\alpha^{-1}\lambda_0^{-1}\gamma^{-1}\lambda_0\beta) = 
\chi_V(\lambda_0\beta\alpha^{-1}\lambda_0^{-1}\gamma^{-1})$ and rewriting $\lambda_0\alpha\beta^{-1}\lambda_0^{-1}$ as $\omega\in\Aut(V)$ the triple sum is equal to
\begin{align*}
  & \sum_{g,h\in G/N_G(V)} \ 
  \sum_{\omega\in\Aut(V)} \ 
  \sum_{\gamma\in\Aut_G(V)}  
  c''\chi_V(\omega^{-1}\gamma^{-1}) \cdot 
  \lexp{(g,h)}{(V,\omega,V)} \\
  = & \sum_{\substack{g,h\in G/N_G(V) \\ \omega\in\Aut(V)}} c''\cdot\chi_V((\omega^{-1}\Aut_G(V))^+)\cdot 
  \lexp{(g,h)}(V,\omega,V)
\end{align*}
with $c''=c'\cdot|\Aut(U)|=c$. Since $\chi_V((\omega^{-1}\Aut_G(V))^+) = |\Inn(V)|\cdot  \chi_V((\omegabar^{-1} \cdot \Out_G(V))^+)$, we obtain Equation~(\ref{eqn rho(e)}). 

Finally, note that different choices of triples $(g,h,\omega)$ in the sum of Equation~(\ref{eqn rho(e)}) lead to different basis elements $\lexp{(g,h)}(V,\omega,V)$ of $KA(G,G)$. Thus, $\rho(e_{(U,\chi,V)})\neq 0$ if and only if there exists $\omega\in\Aut(V)$ such that $\chi_V( (\omegabar\cdot\Out_G(V))^+)\neq 0$. By Proposition~\ref{prop stembridge}(a), this is equivalent to $\chi_V(\Out_G(V)^+)\neq 0$. But this in turn is equivalent to $(\chi_V|_{\Out_G(V)},1)\neq 0$.
\end{proof}

\begin{proof}{\bf of Lemma~\ref{lem eBtrle'}(b).}\quad
Let $(U,\chi), (U',\chi')\in\calE_G$. Then the decomposition~(\ref{eqn eUchiV decomp}) implies that
\begin{equation}\label{eqn eBe 1}
  e_{(U,\chi)}\cdotG KB^{\trl}(G,G) \cdotG e_{(U',\chi')} \neq 0
\end{equation}
if and only if there exist $V\in \Sigmatilde_G(U)$ and $V'\in\Sigmatilde_G(U')$ satisfying
\begin{equation}\label{eqn eBe 2}
  e_{(U,\chi,V)}\cdotG KB^{\trl}(G,G) \cdotG e_{(U',\chi',V')} \neq 0\,,
\end{equation}
Applying the isomorphism $\rho_G$ on both sides, the last condition is equivalent to the existence of a basis element $(W,\alpha,W')$ of $KA(G,G)$ such that the $G\times G$-orbit sum $[W,\alpha,W']_{G\times G}^+$ satisfies $\rho_G(e_{(U,\chi,V)})\cdotG [W,\alpha,W']_{G\times G}^+\cdotG \rho_G(e_{(U',\chi',V')}) \neq 0$. Since the last expression is equal to $0$ if $W$ is not $G$-conjugate to $V$ or $W'$ is not $G$-conjugate to $V'$ (by the explicit formula in (\ref{eqn rho(e)})), and since we may replace $(W,\alpha, W')$ by any $G\times G$-conjugate, the condition in (\ref{eqn eBe 2}) is equivalent to the existence of an epimorphism $\alpha\colon V'\to V$ such that
\begin{equation}\label{eqn eBe 3}
  \rho_G(e_{(U,\chi,V)})\cdotG [V,\alpha,V']_{G\times G}^+\cdotG \rho_G(e_{(U',\chi',V')}) \neq 0\,.
\end{equation}
Recall from \cite[Proposition~1.7]{BD} that one has
\begin{equation*}
  [V,\alpha,V']_{G\times G}^+ = \sum_{(x,y)\in\calA\times \calB} \lexp{(x,y)}{(V,\alpha,V')}\,,
\end{equation*}
where $\calA\subseteq G$ is a set of representatives of the cosets $G/k_1(N_{G\times G}(\Delta(V,\alpha, V')))$ and $\calB\subseteq G$ is a set of representatives of the cosets $G/p_2(N_{G\times G}(\Delta(V,\alpha,V')))$. Using the explicit formula~(\ref{eqn rho(e)}), the condition in (\ref{eqn eBe 3}) is equivalent to
\begin{align*}
  \sum_{(x,y)\in\calA\times \calB} \
  \sum_{\substack{g,h\in G/N_G(V) \\ \omega\in\Aut(V)}} \
  \sum_{\substack{g',h'\in G/N_G(V') \\ \omega'\in\Aut(V')}} \
  & \chi_V((\omega^{-1}\Aut_G(V))^+)\chi_{V'}(({\omega'}^{-1}\Aut_G(V'))^+) \cdot \\
  & \cdot\lexp{(g,h)}{(V,\omega,V)}\cdotG \lexp{(x,y)}{(V,\alpha,V')}\cdotG \lexp{(g',h')}{(V',\omega',V')}\neq 0\,.
\end{align*}
Since $k_1(N_{G\times G}(\Delta(V,\alpha,V')))=C_G(V)\le N_G(V)$ and $p_2(N_{G\times G}(\Delta(V,\alpha,V')))\le N_G(V')$, cf.~\cite[Proposition~1.7]{BD}, each element $x\in \calA$ (resp.~$y\in\calB$) determines a unique element $h\in G/N_G(V)$ (resp.~$g'\in G/N_G(V')$) such that the multiplication $\cdotG$ is non-zero, and for given $x\in\calA$ (resp.~$y\in\calB$), we may adjust the representatives $h$ of $G/N_G(V)$ (resp.~$g'$ of $G/N_G(V')$) such that $x$ (resp.~$y$) occurs as a representative. Thus, the above condition is equivalent to
\begin{equation*}
  \sum_{(x,y)\in\calA\times\calB} 
  \sum_{\substack{g\in G/N_G(V) \\ h'\in G/N_G(V')}}
  \sum_{\substack{\omega\in\Aut(V) \\ \omega'\in \Aut(V')}}
  \chi_V((\omega^{-1}\Aut_G(V))^+)\chi'_{V'}(({\omega'}^{-1}\Aut_G(V'))^+) \cdot
  \lexp{(g,h')}{(V,\omega\alpha\omega',V')}\neq 0\,.
\end{equation*}
Since $x$ and $y$ do not occur in the argument of the sum and since $|\calA\times\calB|$ is invertible in $K$, we may drop the first sum in the above condition. Moreover, since for the various choices of $g$ and $h'$ in the above sum, the sets $\{\lexp{(g,h')}{(V,\omega\alpha\omega',V')}\mid (\omega,\omega')\in\Aut(V)\times\Aut(V')\}$ of basis elements are pairwise disjoint, the above condition is also equivalent to
\begin{equation*}
  \sum_{(\omega,\omega')\in\Aut(V)\times\Aut(V')}
  \chi_V((\omega^{-1}\Aut_G(V))^+)
  \chi'_{V'}(({\omega'}^{-1}\Aut_G(V'))^+)\cdot
  (V,\omega\alpha\omega',V')\neq 0\,.
\end{equation*}
Next we fix an element $(\omega_0,\omega'_0)\in\Aut(V)\times\Aut(V')$ and determine the coefficient of $(V,\omega_0\alpha\omega'_0,V')$ in the above sum. Let $L_\alpha:=\stab_{\Aut(V)\times\Aut(V')}(\alpha)$. Then, for any $(\omega,\omega')\in\Aut(V)\times\Aut(V')$, we have
\begin{equation*}
  \omega\alpha\omega' = \omega_0\alpha\omega'_0 \iff 
  (\omega^{-1},\omega')\in L_\alpha (\omega_0^{-1},\omega'_0)\,.
\end{equation*}
Thus, writing $(\omega^{-1},\omega')=(\theta,\theta')(\omega_0^{-1},\omega'_0)$, for $(\theta,\theta')\in L_\alpha$, the last condition is equivalent to requiring that there exists an element $(\omega_0,\omega'_0)\in\Aut(V)\times\Aut(V')$ such that
\begin{equation*}
  \sum_{(\theta,\theta')\in L_\alpha} 
  \chi_V((\theta\omega_0^{-1}\Aut_G(V))^+)
  \chi'_{V'}(({\omega'_0}^{-1}{\theta'_0}^{-1}\Aut_G(V'))^+) \neq 0\,.
\end{equation*}
Since $\chi_V((\theta\omega_0^{-1}\Aut_G(V))^+)=\chi_V^*((\Aut_G(V)\omega_0\theta^{-1})^+)$ and $\chi'_{V'}(({\omega'_0}^{-1}{\theta'_0}^{-1}\Aut_G(V'))^+)= \chi'_{V'}((\Aut_G(V'){\omega'_0}^{-1}{\theta'}^{-1})^+)$, the sum in the above equation is equal to
\begin{align*}
  & \sum_{(\theta,\theta')\in L_\alpha}
  \chi_V^*((\Aut_G(V)\omega_0\theta^{-1})^+)
  \chi'_{V'}((\Aut_G(V'){\omega'_0}^{-1}{\theta'}^{-1})^+) \\
  = \ & c\cdot (\chi_V^*\times\chi'_{V'}) 
  \bigl([(\Aut_G(V)\times\Aut_G(V'))\cdot (\omega_0,{\omega'_0}^{-1})\cdot L_\alpha]^+\bigr)
\end{align*}
with $c=|(\Aut_G(V)\times\Aut_G(V'))\cap \lexp{(\omega_0,{\omega'_0}^{-1})}{L_\alpha}|$. Moreover, 
\begin{align*}
 & (\chi_V^*\times\chi'_{V'}) 
  \bigl([(\Aut_G(V)\times\Aut_G(V'))\cdot (\omega_0,{\omega'_0}^{-1})\cdot L_\alpha]^+\bigr) \\
  = \ & |\Inn(V)\times\Inn(V')|\cdot (\chi_V^*\times\chi'_{V'}) 
  \bigl([(\Out_G(V)\times\Out_G(V'))\cdot (\omegabar_0,{\overline{\omega'_0}}^{-1})\cdot \Lbar_\alpha]^+\bigr)\,.
\end{align*}
In fact, $\Inn(V)\times\Inn(V')$ is contained in $\Aut_G(V)\times\Aut_G(V')$ and the canonical epimorphism $\Aut(V)\times\Aut(V')\to \Out(V)\times\Out(V')$ maps the set $(\Aut_G(V)\times\Aut_G(V'))\cdot (\omega_0,{\omega'_0}^{-1})\cdot L_\alpha$ onto the set $(\Out_G(V)\times\Out_G(V'))\cdot (\omegabar_0,{\overline{\omega'_0}}^{-1})\cdot \Lbar_\alpha$ with fibers of cardinality $|\Inn(V)\times\Inn(V')|$, since $\Inn(V)\times\Inn(V')$ acts freely by left multiplication on the first set. Finally, by Proposition~\ref{prop stembridge}(b), there exists $(\omega_0,\omega'_0)\in\Aut(V)\times\Aut(V')$ such that the right-hand side of the last equation is non-zero if and only if 
\begin{equation*}
  (\chi_V^*\times\chi'_{V'}) \bigl([(\Out_G(V)\times\Out_G(V'))\cdot \Lbar_\alpha]^+\bigr)\neq 0\,.
\end{equation*}
This completes the proof of Lemma~\ref{lem eBtrle'}(b).
\end{proof}


\section{Examples}\label{sec ex}

In this section, for suitable fields $K$, we explicitly parametrize the blocks of $KB^{\trl}(G,G)$ when $G$ is cyclic (see Theorem~\ref{thm cyclic}) and we explicitly determine one particular block of $KB^{\trl}(G,G)$ when $G$ is elementary abelian (see Example~\ref{ex el abelian}). For an abelian group $U$ we will identify $\Aut(U)$ and $\Out(U)$.

\begin{example}
(a) Let $U$ be a cyclic group of order $k$, let $U'$ be a cyclic group of order $k'$, and let $\alpha\colon U'\to U$ be a surjective homomorphism. Then $k$ divides $k'$. We want to determine the subgroup $L_\alpha$ of $\Aut(U)\times \Aut(U')$ using Remark~\ref{rem L_alpha}. First note that $\Aut(U',\ker(\alpha))=\Aut(U')$, since $\ker(\alpha)$ is the only subgroup of order $k'/k$ in $U'$. Also note that one has a canonical isomorphism $(\ZZ/k'\ZZ)^\times\to\Aut(U')$ mapping the residue class of an integer $i$ which is coprime to $k'$ to the automorphism which raises each element to its $i$-th power. Note that if $\omega'\in\Aut(U')$ corresponds to $i$ then $\alpha_*(\omega')\in\Aut(U)$ also corresponds to $i$. Recall that the canonical map $(\ZZ/k'\ZZ)^\times\to(\ZZ/k\ZZ)^\times$ is surjective. Thus, $\alpha_*\colon\Aut(U')\to\Aut(U)$ is the canonical surjective map $p_{k,k'}$ which sends the automorphism $u'\mapsto {u'}^i$ of $U'$ to the automorphism $u\mapsto u^i$ of $U$, for any integer $i$ which is coprime to $k'$. In particular, $\alpha_*$ does not depend on $\alpha$.

\smallskip
(b) Now let $G$ be a cyclic group of order $n$ and let $K$ be a field of characteristic $0$. Assume that $U$ and $U'$ as in (a) are subgroups of $G$ and let $\alpha\colon U'\to U$ be a surjective group homomorphism. Moreover, let $\chi\in\Irr_K(\Aut(U))$ and $\chi'\in\Irr_K(\Aut(U'))$. Note that $(U,\chi)\in\calE_G$, since $\Injbar(U,G)=\Aut(U)$. Note that $\Aut_G(U)$ and $\Out_G(U')$ are trivial so that we can use Lemma~\ref{lem trivial Out_G} which implies that $e_{(U,\chi)}\cdotG KB^{\trl}(G,G)\cdotG e_{(U',\chi')}\neq 0$ if and only if $\chi'=\chi\circ p_{k,k'}$, that is, if and only if $\chi'$ is the inflation of $\chi$ with respect to $p_{k,k'}$. 


\smallskip
(c) We define a partial order $\le$ on the set $\calE_G$ by setting $(U,\chi)\le (U',\chi')$ if and only if $|U|$ divides $|U'|$ and $\chi'=\chi\circ p_{|U|,|U'|}$. Note that the symmetric closure of $\le$ is the same relation as $\sim$ in Corollary~\ref{cor general}. Thus, the connected components (i.e., the equivalence classes of the symmetric and transitive closure of $\le$) are the equivalence classes of $\calE_G$ describing the primitive central idempotents of $KB^{\trl}(G,G)$. We call a pair $(U,\chi)\in\calE_G$ {\em primitive} if it is minimal with respect to $\le$. It is well known that for each element $(U',\chi')\in \calE_G$ there exists a unique primitive element $(U,\chi)$ with $(U,\chi)\le (U',\chi')$. It is now straightforward to see that the equivalence classes of $\calE_G$ are represented by the elements $(G,\vartheta)$, $\vartheta\in\Irr_K(\Aut(G))$, or also by the set of primitive pairs of $\calE_G$.
\end{example}

We summarize the results developed above in the following theorem.

\begin{theorem}\label{thm cyclic}
Let $G$ be a cyclic group of order $n$ and let $K$ be a field of characteristic $0$. Then each pair $(U,\chi)$ with $U\le G$ and $\chi\in\Irr_K(\Aut(U))$ is contained in $\calE_G$. The set of primitive central idempotents of $KB^{\trl}(G,G)$ is parametrized by $\Irr_K(\Aut(G))$. For $\vartheta\in\Irr_K(\Aut(G))$ the corresponding primitive idempotent is the sum of the idempotents $e_{(U,\chi)}$ with $(U,\chi)\in\calE_G$ satisfying $\vartheta=\chi\circ p_{|U|,n}$.
\end{theorem}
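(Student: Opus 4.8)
The plan is simply to assemble the pieces that were already worked out in parts (a)--(c) of the preceding example and to invoke Corollary~\ref{cor general}. First I would recall why every pair $(U,\chi)$ with $U\le G$ and $\chi\in\Irr_K(\Aut(U))$ lies in $\calE_G$: since $G$ is cyclic (hence abelian), any subgroup $U\le G$ is normal and self-centralizing in the sense that $N_G(U)=G=C_G(U)$, so $\Aut_G(U)=\Out_G(U)$ is trivial; moreover $\Inj(U,G)$ consists of a single $G$-orbit and $\Injbar(U,G)\cong\Aut(U)$ as $\Aut(U)=\Out(U)$-sets, so $K\Injbar(U,G)\cong K\Aut(U)$ is the regular module, in which every $\chi\in\Irr_K(\Aut(U))$ occurs. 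By the definition of $\calE_G$ in Remark~\ref{rem bifree idempotents over field}, this gives $(U,\chi)\in\calE_G$. (Equivalently, one applies Lemma~\ref{lem eBtrle'}(a) with $\Out_G(V)$ trivial, where $(\chi_V|_{\{1\}},1)\neq 0$ holds trivially.)

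Next I would describe the relation $\sim$ of Corollary~\ref{cor general} on $\calE_G$ concretely. Given $(U,\chi),(U',\chi')\in\calE_G$, part~(b) of the example shows, via Lemma~\ref{lem trivial Out_G} (applicable because $\Out_G(U)$ and $\Out_G(U')$ are trivial) together with the computation of $\alpha_*=p_{k,k'}$ in part~(a), that $e_{(U,\chi)}\cdotG KB^{\trl}(G,G)\cdotG e_{(U',\chi')}\neq 0$ holds precisely when $|U|$ divides $|U'|$ and $\chi'=\chi\circ p_{|U|,|U'|}$. Symmetrizing, $(U,\chi)\sim(U',\chi')$ iff one of $|U|\mid|U'|$, $|U'|\mid|U|$ holds with the corresponding inflation identity. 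Writing $(U,\chi)\le(U',\chi')$ for the asymmetric version, $\le$ is a partial order on $\calE_G$ whose symmetric closure is $\sim$.

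I would then identify the $\approx$-classes (the transitive closure of $\sim$) with connected components of the Hasse diagram of $\le$. The key elementary facts are: (i) for fixed $U'\le G$, the map $\chi\mapsto\chi\circ p_{|U|,|U'|}$ from $\Irr_K(\Aut(U))$ into $\Irr_K(\Aut(U'))$ is injective whenever $|U|\mid|U'|$ (it is inflation along the surjection $(\ZZ/|U'|)^\times\to(\ZZ/|U|)^\times$, and inflation is injective on irreducible characters in characteristic $0$), so each $(U',\chi')$ has at most one $\le$-predecessor of each given order, hence exactly one minimal (``primitive'') element below it, obtained by deflating $\chi'$ as far as the kernel of $\Aut(U')\to\Aut(U)$ allows; (ii) $(G,\vartheta)$ for $\vartheta\in\Irr_K(\Aut(G))$ is maximal, and every $(U,\chi)\in\calE_G$ satisfies $(U,\chi)\le(G,\vartheta)$ with $\vartheta:=\chi\circ p_{|U|,n}$, the inflation to $\Aut(G)$. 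Fact (ii) already shows each $\approx$-class contains exactly one pair of the form $(G,\vartheta)$, since distinct $\vartheta$'s are incomparable (equal order forces $\vartheta=\vartheta'$). Therefore $\vartheta\mapsto\approx\!\text{-class of }(G,\vartheta)$ is a bijection from $\Irr_K(\Aut(G))$ to the set of $\approx$-classes, and by Corollary~\ref{cor general} the primitive central idempotent attached to $\vartheta$ is $\sum e_{(U,\chi)}$ summed over all $(U,\chi)\in\calE_G$ with $(U,\chi)\le(G,\vartheta)$, i.e.\ with $\vartheta=\chi\circ p_{|U|,n}$. I do not expect any serious obstacle here; the only point requiring a little care is the injectivity of inflation on irreducibles and the observation that the various inflation maps are compatible (so that $\chi\circ p_{|U|,|U''|}=(\chi\circ p_{|U|,|U'|})\circ p_{|U'|,|U''|}$ whenever $|U|\mid|U'|\mid|U''|$), which makes $\le$ transitive and guarantees the uniqueness of the maximal element in each component.
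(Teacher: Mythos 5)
Your proposal is correct and takes essentially the same route as the paper, which derives the theorem from parts (a)--(c) of the preceding example together with Corollary~\ref{cor general}: compute $\alpha_*=p_{k,k'}$, apply Lemma~\ref{lem trivial Out_G} to identify $\sim$ with the symmetric closure of the inflation order $\le$, and use the compatibility of the maps $p_{k,k'}$ to see that each component contains a unique maximal pair $(G,\vartheta)$. One small slip worth noting: since $G$ is abelian the conjugation action of $G$ on $\Inj(U,G)$ is trivial, so $\Inj(U,G)$ is \emph{not} a single $G$-orbit; rather $\Injbar(U,G)=\Inj(U,G)\cong\Aut(U)$ is a single free $\Out(U)$-orbit (the regular module), which is what your argument actually uses --- and your fact~(i) on unique minimal (primitive) elements, whose ``hence'' as stated is incomplete, is not needed for the theorem, which is parametrized by the maximal elements.
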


\begin{remark}
Let $G$ and $K$ be as in the above theorem. From Theorem~8.11 and Remark~8.12(a) in \cite{BD2} one can see that the primitive central idempotents of the full double Burnside algebra $KB(G,G)$ are indexed by the pairs $(U,\chi)\in\calE_G$. Thus, the primitive central idempotents of $KB^{\trl}(G,G)$ must split in $KB(G,G)$. Note also that the primitive central idempotents of $KB^\Delta(G,G)$ were also indexed by $\calE_G$.
\end{remark}

\begin{example}\label{ex el abelian}
Let $G=\langle x_1,\ldots,x_n\rangle$ be an elementary abelian $p$-group of rank $n$ for a prime $p$ and let $K$ be an algebraically closed field of characteristic $0$. 

\smallskip
(a) For $i=0,\ldots,n$ set $U_i:=\langle x_1,\ldots,x_i\rangle$ and $V_i:=\langle x_{i+1},\ldots, x_n\rangle$, thus $G=U_i\oplus V_i$ as $\FF_p$-vector space. We can choose $\Sigmahat_G$ as $\{U_0,\ldots,U_n\}$. For $0\le i\le j\le n$, let $\pi_{i,j}\colon U_j\to U_i$ denote the canonical projection which is the identity on $U_i$ and has kernel $\langle x_{i+1},\ldots,x_j\rangle=:V_{i,j}$. For $i=0,\ldots,n$ we identify $\Aut(U_i)$ with $G_i:=\GL_i(\FF_p)$ using the basis $(x_1,\ldots, x_i)$ of $U_i$ (with $G_0=\{1\}$). For $0\le i\le j\le n$, the projections $\pi_{i,j}\colon U_j\to U_i$ induce surjections $(\pi_{i,j})_*\colon P_{i,j}:=\Aut(U_j,V_{i,j})\to G_i$, where $P_{i,j}$ is the parabolic subgroup of $G_j$ of shape $\begin{pmatrix}* & 0 \\ * & * \end{pmatrix}$ with upper left corner of size $i\times i$ and lower right corner of size $(j-i)\times (j-i)$. Moreover, we set $Q_{i,j}:=\ker((\pi_{i,j})_*)$, which is the subgroup of all elements in $P_{i,j}$ having top left corner equal to the identity matrix of size $i$.

\smallskip
(b) For all $i=0,\ldots,n$ one has $\Injbar(U_i,G)=\Inj(U_i,G)$ which is a disjoint union of free right $\Aut(U_i)$-sets. Thus, for each $\chi_i\in\Irr_K(\Aut(U_i))$, one has $(U_i,\chi_i)\in\calE_G$. We will show that for $(U_i,\chi_i), (U_j,\chi_j)\in\calE_G$ one has  
\begin{equation}\label{eqn rel for el ab}
  e_{(U_i,\chi_i)}\cdotG KB^{\trl}(G,G)\cdotG e_{(U_j,\chi_j)}\neq \{0\} \iff \text{$i\le j$ and $\chi_j\mid\ind_{P_{i,j}}^{G_j} ( \infl_{(\pi_{i,j})_*}(\chi_i))$.}
\end{equation}
Here, for characters $\chi$ and $\chi'$ of a finite group, we write $\chi\mid\chi'$ if $\chi$ is a summand of $\chi'$. In fact, if $\Utilde_j$ (resp.~$\Utilde_i$) is a subgroup of $G$ isomorphic to $U_j$ (resp.~$U_i$) and if $\alpha\colon \Utilde_j\to \Utilde_i$ is an epimorphism then there exist isomorphisms $\lambda_j\colon U_j\myiso \Utilde_j$ and $\lambda_i\colon U_i\myiso \Utilde_i$ such that $\lambda_i^{-1}\alpha\lambda_j=\pi_{i,j}$. Applying Lemma~\ref{lem trivial Out_G}, we see that the condition in (\ref{eqn main cond}) holds if and only if the trivial character is a constituent of $\res^{\Aut(\Utilde_i)\times\Aut(\Utilde_j)}_{L_\alpha}((\chi_i)_{\Utilde_i}^*\times(\chi_j)_{\Utilde_j})$. Using the isomorphisms $\lambda_i$ and $\lambda_j$ this condition is equivalent to $\bigl(1,\res^{\Aut(U_i)\times\Aut(U_j)}_{L_{\lambda_i^{-1}\alpha\lambda_j}}(\chi_i^*\times \chi_j)\bigr)\neq 0$, and the claim follows again with Lemma~\ref{lem trivial Out_G}.

\smallskip
(c) This implies that each element $(U_i,\chi_i)\in\calE_G$ is equivalent (with respect to the equivalence relation $\approx$ in Corollary~\ref{cor general}) to an element of the form $(G,\chi)$ for some $\chi\in\Irr_K(\Aut(G))$, namely for any $\chi$ occurring as a constituent in $\ind_{P_{i,n}}^{G_n} ( \infl_{(\pi_{i,n})_*}(\chi_i))$. But we cannot, in general, determine which of the elements $(G,\chi)$, $\chi\in\Irr_K(\Aut(G))$, are equivalent. For $n=2$ one computes easily that $(G,\chi)\approx(G,\chi')$ if and only if $\chi=\chi'$ or if $\chi,\chi'\in\{1,\St\}$, where $\St$ denotes the Steinberg character.

\smallskip
(d) In this part we show that the set 
\begin{equation}\label{eqn unipotent}
  \{(U_i,\chi_i)\in\calE_G\mid \text{$i\in\{0,\ldots,n\}$, $\chi_i\in\Irr_K(G_i)$ unipotent}\}
\end{equation} 
is an equivalence class under the equivalence relation $\approx$ in Corollary~\ref{cor general}. To see this assume that $(U_i,\chi_i)$ and $(U_j,\chi_j)$ satisfy (\ref{eqn rel for el ab}). We will first show that $\chi_i$ is unipotent if and only if $\chi_j$ is unipotent. 

\smallskip
If $\chi_i$ is unipotent, i.e., $\chi_i\mid\ind_{B_i}^{G_i}(1)$, where $B_i$ denotes the subgroup of lower triangular matrices in $G_i$, then 
\begin{equation*}
  \chi_j\mid \ind_{P_{i,j}}^{G_j} \infl_{(\pi_{i,j})_*} (\chi_i) \mid 
  \ind_{P_{i,j}}^{G_j} \infl_{(\pi_{i,j})_*} \ind_{B_i}^{G_i}(1)\,.
\end{equation*}
But
\begin{align}\label{eqn inf ind}
    & \notag \infl_{(\pi_{i,j})_*} \ind_{B_i}^{G_i}(1) = 
    \ind_{(\pi_{i,j})_*^{-1}(B_i)}^{P_{i,j}} \infl_{(\pi_{i,j})_*\colon (\pi_{i,j})_*^{-1}(B_i)\to B_i} (1) \\
    = \ & \ind_{(\pi_{i,j})_*^{-1}(B_i)}^{P_{i,j}} (1) \mid 
    \ind_{(\pi_{i,j})_*^{-1}(B_i)}^{P_{i,j}} \ind_{B_j}^{(\pi_{i,j})_*^{-1}(B_i)} (1)=\ind_{B_j}^{P_{i,j}}(1)\,,
\end{align}
since $B_j\le (\pi_{i,j})_*^{-1}(B_i)$. Altogether we obtain $\chi_j\mid \ind_{P_{i,j}}^{G_j}\ind_{B_j}^{P_{i,j}}(1) = \ind_{B_j}^{G_j}(1)$ so that also $\chi_j$ is unipotent. 

\smallskip
Conversely, if $\chi_j$ is unipotent, then $\chi_j\mid \ind_{B_j}^{G_j}(1)$ and $\chi_i\mid \defl_{(\pi_{i,j})_*}\res^{G_j}_{P_{i,j}}(\chi_j)$ by (\ref{eqn rel for el ab}) and the obvious adjunctions. This implies
\begin{equation*}
  \chi_i\mid\defl_{(\pi_{i,j})_*}\res^{G_j}_{P_{i,j}} \ind_{B_j}^{G_j}(1)\,.
\end{equation*}
But, Mackey's decomposition formula yields
\begin{equation*}
  \res^{G_j}_{P_{i,j}} \ind_{B_j}^{G_j}(1) =
  \sum_{g} \ind_{P_{i,j}\cap\lexp{g}{B_j}}^{P_{i,j}}(1)
\end{equation*}
where $g$ runs over some subset of $G$. Thus, there exists $g\in G$ such that
\begin{equation*}
  \chi_i\mid \defl_{(\pi_{i,j})_*} \ind_{P_{i,j}\cap\lexp{g}{B_j}}^{P_{i,j}}(1) = 
  \isom_{\overline{(\pi_{i,j})_*}} \ind_{(P_{i,j}\cap \lexp{g}{B_j})Q_{i,j}/Q_{i,j}} ^{P_{i,j}/Q_{i,j}}(1)\,,
\end{equation*}
where $\overline{(\pi_{i,j})_*}\colon P_{i,j}/Q_{i,j}\myiso G_i$ denotes the isomorphism induced by $(\pi_{i,j})_*$ and $\isom_{\overline{(\pi_{i,j})_*}}$ denotes the corresponding isomorphism $R_K(P_{i,j}/Q_{i,j})\myiso R_K(G_i)$.  By Lemma~\ref{lem flag} below, the subgroup of $G_i$ corresponding to $(P_{i,j}\cap \lexp{g}{B_j})Q_{i,j}/Q_{i,j}$ under $\isom_{\overline{(\pi_{i,j})_*}}$ is conjugate to $B_i$. Thus, we obtain $\chi_i\mid \ind_{B_i}^{G_i}(1)$ and $\chi_i$ is unipotent.

Finally, we will show that each element in the set (\ref{eqn unipotent}) is equivalent to $(U_1,1)$. Note that, for $i=1,\ldots,n-1$, one has $(\pi_{i,i+1})_*^{-1}(B_i) = B_{i+1}$, so that (\ref{eqn inf ind}) becomes $\infl_{(\pi_{i,i+1})_*} \ind_{B_i}^{G_i}(1) = \ind_{B_{i+1}}^{P_{i,i+1}}(1)$. An easy induction argument now shows that
\begin{equation*}
  (\ind_{P_{i,i+1}}^{G_{i+1}}\infl_{(\pi_{i,i+1})_*})\circ(\ind_{P_{i-1,i}}^{G_i}\infl_{(\pi_{i-1,i})_*})\circ\cdots\circ
  (\ind_{P_{1,2}}^{G_2}\infl_{(\pi_{1,2})_*}) (1) = \ind_{B_{i+1}}^{G_{i+1}}(1)\,.
\end{equation*}
But this implies that, for each $i=1,\ldots,n-1$ and each unipotent character $\chi_{i+1}\in\Irr_K(G_{i+1})$, there exists a chain of unipotent characters $\chi_j\in\Irr_K(G_j)$, $j=2,\ldots,i$, such that $(U_1,1)\sim(U_2,\chi_2)\sim\cdots\sim(U_i,\chi_i)\sim(U_{i+1},\chi_{i+1})$. Now the proof of the claim is complete.
\end{example}

\begin{lemma}\label{lem flag}
Let $\calF\colon\{0\}=V_0\subset V_1\subset \cdots\subset V_n=V$ be a chain of subspaces in a vector space $V$ over a field $K$ with $\dim_K V_i=i$ for $i=0,\ldots,n$. Moreover, let $U$ be a subspace of $V$ and let $\overline{\cdot}\colon V\to V/U$, $v\mapsto \vbar$, denote the canonical epimorphism. Let $P$ denote the stabilizer of $U$ in $\Aut(V)$ and let $\pi\colon P\to \Aut(\Vbar)$ denote the epimorphism given by $(\pi(f))(\vbar)=\overline{f(v)}$ for $f\in P$ and $v\in V$. Then $\pi$ maps $\stab_P(\calF)$ onto $\stab_{\Aut(\Vbar)}(\calFbar)$, where $\calFbar$ denotes the chain $0=\Vbar_0\subseteq \Vbar_1\subseteq \cdots\subseteq \Vbar_n=\Vbar$ of subspaces of $\Vbar$.
\end{lemma}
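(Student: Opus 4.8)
The plan is to prove the two inclusions $\pi(\stab_P(\calF)) \subseteq \stab_{\Aut(\Vbar)}(\calFbar)$ and $\stab_{\Aut(\Vbar)}(\calFbar) \subseteq \pi(\stab_P(\calF))$ separately, the first being trivial and the second requiring a lifting argument via a compatible choice of basis. First I would observe that $\pi$ is surjective: given $\bar g \in \Aut(\Vbar)$, pick a complement $W$ of $U$ in $V$, so $\bar\cdot$ restricts to an isomorphism $W \myiso \Vbar$; transporting $\bar g$ through this isomorphism and extending by the identity on $U$ gives $g \in P$ with $\pi(g) = \bar g$. For the easy inclusion, if $f \in \stab_P(\calF)$ then $f(V_i) = V_i$ for all $i$, hence $\pi(f)(\Vbar_i) = \overline{f(V_i)} = \Vbar_i$, so $\pi(f) \in \stab_{\Aut(\Vbar)}(\calFbar)$; note $\calFbar$ is genuinely a chain of subspaces even though the inclusions need not be strict, since $\dim_K \Vbar_i$ can stay constant when $V_i$ crosses $U$.

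The substantive direction is the reverse inclusion, and the natural tool is to build a basis of $V$ adapted simultaneously to the flag $\calF$ and to the subspace $U$. Concretely, I would choose a basis $v_1,\dots,v_n$ of $V$ such that $V_i = \langle v_1,\dots,v_i\rangle$ for each $i$ and, in addition, such that the set $\{\,\vbar_j : j \in J\,\}$ is a basis of $\Vbar$, where $J := \{ j : V_j \not\subseteq V_{j-1} + U \}$ is the set of "jump indices" at which the image flag strictly increases, and $\{ v_j : j \notin J\}$ together with a spanning argument accounts for $U$. Such a basis exists: process $v_1, v_2, \dots$ in order, at each step choosing $v_i \in V_i \setminus V_{i-1}$, and whenever $V_i \subseteq V_{i-1} + U$ one may further arrange (by subtracting an element of $V_{i-1}$) that $v_i \in U$; a dimension count shows $\{v_j : j \in J\}$ maps to a basis of $\Vbar$ refining $\calFbar$. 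Now given $\bar g \in \stab_{\Aut(\Vbar)}(\calFbar)$, I would define $g$ on basis vectors by mimicking $\bar g$ on the $v_j$ with $j \in J$ and sending each $v_j$ with $j \notin J$ (which lies in $U$) to a suitable element of $U \cap V_j$ chosen so that $g(V_i) = V_i$ for all $i$; the point is that $\bar g$ preserving $\calFbar$ forces $\bar g(\vbar_j) \in \Vbar_j = \langle \vbar_k : k \in J,\, k \le j\rangle$, which lifts to a vector in $V_j$ modulo $U \cap V_j$, and the freedom in the lift plus the freedom in defining $g$ on the $U$-part lets us close up $g$ to stabilize the whole flag. Then $g \in \stab_P(\calF)$ and $\pi(g) = \bar g$ by construction.

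I expect the main obstacle to be the bookkeeping in the lifting step: one must verify that the basis adapted to both $\calF$ and $U$ really exists and that the componentwise definition of the lift $g$ can be made to respect \emph{every} $V_i$ simultaneously (not just the jump levels), using that at a non-jump level $i$ one has $V_i = V_{i-1} + (U \cap V_i)$ so that $g$ can be defined on $v_i \in U \cap V_i$ by any element of $U \cap V_i$ without disturbing the images already fixed at lower levels. Once the adapted basis is in hand this is a finite, essentially triangular, construction, so the argument is routine but must be carried out carefully; alternatively, one can phrase the whole thing more slickly by noting that $P$ acts on $V$ preserving the chain $\calF \cap U$ (intersections) and $\calF + U$ (sums), that $\pi$ carries the former information away and the latter onto $\calFbar$, and that the stabilizer of a flag in a vector space surjects onto the stabilizer of the induced flag under any equivariant quotient — but I would still want to exhibit the adapted basis to make that rigorous.
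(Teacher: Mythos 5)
Your argument is correct and is essentially the paper's proof in a repackaged form: the paper builds the lift of $\bar g$ by induction on $i$, using at each level the dichotomy $V_i\cap U=V_{i+1}\cap U$ or $V_i+U=V_{i+1}+U$ (your jump versus non-jump indices), whereas you first normalize the basis so that $v_i\in U$ at the non-jump levels and then define the triangular lift in one stroke. The adapted basis exists and the counting works out ($\{v_j: j\notin J\}$ is a basis of $U$ since $|J|=\dim\Vbar$), so the construction closes up exactly as you describe.
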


\begin{proof}
It is straightforward to check that if $f\in\stab_P(\calF)$ then $\pi(f)\in\stab_{\Aut(\Vbar)}(\calFbar)$. 

Conversely, assume that $g\in\Aut(\Vbar)$ stabilizes $\calFbar$. By induction on $i$ we will construct a sequence $f_i\in\Aut(V_i)$, $i=0,\ldots,n$, such that 
\begin{gather}
  \notag
  \text{$f_i|_{V_{i-1}}=f_{i-1}$ for $i=1,\ldots,n$,}\\ 
  \label{eqn f_i property}
  \text{$f_i(V_i\cap U) = V_i\cap U$ for $i=0,\ldots,n$, and}\\ 
  \notag
  \text{$f_i(v)+U= g(v+U)$ for $v\in V_i$ and $i=0,\ldots,n$.}
\end{gather}
To this end, let $v_i$ be an element of $V_i$ not contained in $V_{i-1}$, for $i=1,\ldots,n$. Then $v_1,\ldots,v_i$ is a basis of $V_i$ for $i=0,\ldots, n$. We start with defining $f_0$ as the zero map and assume we have already defined $f_i$ satisfying the above properties. Then we define $f_{i+1}$ as the unique extension of $f_i$ with the property that $f_{i+1}(v_{i+1})= \alpha v_{i+1} + w$ for elements $0\neq\alpha\in K$ and $w\in V_i$ that will be determined by distinction of two cases. First note that the equations
\begin{gather*}
  \dim (V_i\cap U) + \dim (V_i+U) = \dim V_i + \dim U \\
  \dim (V_{i+1}\cap U) + \dim (V_{i+1}+U) = \dim V_{i+1} + \dim U
\end{gather*}
imply that either $V_i\cap U= V_{i+1}\cap U$ or $V_i + U = V_{i+1} + U$. In the first case one has $V_{i+1}+U= K v_{i+1}\oplus (V_i+U)$  and in the second case one has $V_{i+1}\cap U = K(v_{i+1}-v)\oplus (V_i\cap U)$, where $v\in V_i$ is any element satisfying $v_{i+1}+U = v+U$. In the first case we have $g(\vbar_{i+1}) = \alpha \vbar_{i+1} + \vbar$ for some $0\neq \alpha\in K$ and $v\in V_i$, since $g$ stabilizes $\calFbar$. In this case we define $f_{i+1}(v_{i+1}):=\alpha v_{i+1} +v$. In the second case let $v\in V_i$ be such that $v_{i+1}+U=v+U$ and set $f_{i+1}(v_{i+1}):=v_{i+1}-v+f_i(v)$. It is now straightforward to show that also $f_{i+1}$ satisfies the requirements in (\ref{eqn f_i property}) for the parameter $i$ replaced with $i+1$. Finally, the automorphism $f_n$ of $V$ has the property that $f\in P$, that $f$ stabilizes $V_i$ for all $i=0,\ldots, n$, and that $\pi(f)=g$.
\end{proof}


\end{document}